\newtheorem{algorithm}{Algorithm}[section]
\newtheorem{theorem}{Theorem}[section]
\newtheorem{lemma}{Lemma}[section]
\newtheorem{corollary}{Corollary}[section]
\newtheorem{remark}{Remark}[section]
\newenvironment{proof}[1][Proof:]{\begin{trivlist} 
\item[\hskip \labelsep {\bfseries #1}]}{\end{trivlist}} 
\newcommand{\qed}{\nobreak \ifvmode \relax \else \ifdim\lastskip<1.5em \hskip-\lastskip \hskip1.5em plus0em minus0.5em \fi \nobreak \vrule height0.75em width0.5em depth0.25em\fi}
\def\R{{\bf R}}
\def\T{{\rm T}}
\def\diag{{\rm diag}}
\title{Constrained LQR Design Using Interior-Point Arc-Search Method for 
Convex Quadratic Programming with Box Constraints}
\author {Yaguang Yang\thanks{
NRC, Office of Research, 21 Church Street, Rockville, 20850. Email:
yaguang.yang@verizon.net}
}
\date{\today}
\begin{document}

\maketitle    

\begin{abstract}
Although the classical LQR design method 
has been very successful in real world engineering designs, in some cases, the classical 
design method needs modifications because of the saturation in actuators. This 
modified problem is sometimes called the constrained LQR design. For discrete systems,
the constrained LQR design problem is equivalent to a convex quadratic programming 
problem {\it with box constraints}. We will show that
the interior-point method is very efficient for this problem because an initial interior point is
available, a condition which is not true for general convex quadratic programming problem. We will
devise an effective and efficient algorithm for the constrained LQR design problem
using the special structure of the box constraints and
a recently introduced arc-search technique for the interior-point algorithm.
We will prove that the algorithm is polynomial and has the best-known complexity bound 
$O(\sqrt{n}\log(1/\epsilon))$ for the convex quadratic programming. The proposed algorithm 
is implemented in MATLAB. An example for the constrained LQR design is provided to 
show the effectiveness and efficiency of the design method. The proposed algorithm 
can easily be used for model predictive control.
\end{abstract}

{\bf Keywords:} Constrained LQR, arc-search, convex quadratic programming, polynomial algorithm.


\section{ Introduction}

The LQR design of linear control system theory has been proven to be one of the most 
effective ways to design real world control systems because, in many cases, the linear 
system model has adequate accuracy and fidelity to describe the real world systems, 
the design method has manageable complexity, and is better understood and more mature than 
the design methods in nonlinear system theory. For example, a beautiful application that 
demonstrated all these nice features is described in \cite{ssa98}. One of the major 
obstacles in some applications for the LQR design or, in general, the classical linear 
control system theory is that it does not consider the reality that the actuators have 
some saturation limits. This drawback in the linear control system theory has been 
addressed by many people and some of the latest results are presented in some recently 
published books such as \cite{hl01} \cite{zt11}. Two of the most attractive and related 
methods are the constrained LQR design \cite{bertsekas82,wright93} and the Model Predictive Control 
(MPC) design \cite{rwr98, bmdp02}, both involve solving some convex quadratic programming. 

In \cite{wright93}, a general framework of the constrained LQR problem is formulated. 
The problem can be represented as a standard convex quadratic programming with $Nr+Nm$ variables, where 
$N$ is the number of horizons, $m$ is the number of control inputs, and $r$ is the number of 
states. This problem is then solved by using two interior-point methods, The first method is similar 
to the one proposed in \cite{hpy90}, the second method is similar to the one proposed in \cite{mty93}. The analysis 
and computational experience demonstrate the same frustration that the interior-point 
algorithms with desirable theoretical properties (polynomial complexity) tend to be slow 
in computation, while little can be proven about the algorithms (such as Mehrotra Predictor
Corrector algorithm \cite{Mehrotra92, wright97}) that perform much better in practice.

MPC design is a very active research area which has many industry applications \cite{qin03}
and numerous theoretical investigations \cite{ab09}. A very attractive and important method is proposed
in \cite{bmdp02} where a general frame work of model predictive control problem is formulated.
To reduce the on-line computational burden, the off-line design strategy is emphasized. The problem
is solved by a multi-parametric program in which some convex quadratic programming will be
solved repeatedly. Further considerations on efficiency is discussed recently by many people, for
example \cite{wb10}. These papers focus on the off-line strategy but pays no attention to
the development of the best convex quadratic programming method for the MPC design.

In this paper, we will consider constrained LQR design problem in which the actuators
have lower and upper bounds. This problem is slightly simpler than the problems considered
in \cite{wright93, bmdp02, wb10} but is still general enough for most real world problems. 
We will show, by using the state space equations, that  
the number of the variables of the constrained LQR problem described in \cite{wright93}
can easily be reduced from $Nr+Nm$ to $Nm$ and all equality constraints can 
be removed. This means that the reduced problem is not only much smaller but also
has a special structure which is called convex quadratic programming subject to box 
constraints. We will solve the reduced problem by an interior-point algorithm that searches 
the optimizer along an arc that approximates the central path. This algorithm is similar to a
recently developed algorithm \cite{yang11a} but it is especially designed for 
convex quadratic programming subject only to the box constraints
and it is more efficient than the algorithm in \cite{yang11a} because of two improvements
due to the special structure: 
(1) the enlarged search neighborhood, and (2) an explicit initial interior point 
(finding an initial interior point may be a major obstacle for feasible interior-point methods). 
We will show that this algorithm has both desirable theoretical properties (polynomial 
complexity) and superior performance in computation because of the above mentioned improvements. 
Although the idea of the proof of polynomiality is similar to that used in \cite{yang11a}, 
we provided the proof in the appendix for several reasons: (1) it also shows 
the enlarged search neighborhood comparing to the linearly constrained convex quadratic 
programming, (2) some special cares are needed for box constraints, and (3) the completeness.
We have implemented the algorithm in MATLAB. 
We will demonstrate by some numerical LQR design example that the proposed constrained LQR 
design is very effective and efficient. 

The remainder of the paper is organized as follows. Section 2 introduces notations and some
technical lemmas that will be used in the rest of the paper. Section 3 discusses the constrained
LQR design and convex quadratic programming with box constraints. Section 4 describes the
central path of quadratic programming with box constraints. Section 5 proposes an arc-search
algorithm for the convex quadratic programming with box constraints. Section 6 gives convergence
analysis. Section 7 addresses implementation issues. Section 8 presents some LQR design example.
Section 9 summarizes the conclusions. Some technical proofs are in the appendix to enhance
the readability of the paper.

\section{Some Notations and Technical Lemmas}

Throughout the paper, we will use notations adopted in \cite{yang11a}. 
We denote n-dimensional vector space by $\R^n$, $n \times m$-dimensional
matrix space by $\R^{n \times m}$, Hadamard (element-wise) product of two 
vectors $y \in \R^n$ and $\lambda \in \R^n$ by $y \circ \lambda$, the $i$th component 
of $y$ by $y_i$, element-wise division of the two vectors by $\frac{y}{\lambda}$
if $\min | \lambda_i | >0$, the Euclidean norm of $y$ by $\| y \|$, the identity 
matrix of any dimension by $I$, the vector of all ones with appropriate dimension by $e$, 
element-wise absolute value vector by $|y|=[ |y_1|, \ldots, |y_n|]^{\T}$. 
To simplify the notation for block column vectors, we will denote, for example, 
$[y^{\T}, \lambda^{\T}]^{\T}$ by $(y, \lambda)$. For vectors $x \in \R^n$, $y \in \R^n$, 
$z \in \R^n$, $\lambda \in \R^n$, and $\gamma \in \R^n$, we will use  capital letters $X$, $Y$, 
$Z$, $\Lambda$, and $\Gamma$ for some related diagonal matrices whose diagonal elements are 
the components of the corresponding vectors. For example, we will use $\Lambda=\diag (\lambda)$ 
and $\Gamma = \diag(\gamma)$ for the diagonal matrices. For a matrix $H \in \R^{n \times n}$, 
we use $H \ge 0$ if $H$ is positive semidefinite, and $H>0$ if $H$ is positive definite.
Finally, we define an initial point of any algorithm by $x^0$, the point after the 
$k$th iteration by $x^k$.

We will also use some technical lemmas which are independent of the problem.
The first two simple lemmas are given in \cite{yang09}.

\begin{lemma}
Let $p>0$, $q>0$, and $r>0$ be some constants. If $p+q \le r$, then $pq \le \frac{r^2}{4}$.
\label{simple}
\end{lemma}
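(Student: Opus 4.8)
The plan is to reduce everything to the elementary inequality $(p-q)^2 \ge 0$, which is the classical arithmetic--geometric mean trick specialized to two numbers. First I would expand $(p-q)^2 = p^2 - 2pq + q^2 \ge 0$ and rearrange to obtain $4pq \le p^2 + 2pq + q^2 = (p+q)^2$. This step uses nothing about the signs of $p$ and $q$; positivity is only needed to make the final division meaningful and to keep the statement nonvacuous.

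Next I would invoke the hypothesis $p + q \le r$. Since $p + q > 0$ and $r > 0$, squaring is monotone on the nonnegative reals, so $(p+q)^2 \le r^2$. Chaining this with the bound from the previous paragraph gives $4pq \le (p+q)^2 \le r^2$, and dividing by $4$ yields $pq \le \frac{r^2}{4}$, which is the claim.

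There is essentially no obstacle here: the only point requiring a word of care is the monotonicity of squaring, which is valid precisely because both $p+q$ and $r$ are positive (a hypothesis we are given), so no absolute values or case distinctions are needed. I would state the two displayed inequalities $4pq \le (p+q)^2$ and $(p+q)^2 \le r^2$ and then conclude in one line.
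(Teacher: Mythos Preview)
Your argument is correct: the AM--GM step $4pq \le (p+q)^2$ followed by the monotonicity of squaring on nonnegatives gives the result cleanly. The paper does not actually prove this lemma---it simply cites \cite{yang09}---so there is no in-paper proof to compare against; your approach is the standard one and would be exactly what is expected.
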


\begin{lemma} For $\alpha \in [0, \frac{\pi}{2}]$,
\[
\sin(\alpha) \ge \sin^2(\alpha) = 1- \cos^2(\alpha)\ge 1-\cos(\alpha).
\]
\label{sincos}
\end{lemma}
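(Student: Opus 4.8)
The plan is to reduce every link in the displayed chain to the single elementary fact that $t^{2}\le t$ whenever $t\in[0,1]$. The first thing I would record is that for $\alpha\in[0,\tfrac{\pi}{2}]$ both $\sin\alpha$ and $\cos\alpha$ lie in $[0,1]$: $\sin$ increases from $0$ to $1$ and $\cos$ decreases from $1$ to $0$ on this interval. This range restriction is the only hypothesis that matters, and it is exactly what makes all the sign considerations below work.

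For the left inequality $\sin\alpha\ge\sin^{2}\alpha$, I would put $t=\sin\alpha\in[0,1]$ and write $\sin^{2}\alpha=t\cdot t\le t\cdot 1=\sin\alpha$, using $0\le t\le 1$. The middle equality $\sin^{2}\alpha=1-\cos^{2}\alpha$ is nothing but the Pythagorean identity $\sin^{2}\alpha+\cos^{2}\alpha=1$, so there is nothing to prove there. For the right inequality $1-\cos^{2}\alpha\ge 1-\cos\alpha$, I would rearrange it to the equivalent statement $\cos\alpha\ge\cos^{2}\alpha$, i.e. $\cos\alpha\,(1-\cos\alpha)\ge 0$; since $\cos\alpha\in[0,1]$, both factors are nonnegative and the product is nonnegative. (Equivalently, apply $t^{2}\le t$ once more, now with $t=\cos\alpha$.)

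There is essentially no obstacle here: the statement is a convenient elementary packaging of the bounds $\sin\alpha,\cos\alpha\in[0,1]$, and the only point requiring any care is to keep track of the interval $[0,\tfrac{\pi}{2}]$, since the inequalities genuinely fail outside it (for instance the nonnegativity of $\cos\alpha$ breaks on $(\tfrac{\pi}{2},\pi]$). I would therefore present the argument in two or three lines, emphasizing where $\alpha\in[0,\tfrac{\pi}{2}]$ is invoked, and move on.
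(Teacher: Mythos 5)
Your proof is correct. The paper does not actually prove this lemma---it simply cites an external reference (\cite{yang09}) for it---and your argument (reducing each inequality to $t^{2}\le t$ for $t\in[0,1]$ with $t=\sin\alpha$ and $t=\cos\alpha$, plus the Pythagorean identity for the middle equality) is the standard and essentially unique elementary proof, with the role of the interval $[0,\tfrac{\pi}{2}]$ correctly identified.
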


The following Lemma is proved in \cite{ma89a}.
\begin{lemma}
Let $u$, $v$, and $w$ be real vectors of same size satisfying 
$u+v=w$ and $u^{\T}v \ge 0$. Then,
\begin{equation}
2 \| u \| \cdot \|v\| \le \|u\|^2+\|v\|^2 \le \|u\|^2+\|v\|^2+
2u^{\T}v =\|u+v\|^2 = \|w\|^2.
\label{ma89}
\end{equation}
\label{ineq}
\end{lemma}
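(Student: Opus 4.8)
The plan is to establish the displayed chain of relations one link at a time, reading from left to right. For the leftmost inequality $2\|u\|\cdot\|v\| \le \|u\|^2 + \|v\|^2$, I would simply note that it is the scalar arithmetic--geometric mean inequality applied to the nonnegative reals $\|u\|$ and $\|v\|$; equivalently, it is nothing but the expansion of $(\|u\| - \|v\|)^2 \ge 0$. (One could also invoke Lemma~\ref{simple}, but the direct observation is shorter and self-contained.)

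Next, the middle inequality $\|u\|^2 + \|v\|^2 \le \|u\|^2 + \|v\|^2 + 2u^{\T}v$ is obtained by adding the quantity $2u^{\T}v$ to both sides and using the hypothesis $u^{\T}v \ge 0$; this is the only place that hypothesis enters. The subsequent equality is the standard expansion of the squared Euclidean norm by bilinearity of the inner product, namely $\|u+v\|^2 = (u+v)^{\T}(u+v) = u^{\T}u + 2u^{\T}v + v^{\T}v = \|u\|^2 + \|v\|^2 + 2u^{\T}v$. Finally, substituting the remaining hypothesis $u+v=w$ yields $\|u+v\|^2 = \|w\|^2$, closing the chain.

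As for the main obstacle: there genuinely is none --- every step is elementary, and the lemma is phrased as a single chain precisely so that the two consequences $2\|u\|\cdot\|v\| \le \|w\|^2$ and $\|u\|^2+\|v\|^2 \le \|w\|^2$ can later be quoted verbatim. The only point deserving attention is that $u^{\T}v \ge 0$ is exactly what makes the middle link (and hence the bound $\|u\|^2+\|v\|^2 \le \|w\|^2$) valid; dropping it would break the inequality chain even though the two equalities at the right end would still hold.
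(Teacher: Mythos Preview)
Your proof is correct. The paper itself does not prove this lemma; it simply attributes the result to Monteiro and Adler \cite{ma89a} and states it without argument, so your elementary step-by-step verification is in fact more than what the paper provides.
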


The next technical lemma is from \cite[page 88]{wright97}.
\begin{lemma}
Let $u$ and $v$ be the vectors of the same dimension, and $u^{\T}v \ge 0$. Then
\[
\| u \circ v \| \le 2^{-\frac{3}{2}} \| u+v \|^2.
\]
\label{wright0}
\end{lemma}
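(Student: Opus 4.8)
The plan is to split the index set according to the sign of the products $u_i v_i$ and to bound the two resulting blocks separately, invoking the hypothesis $u^{\T}v \ge 0$ only to dominate the "bad" block by the "good" one. First I would set $\mathcal{P} = \{ i : u_i v_i \ge 0 \}$ and $\mathcal{M} = \{ i : u_i v_i < 0 \}$ and write $\| u \circ v \|^2 = \sum_{i \in \mathcal{P}} (u_i v_i)^2 + \sum_{i \in \mathcal{M}} (u_i v_i)^2$. The elementary fact that for nonnegative reals $a_i$ one has $\sum_i a_i^2 \le \left( \sum_i a_i \right)^2$ (i.e. $\| a \|_2 \le \| a \|_1$), applied to each block, gives $\sum_{i \in \mathcal{P}} (u_i v_i)^2 \le \left( \sum_{i \in \mathcal{P}} u_i v_i \right)^2$ and $\sum_{i \in \mathcal{M}} (u_i v_i)^2 \le \left( \sum_{i \in \mathcal{M}} |u_i v_i| \right)^2$.

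Next, the sign hypothesis enters exactly once: $0 \le u^{\T} v = \sum_{i \in \mathcal{P}} u_i v_i - \sum_{i \in \mathcal{M}} |u_i v_i|$, so, writing $s := \sum_{i \in \mathcal{P}} u_i v_i \ge 0$, we get $\sum_{i \in \mathcal{M}} |u_i v_i| \le s$. Combined with the previous step this yields $\| u \circ v \|^2 \le s^2 + s^2 = 2 s^2$, hence $\| u \circ v \| \le \sqrt{2}\, s$. Then I would bound $s$ itself: for $i \in \mathcal{P}$ the entries $u_i$ and $v_i$ have the same sign, so $(u_i + v_i)^2 - 4 u_i v_i = (u_i - v_i)^2 \ge 0$ (equivalently, this is Lemma~\ref{simple} applied to $|u_i|, |v_i|$ with $r = |u_i| + |v_i| = |u_i + v_i|$), giving $u_i v_i \le \tfrac14 (u_i + v_i)^2$. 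Summing over $\mathcal{P}$ and extending the sum to all indices, $s \le \tfrac14 \sum_{i \in \mathcal{P}} (u_i + v_i)^2 \le \tfrac14 \| u + v \|^2$. Chaining the two estimates gives $\| u \circ v \| \le \tfrac{\sqrt{2}}{4} \| u + v \|^2 = 2^{-3/2} \| u + v \|^2$, as claimed.

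There is no deep obstacle here; the only point requiring care is recognizing that the passage from the Euclidean norm to the $\ell_1$ norm must be carried out \emph{block by block} on $\mathcal{P}$ and $\mathcal{M}$ — doing it globally would cost a factor and destroy the sharp constant — and that the hypothesis $u^{\T}v\ge 0$ should be spent solely on the comparison $\sum_{\mathcal{M}} |u_i v_i| \le \sum_{\mathcal{P}} u_i v_i$. The constant $2^{-3/2}$ is in fact attained (take $u = (1,-1)$, $v = (1,1)$, so that $u^{\T}v = 0$, $\| u \circ v \| = \sqrt 2$, and $\| u + v \|^2 = 4$), which confirms that none of the inequalities above can be relaxed and that the bookkeeping of the two blocks together with the $\tfrac14$ from the arithmetic–geometric step is exactly what is needed.
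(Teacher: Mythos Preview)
Your proof is correct. The paper does not give its own proof of this lemma but cites \cite[p.~88]{wright97}, and your argument---splitting into the index sets $\mathcal{P}$ and $\mathcal{M}$, using $\|\cdot\|_2\le\|\cdot\|_1$ on each block, spending the hypothesis $u^{\T}v\ge 0$ to dominate the $\mathcal{M}$-block by the $\mathcal{P}$-block, and then applying the AM--GM step $u_iv_i\le\tfrac14(u_i+v_i)^2$ on $\mathcal{P}$---is exactly the standard proof found there.
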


We will use the famous Cardano's formula which can be found in \cite{poly07}.
\begin{lemma}
Let $p$ and $q$ be the real numbers that are related to the following cubic algebra equation 
\[
x^3 + p x +q =0.
\] 
If 
\[
\Delta =\left( \frac{q}{2} \right)^2 + \left( \frac{p}{3} \right)^3 >0,
\]
then the cubic equation has one real root that is given by
\[
x=\sqrt[3]{-\frac{q}{2}+\sqrt{\left( \frac{q}{2} \right)^2 + \left( \frac{p}{3} \right)^3}}
+\sqrt[3]{-\frac{q}{2}-\sqrt{\left( \frac{q}{2} \right)^2 + \left( \frac{p}{3} \right)^3}}.
\]
\label{cubic}
\end{lemma}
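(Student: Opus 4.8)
The plan is to use the classical Cardano substitution $x = u + v$ to reduce the cubic to a quadratic in $u^3$. Expanding, $x^3 = (u+v)^3 = u^3 + v^3 + 3uv(u+v) = u^3 + v^3 + 3uvx$, so the equation $x^3 + px + q = 0$ becomes
\[
u^3 + v^3 + q + (3uv + p)x = 0.
\]
Since writing $x = u+v$ carries one redundant degree of freedom, I would impose the extra constraint $3uv + p = 0$, i.e. $uv = -p/3$. With this, the displayed identity collapses to $u^3 + v^3 = -q$, while cubing the constraint gives $u^3 v^3 = -p^3/27$.

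Next I would recognize $u^3$ and $v^3$ as the two roots of the quadratic $t^2 - (u^3 + v^3)t + u^3 v^3 = t^2 + qt - p^3/27 = 0$. Its discriminant is $q^2 + 4p^3/27 = 4\bigl((q/2)^2 + (p/3)^3\bigr) = 4\Delta$, which is positive by hypothesis, so by the quadratic formula the roots are real:
\[
t = -\frac{q}{2} \pm \sqrt{\left(\frac{q}{2}\right)^2 + \left(\frac{p}{3}\right)^3}.
\]
I would then let $u$ and $v$ be the (real) cube roots of $-q/2 + \sqrt{\Delta}$ and $-q/2 - \sqrt{\Delta}$ respectively, which yields exactly the claimed expression for $x = u + v$.

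The one point that genuinely needs care — and which I expect to be the main (if small) obstacle — is checking that these two particular real cube roots are \emph{compatible}, i.e. that their product actually equals $-p/3$ rather than $\omega(-p/3)$ for some nontrivial cube root of unity $\omega$. This holds because $u^3 v^3 = \bigl(-q/2\bigr)^2 - \Delta = -(p/3)^3$ is a real number, so $uv$ is a real number whose cube is $-(p/3)^3$; since a real number has a unique real cube root, $uv = -p/3$, and hence $3uv + p = 0$ as required. Substituting back into the collapsed identity then confirms $u^3 + v^3 + q = 0$, so $x = u + v$ indeed satisfies $x^3 + px + q = 0$.

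Finally, I would close with a brief remark justifying the uniqueness phrasing in the statement: when $\Delta > 0$ the quadratic in $t$ has two distinct real roots, so $u^3 \neq v^3$; the other two roots of the cubic are $\omega u + \omega^2 v$ and $\omega^2 u + \omega v$ with $\omega$ a primitive cube root of unity, and since $u$ and $v$ are unequal real numbers these are non-real complex conjugates. Hence the formula above gives the cubic's only real root.
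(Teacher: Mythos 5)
Your proof is correct. Note that the paper itself offers no proof of this lemma at all: it is stated as the classical Cardano formula and simply cited to the handbook \cite{poly07}, so there is no argument in the paper to compare yours against. Your derivation is the standard one (substitute $x=u+v$, impose $3uv+p=0$, solve the resulting quadratic in $u^3$), and you correctly handle the one genuinely delicate point, namely that the real cube roots of $-q/2\pm\sqrt{\Delta}$ are compatible, i.e.\ that $uv=-p/3$ rather than a rotation of it by a nontrivial cube root of unity; your argument that $(uv)^3=-(p/3)^3$ with $uv$ real forces $uv=-p/3$ is exactly right. Your closing remark that the other two roots $\omega u+\omega^2 v$ and $\omega^2 u+\omega v$ are non-real when $\Delta>0$ (since then $u\neq v$) also correctly justifies the "one real root" phrasing of the statement.
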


For quartic polynomials, the roots can also be represented by formulae,
we do not list all the possible cases and solutions but refer to \cite{evans94} for the
detailed discussion. The last technical lemma in this section is as follows.

\begin{lemma}
Let $u$ and $v$ be the $n$-dimensional vectors. Then
\[
\Bigl\lVert  u \circ v -\frac{1}{n} \left( u^{\T} v \right) e \Bigr\rVert 
\le  \Bigl\lVert  u \circ v \Bigr\rVert .
\]
\label{circCom}
\end{lemma}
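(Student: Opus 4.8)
The plan is to recognize this as the elementary fact that orthogonal projection (equivalently, subtracting off the mean) never increases the Euclidean norm. First I would set $w = u \circ v \in \R^n$ and observe that $u^{\T} v = \sum_{i=1}^n u_i v_i = e^{\T} w$, so the quantity on the left-hand side is exactly $\lVert w - \frac{1}{n}(e^{\T} w) e \rVert$. Since $\lVert e \rVert^2 = e^{\T} e = n$, the vector $\frac{1}{n}(e^{\T} w) e$ is precisely the orthogonal projection of $w$ onto the one-dimensional subspace spanned by $e$.

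Next I would verify the orthogonal decomposition $w = \bigl( w - \frac{1}{n}(e^{\T} w) e \bigr) + \frac{1}{n}(e^{\T} w) e$ is in fact orthogonal, by computing the inner product of the two summands:
\[
\Bigl( w - \tfrac{1}{n}(e^{\T} w) e \Bigr)^{\T} \Bigl( \tfrac{1}{n}(e^{\T} w) e \Bigr)
= \tfrac{1}{n}(e^{\T} w)\Bigl( e^{\T} w - \tfrac{1}{n}(e^{\T} w)(e^{\T} e) \Bigr) = 0,
\]
using $e^{\T} e = n$. Then the Pythagorean identity gives
\[
\lVert w \rVert^2 = \Bigl\lVert w - \tfrac{1}{n}(e^{\T} w) e \Bigr\rVert^2 + \tfrac{1}{n}(e^{\T} w)^2 \ge \Bigl\lVert w - \tfrac{1}{n}(e^{\T} w) e \Bigr\rVert^2,
\]
and taking square roots yields the claimed inequality.

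There is essentially no hard part here; the only thing to be careful about is correctly identifying $u^{\T} v$ with $e^{\T}(u\circ v)$ and keeping the normalization $\lVert e\rVert^2 = n$ straight, which is what makes $\frac{1}{n}(u^{\T}v)e$ the genuine projection rather than an arbitrary multiple of $e$. An alternative, purely computational route would be to expand $\lVert w - \frac{1}{n}(e^{\T}w)e\rVert^2 = \lVert w\rVert^2 - \frac{2}{n}(e^{\T}w)^2 + \frac{1}{n^2}(e^{\T}w)^2 \lVert e\rVert^2 = \lVert w\rVert^2 - \frac{1}{n}(e^{\T}w)^2$ directly, which avoids even mentioning projections; I would likely present this shorter version since it is self-contained and needs no appeal to the Pythagorean theorem.
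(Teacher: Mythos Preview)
Your proof is correct; the paper itself omits the proof of this lemma entirely, so there is nothing to compare against. The observation that $u^{\T}v = e^{\T}(u\circ v)$ and the direct expansion $\lVert w - \tfrac{1}{n}(e^{\T}w)e\rVert^2 = \lVert w\rVert^2 - \tfrac{1}{n}(e^{\T}w)^2$ is exactly the right way to fill the gap.
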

\begin{proof} Omitted.
\end{proof}

\section{Constrained LQR and Convex QP with Box Constraints}

We will consider the following constrained LQR (or MPC) design problem. Let ${\bf x} \in \R^{r}$ be
the system state, and ${\bf u} \in \R^{m}$ be the control vector. The discrete linear
time-invariant system is given by 
\begin{eqnarray}
{\bf x}_{s+1}=A{\bf x}_s+B{\bf u}_s, 
\label{state}
\end{eqnarray}
while fulfilling the constraints
\begin{eqnarray}
-e \le {\bf u}_{s} \le e,
\label{saturation}
\end{eqnarray}
where $s=t, \ldots, t+N-1$. Let $P$, $Q$, and $R$ be positive definite matrices.
The design is to optimize the following cost function
\begin{eqnarray}
J=\min_{{\bf u}_t, {\bf u}_{t+1}, \cdots, {\bf u}_{t+N-1}} 
\frac{1}{2} {\bf x}_{t+N}^{\T}P{\bf x}_{t+N} + \frac{1}{2} \sum_{k=0}^{N-1}
\left[ 
{\bf x}_{t+k}^{\T}Q{\bf x}_{t+k}+{\bf u}_{t+k}^{\T}R{\bf u}_{t+k}
\right]
\label{obj}
\end{eqnarray}
under the system dynamics equality constraints (\ref{state}) and control saturation 
inequality constraints (\ref{saturation}). Given current state ${\bf x}_t$, this 
LQR (or MPC) design problem is a typical convex quadratic programming problems with $Nr+Nm$ variables
${\bf x}_{t+1}, \cdots, {\bf x}_{t+N}$, ${\bf u}_t, \cdots, {\bf u}_{t+N-1}$. Though this 
problem can be directly solved as suggested by \cite{bertsekas82, wright93}, it can be significantly reduced 
to an equivalent but much smaller convex quadratic programming problem subject only to box constraints. 
Denote 
\[ A^k=\underbrace{A \cdots A}_{\mbox{product of k A}}:=A_k \] 
with $A_0=I$. Since
\begin{eqnarray}
{\bf x}_{t+k}=A{\bf x}_{t+k-1}+B{\bf u}_{t+k-1}=A^k{\bf x}_t+\sum_{j=0}^{k-1}A^{j}B{\bf u}_{t+k-j-1}=A_k{\bf x}_t+\sum_{j=0}^{k-1}A_{j}B{\bf u}_{t+k-j-1},
\label{state1}
\end{eqnarray}
(\ref{obj}) can be rewritten as
\begin{eqnarray}
J &  = & \min_{{\bf u}_t, {\bf u}_{t+1}, \cdots, {\bf u}_{t+N-1}} 
\frac{1}{2} \left( A_N{\bf x}_t+\sum_{j=0}^{N-1}A_{j}B{\bf u}_{t+N-j-1} \right)^{\T}P
\left( A_N{\bf x}_t+\sum_{j=0}^{N-1}A_{j}B{\bf u}_{t+N-j-1} \right)
\nonumber \\
&  + & \frac{1}{2}  \sum_{k=1}^{N-1}
\left( A_k{\bf x}_t+\sum_{j=0}^{k-1}A_{j}B{\bf u}_{t+k-j-1} \right)^{\T}Q
\left( A_k{\bf x}_t+\sum_{j=0}^{k-1}A_{j}B{\bf u}_{t+k-j-1} \right)
+ \frac{1}{2} \sum_{k=0}^{N-1} \left( {\bf u}_{t+k}^{\T}R{\bf u}_{t+k} \right)
\label{obj1}
\end{eqnarray}
Notice that ${\bf x}_t$ is a constant vector, $A_j$, $P$, $Q$, and $R$ are constant matrices, the cost function
(\ref{obj1}) can be reduced to
\begin{eqnarray}
J_0 &  = & \min_{{\bf u}_t, {\bf u}_{t+1}, \cdots, {\bf u}_{t+N-1}} 
\frac{1}{2} \left( \sum_{j=0}^{N-1}A_{j}B{\bf u}_{t+N-j-1} \right)^{\T}P
\left( \sum_{j=0}^{N-1}A_{j}B{\bf u}_{t+N-j-1} \right)
\nonumber \\
& + & (A_N{\bf x}_t)^{\T} P \left( \sum_{j=0}^{N-1}A_{j}B{\bf u}_{t+N-j-1} \right) 
\nonumber \\
&  + & \frac{1}{2}  \sum_{k=1}^{N-1}
\left( \sum_{j=0}^{k-1}A_{j}B{\bf u}_{t+k-j-1} \right)^{\T}Q
\left( \sum_{j=0}^{k-1}A_{j}B{\bf u}_{t+k-j-1} \right)
\nonumber \\
&  + & \sum_{k=1}^{N-1} \left( (A_k{\bf x}_t)^{\T} Q\left( \sum_{j=0}^{k-1}A_{j}B{\bf u}_{t+k-j-1} \right)\right) 
\nonumber \\
& + & \frac{1}{2} \sum_{k=0}^{N-1} \left( {\bf u}_{t+k}^{\T}R{\bf u}_{t+k} \right).
\label{obj2}
\end{eqnarray}
Denote
\begin{eqnarray}
\sum_{j=0}^{k-1}A_{j}B{\bf u}_{t+k-j-1} = \underbrace{\left[ A_{k-1}B, A_{k-2}B, \cdots, B \right]}_{\phi_k}
\underbrace{ 
\left[ \begin{array}{c} {\bf u}_t \\ \vdots \\ {\bf u}_{t+k-1} \end{array} \right] 
}_{v_k}
={\phi_k}{v_k},
\label{phiv}
\end{eqnarray}
\begin{eqnarray}
Q_k = \left[ \begin{array}{cc} \phi_k^{\T} Q \phi_k & 0 \\ 0  & 0 \end{array} \right],
\label{qk}
\end{eqnarray}
\begin{eqnarray}
R_N = \underbrace{
\left[ \begin{array}{ccc} R & \cdots & 0 \\ \vdots  & \ddots & \vdots \\ 0 & \cdots & R \end{array} \right]
}_{\mbox{N diagonal matrices}},
\label{rk}
\end{eqnarray}
and 
\begin{eqnarray}
S_k = \left[ \begin{array}{cc} A_k^{\T}Q\phi_k & 0 \end{array} \right],
\label{sk}
\end{eqnarray}
where $0$ are zero matrices with appropriate dimensions.
The constrained LQR (or MPC) design is reduced further to 
\begin{eqnarray}
J_0 &  =  & \min_{{\bf u}_t, {\bf u}_{t+1}, \cdots, {\bf u}_{t+N-1}}
\frac{1}{2}  v_N^{\T} \left( \phi_N^{\T} P \phi_N+\sum_{k=1}^{N-1}Q_k  +R_N \right)v_N 
+ {\bf x}_t^{\T} \left( A_N^{\T}P\phi_N+\sum_{k=1}^{N-1}S_k \right)v_N  \nonumber \\
&   & s.t. \hspace{0.2in} -e \le v_N \le e.
\label{lqr}
\end{eqnarray}
This is a convex quadratic programming problem with $Nm$ variables and $2Nm$ box constraints, a much
smaller and simpler problem than the original one. Let $n=Nm$,
\begin{equation}
x=v_N,
\end{equation}
\begin{equation}
H= \left( \phi_N^{\T} P \phi_N+\sum_{k=1}^{N-1}Q_k  +R_N \right),
\end{equation}
\begin{equation}
c^{\T}={\bf x}_t^{\T} \left( A_N^{\T}P\phi_N+\sum_{k=1}^{N-1}S_k \right).
\end{equation}
The LQR (or MPC) design problem can be written in a standard form of convex quadratic problem
with box constraints:
\begin{eqnarray}
(QP) & \min \hspace{0.05in} \frac{1}{2} x^{\T}Hx + c^{\T}x, 
\hspace{0.15in} \mbox{\rm subject to} 
\hspace{0.1in}  -e \le x \le e,
\label{QP}
\end{eqnarray}
where $0 < H \in {\bf R}^{n \times n}$ is a  
positive definite matrix, $c \in {\bf R}^{n}$ is given, and 
$x \in {\bf R}^{n}$ is the control vector to be optimized. 
The remaining discussion of this paper is focused on the solution to the 
convex quadratic programming problem with box constraints described by (\ref{QP}).

\section{Central Path of Convex QP with Box Constraints}


It is well known that $x$ is an optimal solution 
of (\ref{QP}) if and only if $x$, $\lambda$, and $\gamma$ meet the 
following KKT conditions
\begin{subequations}
\begin{align}
-\lambda+\gamma-Hx=c, \\
-e \le x  \le e,  \\
(\lambda, \gamma) \ge 0,  \\
\lambda_i (e_i-x_i) = 0, \hspace{0.1in} \gamma_i (e_i+x_i) = 0, \hspace{0.1in} i=1,\ldots,n.
\end{align}
\label{ifonlyif}
\end{subequations}
Denote $y=e-x \ge 0$, $z=e+x \ge 0$. The KKT condition can be rewritten as
\begin{subequations}
\begin{align}
Hx+c+\lambda-\gamma=0, \label{kkta} \\
x+y=e, \hspace{0.1in} x-z=-e,  \label{kktb} \\
(y, z, \lambda, \gamma) \ge 0,  \label{kktc} \\
\lambda_i y_i = 0, \hspace{0.1in} \gamma_i z_i = 0, \hspace{0.1in} i=1,\ldots,n. \label{kkt-d} 
\end{align}
\label{kkt}
\end{subequations}
For the convex (QP) problem, the KKT condition is also sufficient for $x$ to be
a global optimal solution. Denote the feasible set ${\cal F}$  as a collection of all points that meet 
the constraints (\ref{kkta}), (\ref{kktb}), (\ref{kktc})
\begin{equation}
{\cal F}=\lbrace (x, y, z, \lambda, \gamma): \hspace{0.01in} Hx+c+\lambda-\gamma=0, 
\hspace{0.01in} ( y, z, \lambda, \gamma) \ge 0,
\hspace{0.01in}  x+y=e, x-z=-e \rbrace,
\label{feasible}
\end{equation}
and the strictly feasible set ${\cal F}^o$ as a collection of all points that 
meet the constraints (\ref{kkta}), (\ref{kktb}), and are strictly positive in (\ref{kktc})
\begin{equation}
{\cal F}^o=\lbrace (x, y, z, \lambda, \gamma): \hspace{0.01in} Hx+c+\lambda-\gamma=0, 
\hspace{0.01in} (y, z, \lambda, \gamma) > 0,
\hspace{0.01in}  x+y=e, x-z=-e  \rbrace.
\label{sFeasible}
\end{equation}

Similar to the linear programming, we define the central path ${\cal C} \in {\cal F}^o \subset {\cal F}$, 
as a curve in finite dimensional space parameterized by a scalar $\tau > 0$ as follows. For each interior 
point $(x, y, z, \lambda, \gamma) \in {\cal F}^o$ on the central path, there is a $\tau >0$ such that
\begin{subequations}
\begin{align}
Hx+c+\lambda-\gamma=0, \label{cena} \\
x+y=e, \hspace{0.1in} x-z=-e, \label{cenb}  \\
(y, z, \lambda, \gamma) > 0, \label{cenc}  \\
\lambda_i y_i = \tau, \hspace{0.1in} \gamma_i z_i = \tau, \hspace{0.1in} i=1,\ldots,n.
\end{align}
\label{centralpath}
\end{subequations}
\noindent
Therefore, the central path is an arc that is parameterized as a function of $\tau$ and is denoted as 
\begin{equation}
{\cal C} = \lbrace( x(\tau), y(\tau), z(\tau), \lambda(\tau), \gamma(\tau)): \tau>0 \rbrace.
\end{equation} 
As $\tau \rightarrow 0$, the moving point $( x(\tau), y(\tau), z(\tau), \lambda(\tau), \gamma(\tau))$ 
on the central path represented by (\ref{centralpath}) approaches the solution of (QP) 
represented by (\ref{QP}). Throughout the paper, we make the following assumption. 
\newline
\newline
{\bf Assumption:}
\begin{itemize}
\item[1.] ${\cal F}^o$ is not empty.
\end{itemize}
Assumption 1 implies the existence of a central path. This assumption is always true for the LQR problem, and we
will provide an explicit initial interior point in Section 7.

\vspace{0.1in}
Let $1 > \theta >0$, denote $p=(y,z)$, $\omega =(\lambda, \gamma)$, and the duality gap
\begin{equation}
\mu=\frac{\lambda^{\T}y+\gamma^{\T}z}{2n}=\frac{p^{\T} \omega}{2n}.
\label{mu}
\end{equation}
We define a set of neighborhood of the central path as
\begin{equation}
{\cal N}_2(\theta)=\lbrace(x, y, z, \lambda, \gamma) \in {\cal F}^o: 
\| p \circ \omega -\mu e\| \le \theta \mu, \rbrace \subset {\cal F}^o.
\label{n2}
\end{equation}
As we reduce the duality gap to zero, the neighborhood of ${\cal N}_2(\theta)$ will be
a neighborhood of the central path that approaches the optimizer(s) of the QP problem, 
therefore, all points inside ${\cal N}_2(\theta)$ will approach
the optimizer(s) of the QP problem.
For $(x, y, z, \lambda, \gamma) \in {\cal N}_2(\theta)$, since
$(1-\theta)\mu \le \omega_i p_i \le (1+\theta)\mu$, where $\omega_i$ are either $\lambda_i$
or $\gamma_i$, and $p_i$ are either $y_i$ or $z_i$, we have
\begin{equation}
\frac{ \omega_i p_i }{1+\theta} \le
\frac{\max_i \omega_i p_i }{1+\theta} \le \mu  \le
\frac{\min_i \omega_i p_i }{1-\theta} \le
\frac{\omega_i p_i }{1-\theta}.
\label{umax1}
\end{equation}

\section{An Arc-search Algorithm for Convex QP with Box Constraints} 

The idea of arc-search proposed in this paper is very simple. The 
algorithm starts from a feasible point in ${\cal N}_2(\theta)$ close 
to the central path, constructs an arc that passes through 
the point and approximates the central path, searches along the arc to 
a new point in a larger area ${\cal N}_2(2\theta)$ that 
reduces the duality gap $p^{\T}\omega$ and meets (\ref{centralpath}a), 
(\ref{centralpath}b), and (\ref{centralpath}c). The process is repeated
by finding a better point close to the central path or on the central path 
in ${\cal N}_2(\theta)$ that simultaneously meets
(\ref{centralpath}a), (\ref{centralpath}b), and (\ref{centralpath}c). 

Following the idea used in \cite{yang11a}, we will use an ellipse ${\cal E}$ 
\cite{carmo76} in an appropriate dimensional space to approximate the central path 
${\cal C}$ described by (\ref{centralpath}), where
\begin{equation}
{\cal E}=\lbrace (x(\alpha), y(\alpha), z(\alpha), \lambda(\alpha), \gamma(\alpha)): 
(x(\alpha), y(\alpha), z(\alpha), \lambda(\alpha), \gamma(\alpha))=
\vec{a}\cos(\alpha)+\vec{b}\sin(\alpha)+\vec{c} \rbrace,
\label{ellipse}
\end{equation}
$\vec{a} \in \R^{5n}$ and $\vec{b} \in \R^{5n}$ are the axes of the ellipse, 
$\vec{c} \in \R^{5n}$ is the center of the ellipse. Given a point $(x, y, z, \lambda, \gamma)=
(x(\alpha_0), y(\alpha_0), z(\alpha_0), \lambda(\alpha_0), \gamma(\alpha_0)) \in {\cal E}$ 
which is close to or on the central path, 
$\vec{a}$, $\vec{b}$, $\vec{c}$ are functions of $\alpha$, $(x, \lambda, \gamma, y, z)$, 
$(\dot{x},\dot{y}, \dot{z}, \dot{\lambda}, \dot{\gamma})$, and 
$(\ddot{x},\ddot{y}, \ddot{z}, \ddot{\lambda}, \ddot{\gamma})$, where 
$(\dot{x},\dot{y}, \dot{z}, \dot{\lambda}, \dot{\gamma})$ and 
$(\ddot{x},\ddot{y}, \ddot{z}, \ddot{\lambda}, \ddot{\gamma})$ are defined as

\begin{equation}
\left[
\begin{array}{ccccc}
H  & 0 & 0 & I & -I \\
I & I & 0 & 0 & 0  \\
I & 0 & -I & 0 & 0 \\
0 & \Lambda & 0 & Y & 0 \\
0 & 0 & \Gamma & 0 & Z
\end{array}
\right]
\left[
\begin{array}{c}
\dot{x} \\ \dot{y} \\ \dot{z} \\ \dot{\lambda}  \\ \dot{\gamma}
\end{array}
\right]
=\left[
\begin{array}{c}
0 \\ 0 \\ 0 \\ \lambda \circ y \\ \gamma \circ z
\end{array}
\right],
\label{doty}
\end{equation}

\begin{equation}
\left[
\begin{array}{ccccc}
H  & 0 & 0 & I & -I \\
I & I & 0 & 0 & 0  \\
I & 0 & -I & 0 & 0 \\
0 & \Lambda & 0 & Y & 0 \\
0 & 0 & \Gamma & 0 & Z
\end{array}
\right]
\left[
\begin{array}{c}
\ddot{x} \\ \ddot{y} \\ \ddot{z} \\ \ddot{\lambda}  \\ \ddot{\gamma}
\end{array}
\right]
=\left[
\begin{array}{c}
0 \\ 0 \\ 0 \\ -2\dot{\lambda} \circ \dot{y} \\ -2\dot{\gamma} \circ \dot{z}
\end{array}
\right].
\label{ddoty}
\end{equation}
The first rows of (\ref{doty}) and (\ref{ddoty}) are equivalent to
\begin{equation}
H\dot{x}=\dot{\gamma} - \dot{\lambda}, \hspace{0.2in} H\ddot{x}=\ddot{\gamma} - \ddot{\lambda}.
\label{Hdx}
\end{equation}
The next 2 rows of (\ref{doty}) and (\ref{ddoty}) are equivalent to
\begin{equation}
\dot{x}=-\dot{y}, \hspace{0.2in} \dot{x}=\dot{z}, \hspace{0.2in} 
\ddot{x}=-\ddot{y}, \hspace{0.2in} \ddot{x}=\ddot{z}.
\label{xeqyz}
\end{equation}
The last 2 rows of (\ref{doty}) and (\ref{ddoty}) are equivalent to
\begin{equation}
p \circ \dot{\omega} + \dot{p} \circ \omega = p \circ \omega,
\label{pdk}
\end{equation}
\begin{equation}
{p} \circ \ddot{\omega} + \ddot{p} \circ {\omega} = -2 \dot{p} \circ \dot{\omega}.
\label{pddk}
\end{equation}

It has been shown in \cite{yang09} that one can avoid the calculation of
$\vec{a}$, $\vec{b}$, and $\vec{c}$ in the expression of the ellipse. The
following formulas are used instead.
\begin{theorem}
Let $(x(\alpha),y(\alpha), z(\alpha), \lambda(\alpha),\gamma(\alpha))$ be an arc defined by 
(\ref{ellipse}) passing through a point $(x,y,z,\lambda,\gamma) \in {\cal E}$, and its first 
and second derivatives at $(x,y,z,\lambda,\gamma)$ be $(\dot{x}, \dot{y}, \dot{z}, \dot{\lambda}, \dot{\gamma})$ 
and $(\ddot{x}, \ddot{y}, \ddot{z}, \ddot{\lambda}, \ddot{\gamma})$ which are defined by
(\ref{doty}) and (\ref{ddoty}). Then an ellipse approximation of the central path is given by
\begin{equation}
x(\alpha) = x - \dot{x}\sin(\alpha)+\ddot{x}(1-\cos(\alpha)),
\label{updatedX}
\end{equation}
\begin{equation}
y(\alpha) = y - \dot{y}\sin(\alpha)+\ddot{y}(1-\cos(\alpha)),
\label{updatedY}
\end{equation}
\begin{equation}
z(\alpha) = z - \dot{z}\sin(\alpha)+\ddot{z}(1-\cos(\alpha)),
\label{updatedZ}
\end{equation}
\begin{equation}
\lambda(\alpha) = \lambda-\dot{\lambda}\sin(\alpha)+\ddot{\lambda}(1-\cos(\alpha)),
\label{updatedL}
\end{equation}
\begin{equation}
\gamma(\alpha) = \gamma - \dot{\gamma}\sin(\alpha)+\ddot{\gamma}(1-\cos(\alpha)).
\label{updatedG}
\end{equation}
\label{thm1}
\end{theorem}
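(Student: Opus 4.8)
The plan is to treat this as a direct interpolation computation: determine the ellipse axes $\vec{a}$, $\vec{b}$ and the center $\vec{c}$ in (\ref{ellipse}) from the base point together with the first and second derivatives prescribed by (\ref{doty})--(\ref{ddoty}), then substitute them back and simplify with elementary trigonometric identities.

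First I would use the invariance of the family (\ref{ellipse}) under a parameter shift $\alpha \mapsto \alpha + \text{const}$, which only rotates $(\vec{a},\vec{b})$ among themselves; hence, without loss of generality, the base point $(x,y,z,\lambda,\gamma)$ corresponds to $\alpha = 0$. Writing $w(\alpha) = (x(\alpha),y(\alpha),z(\alpha),\lambda(\alpha),\gamma(\alpha)) = \vec{a}\cos(\alpha) + \vec{b}\sin(\alpha) + \vec{c}$, differentiation gives $w'(\alpha) = -\vec{a}\sin(\alpha) + \vec{b}\cos(\alpha)$ and $w''(\alpha) = -\vec{a}\cos(\alpha) - \vec{b}\sin(\alpha)$, so at $\alpha = 0$ one obtains the three block relations $w(0) = \vec{a} + \vec{c}$, $w'(0) = \vec{b}$, and $w''(0) = -\vec{a}$. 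Under the sign convention by which $(\dot{x},\dot{y},\dot{z},\dot{\lambda},\dot{\gamma}) = -w'(0)$ and $(\ddot{x},\ddot{y},\ddot{z},\ddot{\lambda},\ddot{\gamma}) = w''(0)$ --- the convention that makes (\ref{doty})--(\ref{ddoty}) the first- and second-order conditions for the central path at the base point --- these relations solve uniquely to $\vec{b} = -(\dot{x},\dot{y},\dot{z},\dot{\lambda},\dot{\gamma})$, $\vec{a} = -(\ddot{x},\ddot{y},\ddot{z},\ddot{\lambda},\ddot{\gamma})$, and $\vec{c} = w(0) + (\ddot{x},\ddot{y},\ddot{z},\ddot{\lambda},\ddot{\gamma})$.

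Substituting these back into (\ref{ellipse}) collapses the right-hand side to $w(0) - (\dot{x},\dot{y},\dot{z},\dot{\lambda},\dot{\gamma})\sin(\alpha) + (\ddot{x},\ddot{y},\ddot{z},\ddot{\lambda},\ddot{\gamma})(1-\cos(\alpha))$, which read componentwise is precisely (\ref{updatedX})--(\ref{updatedG}). For completeness I would also record that at any strictly feasible point the coefficient matrix shared by (\ref{doty}) and (\ref{ddoty}) is nonsingular: eliminating $\dot{y}, \dot{z}, \dot{\lambda}, \dot{\gamma}$ from a kernel vector reduces the first block row to $(H + Y^{-1}\Lambda + Z^{-1}\Gamma)\dot{x} = 0$, and since $H \ge 0$ while $Y^{-1}\Lambda$ and $Z^{-1}\Gamma$ are positive diagonal matrices, $\dot{x} = 0$ and the whole kernel vector vanishes; so $(\dot{x},\dot{y},\dot{z},\dot{\lambda},\dot{\gamma})$, $(\ddot{x},\ddot{y},\ddot{z},\ddot{\lambda},\ddot{\gamma})$, and hence the ellipse, are well defined.

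I expect no real obstacle here; the only place that needs care is fixing the sign and normalization conventions --- confirming that the vectors produced by (\ref{doty})--(\ref{ddoty}) correspond to $-w'(0)$ and $w''(0)$ rather than to $w'(0)$ and $-w''(0)$ --- and checking that the reduction to $\alpha_0 = 0$ via the parameter shift is legitimate. Once these are settled, the theorem is the one-line substitution above, parallel to the derivation in \cite{yang09} cited just before the statement.
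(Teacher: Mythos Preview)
Your proposal is correct and is exactly the standard computation one would do; the paper itself gives no proof at all for this theorem, simply citing \cite{yang09} and placing a \qed\ symbol. Your argument---shift the base parameter to $\alpha=0$, read off $\vec{a}$, $\vec{b}$, $\vec{c}$ from $w(0)$, $w'(0)$, $w''(0)$, then substitute back---is the natural one, and your added remark on nonsingularity of the coefficient matrix in (\ref{doty})--(\ref{ddoty}) is a useful completeness check that the paper omits here. The only delicate point, which you already flag, is the sign convention: the vectors produced by (\ref{doty})--(\ref{ddoty}) correspond to $-w'(0)$ and $w''(0)$ (equivalently, the first and second derivatives with respect to the reversed parameter $-\alpha$), and once that is fixed the formulas (\ref{updatedX})--(\ref{updatedG}) drop out immediately.
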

\hfill \qed

We will also use a compact format for $p(\alpha)=\left( y(\alpha), z(\alpha) \right)$ and 
$\omega(\alpha)=\left( \lambda(\alpha), \gamma(\alpha)  \right)$, which are given by
\begin{equation}
p(\alpha)=p-\dot{p}\sin(\alpha)+\ddot{p}(1-\cos(\alpha)),
\label{palpha}
\end{equation}
\begin{equation}
\omega(\alpha)=\omega-\dot{\omega}\sin(\alpha)+\ddot{\omega}(1-\cos(\alpha)).
\label{kalpha}
\end{equation}
We denote the duality gap at point $(x(\alpha), p(\alpha), \omega(\alpha))$ as
\begin{equation}
\mu(\alpha) = \frac{\lambda(\alpha)^{\T}y(\alpha)+\gamma(\alpha)^{\T} z(\alpha)}{2n}
=\frac{p(\alpha )^{\T} \omega (\alpha )}{2n}.
\label{dualalpha}
\end{equation}
Assuming $(y, z,\lambda, \gamma) > 0$, one can easily see that if 
$\frac{\dot{y}}{y}$, $\frac{\dot{z}}{z}$, $\frac{\dot{\lambda}}{\lambda}$, 
$\frac{\dot{\gamma}}{\gamma}$, $\frac{\ddot{y}}{y}$, $\frac{\ddot{z}}{z}$, 
$\frac{\ddot{\lambda}}{\lambda}$, $\frac{\ddot{\gamma}}{\gamma}$
are bounded (we will show that this is true), and if $\alpha$ is small 
enough, then $y(\alpha)>0$, $z(\alpha)>0$, $\lambda(\alpha)>0$, and $\gamma(\alpha)>0$. 
We will also show that searching along this ellipse will reduce the duality gap, i.e.,  
$\mu(\alpha) < \mu$. 

\begin{lemma}
Let $(x,y,z,\lambda,\gamma)$ be a strictly feasible point of (QP), 
$(\dot{x}, \dot{y}, \dot{z}, \dot{\lambda}, \dot{\gamma})$ 
and $(\ddot{x}, \ddot{y}, \ddot{z}, \ddot{\lambda}, \ddot{\gamma})$ meet (\ref{doty}) and 
(\ref{ddoty}), $(x(\alpha),y(\alpha),z(\alpha),\lambda(\alpha),\gamma(\alpha))$ be calculated using 
(\ref{updatedX}), (\ref{updatedY}), (\ref{updatedZ}), (\ref{updatedL}), and (\ref{updatedG}), then the 
following conditions hold.
\[ x(\alpha)+y(\alpha)=e, \hspace{0.1in}  x(\alpha)-z(\alpha)=-e, \hspace{0.1in}  
Hx(\alpha)+c+\lambda(\alpha)+\gamma(\alpha)=0.\]
\label{eqCondition}
\end{lemma}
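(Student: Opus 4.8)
The plan is to verify each of the three identities by direct substitution of the arc formulas (\ref{updatedX})--(\ref{updatedG}), using only that the base point $(x,y,z,\lambda,\gamma)$ lies in ${\cal F}^o$ (hence satisfies the affine relations in the definition (\ref{sFeasible})) together with the reduced forms (\ref{Hdx}) and (\ref{xeqyz}) of the defining linear systems (\ref{doty}) and (\ref{ddoty}). No estimates are involved; everything is an exact algebraic cancellation in which the $\sin(\alpha)$-coefficient and the $(1-\cos(\alpha))$-coefficient vanish separately, because each of the first- and second-derivative vectors individually solves a homogeneous version of the affine block of the defining systems.

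First, for the two equality constraints: adding (\ref{updatedX}) and (\ref{updatedY}) gives
\[
x(\alpha)+y(\alpha) = (x+y) - (\dot{x}+\dot{y})\sin(\alpha) + (\ddot{x}+\ddot{y})(1-\cos(\alpha)).
\]
By (\ref{xeqyz}) we have $\dot{x}+\dot{y}=0$ and $\ddot{x}+\ddot{y}=0$, and since the base point is strictly feasible $x+y=e$; hence $x(\alpha)+y(\alpha)=e$ for every $\alpha$. In the same way, subtracting (\ref{updatedZ}) from (\ref{updatedX}) and using $\dot{x}-\dot{z}=0$, $\ddot{x}-\ddot{z}=0$ from (\ref{xeqyz}) together with $x-z=-e$ yields $x(\alpha)-z(\alpha)=-e$.

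For the third identity, multiply (\ref{updatedX}) by $H$ on the left and add $c+\lambda(\alpha)-\gamma(\alpha)$, substituting (\ref{updatedL}) and (\ref{updatedG}):
\[
Hx(\alpha)+c+\lambda(\alpha)-\gamma(\alpha) = (Hx+c+\lambda-\gamma) - (H\dot{x}+\dot{\lambda}-\dot{\gamma})\sin(\alpha) + (H\ddot{x}+\ddot{\lambda}-\ddot{\gamma})(1-\cos(\alpha)).
\]
The first parenthesized term vanishes because the base point satisfies (\ref{kkta}); the second and third vanish by (\ref{Hdx}), which gives $H\dot{x}=\dot{\gamma}-\dot{\lambda}$ and $H\ddot{x}=\ddot{\gamma}-\ddot{\lambda}$. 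Hence $Hx(\alpha)+c+\lambda(\alpha)-\gamma(\alpha)=0$ for all $\alpha$; I read the ``$+\gamma(\alpha)$'' printed in the statement as a typo for ``$-\gamma(\alpha)$'', consistent with (\ref{kkta}) and (\ref{cena}).

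There is no genuinely hard step here: the entire content is that the arc (\ref{ellipse}) was built from two search directions each of which respects the affine structure, so the affine constraints are preserved exactly. The only things to watch are the signs in (\ref{xeqyz}) and (\ref{Hdx}) when expanding, and the observation that the coefficients of $\sin(\alpha)$ and of $1-\cos(\alpha)$ must be annihilated independently (which they are, since $\alpha$ is free). If one prefers, the same conclusion follows at once from the ellipse representation (\ref{ellipse}) itself: the vectors $\vec a$, $\vec b$ lie in the nullspace of the linear map $(x,y,z,\lambda,\gamma)\mapsto(Hx+\lambda-\gamma,\ x+y,\ x-z)$ while $\vec c$ maps to $(-c,e,-e)$, so every point of ${\cal E}$ satisfies the affine relations.
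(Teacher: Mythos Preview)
Your argument is correct and is precisely the ``direct calculation'' the paper invokes: you use strict feasibility (\ref{sFeasible}) of the base point together with the homogeneous first rows of (\ref{doty}) and (\ref{ddoty}) (equivalently (\ref{Hdx}) and (\ref{xeqyz})) and the arc formulas of Theorem \ref{thm1} to cancel the $\sin(\alpha)$ and $1-\cos(\alpha)$ terms. Your observation that the displayed ``$+\gamma(\alpha)$'' should read ``$-\gamma(\alpha)$'' is also correct, matching (\ref{kkta}) and (\ref{cena}).
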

\begin{proof}
Since $(x,y,z,\lambda,\gamma)$ is a strictly feasible point, the result follows from 
direct calculation by using (\ref{sFeasible}), (\ref{doty}), (\ref{ddoty}), 
and Theorem \ref{thm1}.  
\hfill \qed
\end{proof}

\begin{lemma}
Let $(\dot{x}, \dot{p}, \dot{\omega})$ be defined by (\ref{doty}), $(\ddot{x}, \ddot{p}, \ddot{\omega})$ 
be defined by (\ref{ddoty}), and $H$ be positive definite matrix. Then the following relations hold.
\begin{equation}
\dot{p}^{\T} \dot{\omega}=\dot{x}^{\T} (\dot{\gamma}-\dot{\lambda}) =\dot{x}^{\T} H \dot{x}\ge 0,
\label{p1}
\end{equation}
The equality holds if and only if $\| \dot{x} \|=0$.
\begin{equation}
\ddot{p}^{\T} \ddot{\omega}=\ddot{x}^{\T} (\ddot{\gamma}-\ddot{\lambda}) =\ddot{x}^{\T} H \ddot{x}\ge 0,
\label{p2}
\end{equation}
The equality holds if and only if $\| \ddot{x} \|=0$.
\begin{equation}
\ddot{p}^{\T} \dot{\omega}=\ddot{x}^{\T} (\dot{\gamma}-\dot{\lambda}) 
= \dot{x}^{\T} (\ddot{\gamma}-\ddot{\lambda})=\ddot{p}^{\T} \ddot{\omega}
= \dot{x}^{\T} H  \ddot{x}.
\label{p3}
\end{equation}
\begin{eqnarray} \nonumber
& &
-(\dot{x}^{\T} H \dot{x})(1-\cos(\alpha))^2-(\ddot{x}^{\T} H \ddot{x})\sin^2(\alpha) 
\nonumber \\ 
& \le & (\ddot{x}^{\T} (\dot{\gamma}-\dot{\lambda}) +\dot{x}^{\T} (\ddot{\gamma}-\ddot{\lambda})) 
\sin(\alpha)(1-\cos(\alpha)) \nonumber \\
& \le & (\dot{x}^{\T} H \dot{x})(1-\cos(\alpha))^2+(\ddot{x}^{\T} H \ddot{x})\sin^2(\alpha).
\label{quad}
\end{eqnarray}
\begin{eqnarray} \nonumber
& &
-(\dot{x}^{\T} H\dot{x})\sin^2(\alpha)-(\ddot{x}^{\T} H \ddot{x}) (1-\cos(\alpha))^2
\nonumber \\ 
& \le & (\ddot{x}^{\T} (\dot{\gamma}-\dot{\lambda}) +\dot{x}^{\T} (\ddot{\gamma}-\ddot{\lambda})) 
\sin(\alpha)(1-\cos(\alpha)) \nonumber \\
& \le & (\dot{x}^{\T} H \dot{x})\sin^2(\alpha)+(\ddot{x}^{\T} H \ddot{x}) (1-\cos(\alpha))^2.
\label{quad1}
\end{eqnarray}
For $\alpha = \frac{\pi}{2}$, (\ref{quad}) and (\ref{quad1}) reduce to
\begin{equation}
-\left( \dot{x}^{\T} H \dot{x}+\ddot{x}^{\T} H \ddot{x} \right)
\le (\ddot{x}^{\T} H \dot{x} +\dot{x}^{\T} H \ddot{x}) \le 
\dot{x}^{\T} H \dot{x}+\ddot{x}^{\T} H \ddot{x}.
\label{aeq1}
\end{equation}
\label{positive}
\end{lemma}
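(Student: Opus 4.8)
The plan is to establish the identities (\ref{p1})--(\ref{p3}) first, since they are pure linear-algebra consequences of the defining systems (\ref{doty}) and (\ref{ddoty}), and then to derive the four displayed inequalities (\ref{quad}), (\ref{quad1}), (\ref{aeq1}) from them by packaging the relevant vectors into a sum $u+v=w$ with $u^{\T}v\ge 0$ and invoking Lemma \ref{ineq} (the Mangasarian--type inequality of \cite{ma89a}). I would organize the writeup in that order.

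First I would prove (\ref{p1}): from the block equations, the last two rows give $p\circ\dot\omega+\dot p\circ\omega = p\circ\omega$, but more usefully the structure $\dot p=-\dot x$ componentwise in the $y$-part and $\dot p = \dot x$ in the $z$-part (that is, $\dot y=-\dot x$, $\dot z=\dot x$) together with $H\dot x = \dot\gamma-\dot\lambda$ from (\ref{Hdx}) gives
\[
\dot p^{\T}\dot\omega = \dot y^{\T}\dot\lambda + \dot z^{\T}\dot\gamma
= -\dot x^{\T}\dot\lambda + \dot x^{\T}\dot\gamma
= \dot x^{\T}(\dot\gamma-\dot\lambda) = \dot x^{\T}H\dot x \ge 0,
\]
with equality iff $\dot x=0$ since $H>0$. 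The proof of (\ref{p2}) is identical with dots replaced by double dots, using $H\ddot x=\ddot\gamma-\ddot\lambda$ and $\ddot y=-\ddot x$, $\ddot z=\ddot x$ from (\ref{xeqyz}). For (\ref{p3}) I would compute $\ddot p^{\T}\dot\omega = \ddot y^{\T}\dot\lambda+\ddot z^{\T}\dot\gamma = \ddot x^{\T}(\dot\gamma-\dot\lambda)=\ddot x^{\T}H\dot x$, and symmetrically $\dot p^{\T}\ddot\omega = \dot x^{\T}(\ddot\gamma-\ddot\lambda)=\dot x^{\T}H\ddot x$; since $H$ is symmetric these two scalars coincide, giving the chain in (\ref{p3}). (The equality $\ddot p^{\T}\dot\omega=\ddot p^{\T}\ddot\omega$ also follows because $\ddot p^{\T}\ddot\omega=\ddot x^{\T}H\ddot x$ and one checks $\ddot x^{\T}H\dot x=\ddot x^{\T}H\ddot x$ is NOT generally true, so I should be careful: the intended reading is that all the listed expressions equal $\dot x^{\T}H\ddot x$, and I would double-check the exact grouping against (\ref{pddk}) before committing.)

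For the inequalities, set $u = \dot x\,(1-\cos\alpha)$ and $v = \ddot x\,\sin\alpha$ (for (\ref{quad1}) swap the roles, $u=\dot x\sin\alpha$, $v=\ddot x(1-\cos\alpha)$). Then $u^{\T}Hv = \dot x^{\T}H\ddot x\,\sin\alpha(1-\cos\alpha)$, which by (\ref{p3}) equals $\tfrac12(\ddot x^{\T}(\dot\gamma-\dot\lambda)+\dot x^{\T}(\ddot\gamma-\ddot\lambda))\sin\alpha(1-\cos\alpha)$ times $2$; applying the elementary bound $2|u^{\T}Hv|\le u^{\T}Hu+v^{\T}Hv$ (valid since $H>0$, e.g. from Lemma \ref{ineq} applied to $H^{1/2}u$ and $H^{1/2}v$, or a direct completion-of-squares argument) yields exactly the two-sided bound (\ref{quad}), and the swapped choice gives (\ref{quad1}). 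Specializing $\alpha=\pi/2$ makes $\sin^2\alpha=(1-\cos\alpha)^2=1$, collapsing both to (\ref{aeq1}).

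I expect the main obstacle to be bookkeeping rather than genuine difficulty: getting every sign right in the reductions $\dot y=-\dot x$, $\dot z=\dot x$ and tracking which of $\dot\gamma-\dot\lambda$ versus $\dot\lambda-\dot\gamma$ appears, and confirming that the precise grouping of equalities claimed in (\ref{p3}) matches what (\ref{doty})--(\ref{ddoty}) actually force. Once (\ref{p3}) is pinned down, the inequalities are a mechanical application of $2|a^{\T}Hb|\le a^{\T}Ha+b^{\T}Hb$ with the two natural pairings of $(\dot x,\ddot x)$ against $(1-\cos\alpha,\sin\alpha)$, so I would present those last three displays as a short corollary of the identities with minimal extra prose.
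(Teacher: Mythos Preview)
Your approach matches the paper's exactly: the identities (\ref{p1})--(\ref{p3}) come straight from (\ref{xeqyz}) and (\ref{Hdx}), and the inequalities from expanding $(a\pm b)^{\T}H(a\pm b)\ge 0$ with $a=\dot x(1-\cos\alpha)$, $b=\ddot x\sin\alpha$ (and the swapped pairing for (\ref{quad1})), which is precisely your completion-of-squares bound $2|a^{\T}Hb|\le a^{\T}Ha+b^{\T}Hb$. Your caution about (\ref{p3}) is justified---the fourth term $\ddot p^{\T}\ddot\omega$ in the displayed chain is a typo for $\dot p^{\T}\ddot\omega$, as the paper's own proof makes clear---and you should cite the direct expansion $(a\pm b)^{\T}H(a\pm b)\ge 0$ rather than Lemma~\ref{ineq}, which carries an unneeded sign hypothesis $u^{\T}v\ge 0$.
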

\begin{proof}
See Appendix A.
\end{proof}

Using Lemmas \ref{positive}, \ref{simple}, and \ref{ineq}, we can show that
$\frac{{\dot{p}}}{{p}}:= \left( \frac{\dot{y}}{y}, \frac{\dot{z}}{z} \right)$, 
$\frac{\dot{\omega}}{\omega} := \left( \frac{\dot{\lambda}}{\lambda}, \frac{\dot{\gamma}}{\gamma} \right)$, 
$\frac{{\ddot{p}}}{{p}}:= \left( \frac{\ddot{y}}{y}, \frac{\ddot{z}}{z} \right)$ and 
$\frac{\ddot{\omega}}{\omega} := \left( \frac{\ddot{\lambda}}{\lambda}, \frac{\ddot{\gamma}}{\gamma} \right)$
are all bounded as claimed in the following two Lemmas.

\begin{lemma} 
Let $(x,p,\omega)=(x,y,z,\lambda,\gamma) \in {\cal N}_2(\theta)$ and 
$(\dot{x}, \dot{p}, \dot{\omega})=(\dot{x},\dot{y},\dot{z},\dot{\lambda}, \dot{\gamma})$ 
meet (\ref{doty}). Then,
\begin{equation}
\Bigl\lVert \frac{{\dot{p}}}{{p}} \Bigr\rVert^2 +
\Bigl\lVert \frac{\dot{\omega}}{\omega} \Bigr\rVert^2 \le \frac{2n}{1-\theta},
\label{sumEq}
\end{equation}
\begin{equation}
\Bigl\lVert \frac{{\dot{p}}}{{p}} \Bigr\rVert^2
\Bigl\lVert \frac{\dot{\omega}}{\omega} \Bigr\rVert^2
\le \left( \frac{n}{1-\theta} \right)^2,
\label{circNorm0}
\end{equation}
\begin{equation}
0 \le \frac{\dot{p}^{\T} \dot{\omega} } {\mu}
\le \frac{1+\theta}{1-\theta}n:=\delta_1n.
\label{circNorm}
\end{equation}
\label{size}
\end{lemma}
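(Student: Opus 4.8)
The plan is to reduce all three bounds to the single rescaled form of (\ref{pdk}) and then invoke Lemmas \ref{ineq} and \ref{simple}, using the sign condition $\dot{p}^{\T}\dot{\omega}\ge 0$ supplied by Lemma \ref{positive}. First I would rescale (\ref{pdk}) componentwise. Since $(x,p,\omega)\in{\cal N}_2(\theta)\subset{\cal F}^o$, all $p_i>0$ and $\omega_i>0$, so dividing the $i$th component of $p\circ\dot{\omega}+\dot{p}\circ\omega=p\circ\omega$ by $\sqrt{p_i\omega_i}$ and setting
\[
u=\sqrt{\tfrac{\omega}{p}}\circ\dot{p},\qquad v=\sqrt{\tfrac{p}{\omega}}\circ\dot{\omega},\qquad w=\sqrt{p\circ\omega},
\]
gives $u+v=w$. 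Because $u_iv_i=\dot{p}_i\dot{\omega}_i$, equation (\ref{p1}) of Lemma \ref{positive} yields $u^{\T}v=\dot{p}^{\T}\dot{\omega}=\dot{x}^{\T}H\dot{x}\ge 0$, so Lemma \ref{ineq} applies and gives
\[
\|u\|^2+\|v\|^2\;\le\;\|w\|^2\;=\;\sum_{i=1}^{n}p_i\omega_i\;=\;p^{\T}\omega\;=\;2n\mu .
\]

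Next I would translate this back into bounds on $\dot{p}/p$ and $\dot{\omega}/\omega$. From $(x,p,\omega)\in{\cal N}_2(\theta)$ we have $p_i\omega_i\ge(1-\theta)\mu$ (this is the left half of (\ref{umax1})), hence componentwise
\[
\Bigl(\tfrac{\dot{p}_i}{p_i}\Bigr)^2=\frac{u_i^2}{p_i\omega_i}\le\frac{u_i^2}{(1-\theta)\mu},\qquad
\Bigl(\tfrac{\dot{\omega}_i}{\omega_i}\Bigr)^2=\frac{v_i^2}{p_i\omega_i}\le\frac{v_i^2}{(1-\theta)\mu}.
\]
Summing over $i$ and using the displayed bound on $\|u\|^2+\|v\|^2$,
\[
\Bigl\lVert\frac{\dot{p}}{p}\Bigr\rVert^2+\Bigl\lVert\frac{\dot{\omega}}{\omega}\Bigr\rVert^2
\le\frac{\|u\|^2+\|v\|^2}{(1-\theta)\mu}\le\frac{2n\mu}{(1-\theta)\mu}=\frac{2n}{1-\theta},
\]
which is (\ref{sumEq}). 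Then Lemma \ref{simple}, applied to the two nonnegative quantities $\lVert\dot{p}/p\rVert^2$ and $\lVert\dot{\omega}/\omega\rVert^2$ whose sum is at most $2n/(1-\theta)$, gives $\lVert\dot{p}/p\rVert^2\,\lVert\dot{\omega}/\omega\rVert^2\le\tfrac14\bigl(2n/(1-\theta)\bigr)^2=\bigl(n/(1-\theta)\bigr)^2$, which is (\ref{circNorm0}).

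For (\ref{circNorm}) the lower bound is immediate from (\ref{p1}): $\dot{p}^{\T}\dot{\omega}=\dot{x}^{\T}H\dot{x}\ge 0$. For the upper bound, $\dot{p}^{\T}\dot{\omega}=u^{\T}v\le\tfrac12(\|u\|^2+\|v\|^2)\le\tfrac12\|w\|^2=n\mu$, so $\dot{p}^{\T}\dot{\omega}/\mu\le n\le\frac{1+\theta}{1-\theta}n=\delta_1 n$ since $0<\theta<1$. I do not anticipate any genuine difficulty; the only steps needing care are the componentwise rescaling that produces a triple $(u,v,w)$ of the form required by Lemma \ref{ineq}, and the correct use of the neighborhood inequality $p_i\omega_i\ge(1-\theta)\mu$ to pass from the rescaled vectors $u,v$ back to the quantities $\dot{p}/p$ and $\dot{\omega}/\omega$ that actually appear in the statement.
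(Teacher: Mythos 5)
Your proof is correct and, for the first two inequalities, is essentially the paper's own argument: the same componentwise rescaling of (\ref{pdk}) into $u+v=w$ with $u_i v_i=\dot p_i\dot\omega_i$, the same appeal to Lemma \ref{positive} for $u^{\T}v\ge 0$, Lemma \ref{ineq} for $\|u\|^2+\|v\|^2\le\|w\|^2=2n\mu$, the left half of (\ref{umax1}) to convert back to $\dot p/p$ and $\dot\omega/\omega$, and Lemma \ref{simple} for (\ref{circNorm0}). (Minor slip: the sums over components of $p$ and $\omega$ run to $2n$, not $n$, but this does not affect $\|w\|^2=p^{\T}\omega=2n\mu$.) For the third inequality (\ref{circNorm}) you deviate from the paper in a worthwhile way: the paper bounds $\dot p^{\T}\dot\omega/\mu$ via Cauchy--Schwarz on $|\dot p|^{\T}|\dot\omega|$ together with the right half of (\ref{umax1}) and the already-established norm bounds, arriving at $\frac{1+\theta}{1-\theta}n$; you instead observe directly that $\dot p^{\T}\dot\omega=u^{\T}v\le\tfrac12(\|u\|^2+\|v\|^2)\le\tfrac12\|w\|^2=n\mu$, which is both shorter and sharper (it gives the bound $n$ rather than $\frac{1+\theta}{1-\theta}n$, and trivially implies the stated inequality since $\theta\in(0,1)$). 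Either route is valid; the paper's choice of the looser constant $\delta_1 n$ is presumably for uniformity with the analogous bounds in Lemma \ref{restSize}, where the Cauchy--Schwarz route is the one that generalizes.
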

\begin{proof}
See Appendix A.
\end{proof}

\begin{lemma}
Let $(x,p,\omega)=(x,y,z,\lambda,\gamma) \in {\cal N}_2(\theta)$, 
$(\dot{x}, \dot{y},\dot{z},\dot{\lambda}, \dot{\gamma})$ and 
$(\ddot{x}, \ddot{y},\ddot{z},\ddot{\lambda}, \ddot{\gamma})$ 
meet (\ref{doty}) and (\ref{ddoty}). Then
\begin{equation}
\Bigl\lVert \frac{\ddot{p}}{p} \Bigr\rVert^2
+ \Bigl\lVert \frac{\ddot{\omega}}{\omega} \Bigr\rVert^2
\le \frac{4(1+\theta)n^2}{(1-\theta)^3},
\label{ddNorm}
\end{equation}
\begin{equation}
\Bigl\lVert \frac{\ddot{p}}{p} \Bigr\rVert^2
\Bigl\lVert \frac{\ddot{\omega}}{\omega} \Bigr\rVert^2 
\le \left( \frac{2(1+\theta)n^2}{(1-\theta)^3} \right)^2,
\end{equation}
\begin{equation}
0 \le \frac{\ddot{p}^{\T} \ddot{\omega} } {\mu} 
\le \frac{2(1+\theta)^2}{(1-\theta)^3}n^2:=\delta_2n^2,
\label{circ1}
\end{equation}
\begin{equation}
\Big\lvert \frac{\dot{p}^{\T} \ddot{\omega} } {\mu} \Big\rvert
\le \frac{(2n(1+\theta))^{\frac{3}{2}}}{(1-\theta)^2}:=\delta_3n^{\frac{3}{2}},
\hspace{0.15in}
\Big\lvert \frac{\ddot{p}^{\T} \dot{\omega} } {\mu} \Big\rvert
\le \frac{(2n(1+\theta))^{\frac{3}{2}}}{(1-\theta)^2}:=\delta_3n^{\frac{3}{2}}.
\label{circ2}
\end{equation}
\label{restSize}
\end{lemma}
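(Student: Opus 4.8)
The plan is to convert the defining relation (\ref{pddk}) for the second derivatives into a symmetrized scalar identity to which Lemma~\ref{ineq} applies, and then to control its right-hand side using the first-derivative estimates of Lemma~\ref{size} together with the two-sided bounds (\ref{umax1}) valid in ${\cal N}_2(\theta)$.

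First I would rescale (\ref{pddk}) componentwise by $(p\circ\omega)^{-1/2}$. Letting $u$ be the vector with entries $u_i=\sqrt{\omega_i/p_i}\,\ddot p_i$, $v$ the vector with entries $v_i=\sqrt{p_i/\omega_i}\,\ddot\omega_i$, and $w$ the vector with entries $w_i=-2\dot p_i\dot\omega_i/\sqrt{p_i\omega_i}$, the identity (\ref{pddk}) becomes $u+v=w$. The key point is that this particular scaling leaves the cross term invariant, $u^{\T}v=\sum_i\ddot p_i\ddot\omega_i=\ddot p^{\T}\ddot\omega$, which by (\ref{p2}) equals $\ddot x^{\T}H\ddot x\ge 0$ because $H\ge 0$. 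Hence Lemma~\ref{ineq} yields $\|u\|^2+\|v\|^2\le\|w\|^2$.

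Next I would bound $\|w\|^2$. Since $w_i^2=4\,p_i\omega_i\,(\dot p_i/p_i)^2(\dot\omega_i/\omega_i)^2$ and $p_i\omega_i\le(1+\theta)\mu$ by (\ref{umax1}), summing over $i$ gives $\|w\|^2\le 4(1+\theta)\mu\,\|(\dot p/p)\circ(\dot\omega/\omega)\|^2\le 4(1+\theta)\mu\,\|\dot p/p\|^2\,\|\dot\omega/\omega\|^2$, and (\ref{circNorm0}) then gives $\|w\|^2\le 4(1+\theta)n^2\mu/(1-\theta)^2$. On the other side, $p_i\omega_i\ge(1-\theta)\mu$ gives $\|u\|^2\ge(1-\theta)\mu\,\|\ddot p/p\|^2$ and $\|v\|^2\ge(1-\theta)\mu\,\|\ddot\omega/\omega\|^2$. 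Combining with $\|u\|^2+\|v\|^2\le\|w\|^2$ and cancelling $(1-\theta)\mu$ proves (\ref{ddNorm}); the product estimate then drops out of (\ref{ddNorm}) via Lemma~\ref{simple}.

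For the last two assertions I would argue directly rather than through $u,v,w$. The left inequality of (\ref{circ1}) is again (\ref{p2}) with $H\ge0$. For the upper bounds, for any compatible vectors $a,b$ the triangle inequality and Cauchy--Schwarz give $|a^{\T}b|\le\sum_i p_i\omega_i\,|a_i/p_i|\,|b_i/\omega_i|\le(1+\theta)\mu\,\|a/p\|\,\|b/\omega\|$ using (\ref{umax1}); taking $(a,b)=(\ddot p,\ddot\omega)$ and $\|\ddot p/p\|\,\|\ddot\omega/\omega\|\le\frac12(\|\ddot p/p\|^2+\|\ddot\omega/\omega\|^2)$ with (\ref{ddNorm}) gives the right inequality of (\ref{circ1}) with constant $\delta_2$, while $(a,b)=(\dot p,\ddot\omega)$ with (\ref{sumEq}) and (\ref{ddNorm}) gives the bound on $|\dot p^{\T}\ddot\omega/\mu|$; the identity $\dot p^{\T}\ddot\omega=\ddot p^{\T}\dot\omega$ from (\ref{p3}) settles the second half of (\ref{circ2}). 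The only real difficulty is bookkeeping: one must scale (\ref{pddk}) by $(p\circ\omega)^{-1/2}$ rather than $(p\circ\omega)^{-1}$ so the cross term stays sign-definite, and must track the powers of $(1\pm\theta)$ and of $n$ carefully so the constants $\delta_2$ and $\delta_3$ come out exactly as stated.
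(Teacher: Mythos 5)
Your proposal is correct and follows essentially the same route as the paper: the same $(p\circ\omega)^{-1/2}$ scaling of (\ref{pddk}) into $u+v=w$ with $u^{\T}v=\ddot p^{\T}\ddot\omega=\ddot x^{\T}H\ddot x\ge 0$, Lemma~\ref{ineq}, the two-sided bounds (\ref{umax1}), and Cauchy--Schwarz for (\ref{circ1})--(\ref{circ2}). The only cosmetic differences are that you use the AM--GM step and the symmetry identity (\ref{p3}) where the paper invokes Lemma~\ref{simple} and simply repeats the argument with the roles of the dotted and double-dotted vectors exchanged.
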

\begin{proof}
See Appendix A.
\end{proof}

Using the bounds established in Lemmas \ref{positive}, \ref{size}, \ref{restSize}, and \ref{sincos},
we can obtain the lower bound and upper bound for $\mu({\alpha})$.
 
\begin{lemma}
Let $(x,p,\omega)=(x,y,z,\lambda,\gamma) \in {\cal N}_2(\theta)$, 
$(\dot{x}, \dot{y},\dot{z},\dot{\lambda}, \dot{\gamma})$ and 
$(\ddot{x}, \ddot{y},\ddot{z},\ddot{\lambda}, \ddot{\gamma})$ 
meet (\ref{doty}) and (\ref{ddoty}). Let $x(\alpha)$, $y(\alpha)$, $z(\alpha)$, 
$\lambda(\alpha)$, and $\gamma(\alpha)$ be defined by (\ref{updatedX}),
(\ref{updatedY}), (\ref{updatedZ}), (\ref{updatedL}), 
and (\ref{updatedG}). Then,
\begin{align}
& \mu(1-\sin({\alpha})) -\frac{1}{2n} \dot{x}^{\T} H \dot{x}\left( (1-\cos(\alpha))^2
   +\sin^2(\alpha) \right)
\nonumber \\
\le & \mu({\alpha})
= \mu (1-\sin({\alpha}))+\frac{1}{2n}
   \left(\ddot{x}^{\T}(\ddot{\gamma}-\ddot{\lambda})-\dot{x}^{\T}(\dot{\gamma}-\dot{\lambda}) \right)
   (1-\cos({\alpha}))^2  \nonumber \\
  & -\frac{1}{2n} \left(\dot{x}^{\T}(\ddot{\gamma}-\ddot{\lambda})+\ddot{x}^{\T}(\dot{\gamma}-\dot{\lambda}) 
   \right)\sin({\alpha}) (1-\cos({\alpha}))
  \nonumber \\
\le & \mu(1-\sin({\alpha})) +\frac{1}{2n} \ddot{x}^{\T} H \ddot{x}\left( (1-\cos(\alpha))^2
   +\sin^2(\alpha) \right).
\label{updatedU}
\end{align}
\label{main2}
\end{lemma}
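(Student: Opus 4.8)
The plan is to compute $\mu(\alpha)=p(\alpha)^{\T}\omega(\alpha)/(2n)$ by multiplying out the arc formulas (\ref{palpha}) and (\ref{kalpha}) and then collapsing the result with the structural identities already available. Expanding $p(\alpha)^{\T}\omega(\alpha)$ produces a combination of the six functions $1$, $\sin(\alpha)$, $1-\cos(\alpha)$, $\sin^2(\alpha)$, $\sin(\alpha)(1-\cos(\alpha))$ and $(1-\cos(\alpha))^2$, with respective coefficients $p^{\T}\omega$, $-(p^{\T}\dot\omega+\dot p^{\T}\omega)$, $p^{\T}\ddot\omega+\ddot p^{\T}\omega$, $\dot p^{\T}\dot\omega$, $-(\dot p^{\T}\ddot\omega+\ddot p^{\T}\dot\omega)$ and $\ddot p^{\T}\ddot\omega$. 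Summing the components of (\ref{pdk}) gives $p^{\T}\dot\omega+\dot p^{\T}\omega=p^{\T}\omega=2n\mu$, so the $1$- and $\sin(\alpha)$-terms combine into $2n\mu(1-\sin(\alpha))$; summing the components of (\ref{pddk}) gives $p^{\T}\ddot\omega+\ddot p^{\T}\omega=-2\dot p^{\T}\dot\omega$, so the $(1-\cos(\alpha))$-term becomes $-2\dot p^{\T}\dot\omega(1-\cos(\alpha))$. Using $\sin^2(\alpha)=1-\cos^2(\alpha)$ (Lemma \ref{sincos}) in the rearranged form $\sin^2(\alpha)-2(1-\cos(\alpha))=-(1-\cos(\alpha))^2$, the $(1-\cos(\alpha))$- and $\sin^2(\alpha)$-terms merge into the single term $-\dot p^{\T}\dot\omega(1-\cos(\alpha))^2$. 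Dividing by $2n$ and replacing $\dot p^{\T}\dot\omega=\dot x^{\T}(\dot\gamma-\dot\lambda)$, $\ddot p^{\T}\ddot\omega=\ddot x^{\T}(\ddot\gamma-\ddot\lambda)$, $\dot p^{\T}\ddot\omega=\dot x^{\T}(\ddot\gamma-\ddot\lambda)$, $\ddot p^{\T}\dot\omega=\ddot x^{\T}(\dot\gamma-\dot\lambda)$ via (\ref{p1})--(\ref{p3}) of Lemma \ref{positive} yields exactly the middle equality of (\ref{updatedU}).

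For the two outer inequalities I would start from that equality, use $\dot x^{\T}(\dot\gamma-\dot\lambda)=\dot x^{\T}H\dot x$ and $\ddot x^{\T}(\ddot\gamma-\ddot\lambda)=\ddot x^{\T}H\ddot x$ (again Lemma \ref{positive}), and then bound only the cross term $-(\dot x^{\T}(\ddot\gamma-\ddot\lambda)+\ddot x^{\T}(\dot\gamma-\dot\lambda))\sin(\alpha)(1-\cos(\alpha))$. A one-line rearrangement shows the desired upper bound $\mu(\alpha)\le\mu(1-\sin(\alpha))+\frac{1}{2n}\ddot x^{\T}H\ddot x\bigl((1-\cos(\alpha))^2+\sin^2(\alpha)\bigr)$ is equivalent to the left-hand inequality of (\ref{quad}), and the desired lower bound $\mu(\alpha)\ge\mu(1-\sin(\alpha))-\frac{1}{2n}\dot x^{\T}H\dot x\bigl((1-\cos(\alpha))^2+\sin^2(\alpha)\bigr)$ is equivalent to the right-hand inequality of (\ref{quad1}); both are supplied directly by Lemma \ref{positive}. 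Note that Lemmas \ref{size} and \ref{restSize} are not actually needed for this statement; they enter only afterwards, when $\dot x^{\T}H\dot x$ and $\ddot x^{\T}H\ddot x$ are further estimated in terms of $\mu$ and $n$.

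The argument is essentially bookkeeping, so the only delicate points are the sign tracking in the product of (\ref{palpha}) with (\ref{kalpha})---in particular the $\sin(\alpha)$ factors contribute a $+\dot p^{\T}\dot\omega\sin^2(\alpha)$ term while the identity obtained from (\ref{pddk}) contributes a $-2\dot p^{\T}\dot\omega(1-\cos(\alpha))$ term of opposite sign, and these must be merged correctly via the trigonometric identity above---and, at the very end, selecting the correct half of the correct inequality among (\ref{quad})--(\ref{quad1}): (\ref{quad}) pairs $\dot x^{\T}H\dot x$ with $(1-\cos(\alpha))^2$ and $\ddot x^{\T}H\ddot x$ with $\sin^2(\alpha)$, which is what the upper bound needs, whereas (\ref{quad1}) uses the opposite pairing, which is what the lower bound needs. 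Once this matching is arranged, both inequalities are immediate, so I expect no genuine analytic obstacle beyond the algebra itself.
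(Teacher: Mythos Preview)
Your proposal is correct and follows essentially the same route as the paper: expand the bilinear form, collapse the linear-in-$(1-\cos\alpha)$ and $\sin^2\alpha$ terms via (\ref{pdk}), (\ref{pddk}) and the identity $\sin^2\alpha-2(1-\cos\alpha)=-(1-\cos\alpha)^2$, then invoke the first inequality of (\ref{quad}) for the upper bound and the second inequality of (\ref{quad1}) for the lower bound. The only cosmetic difference is that the paper first computes $y(\alpha)^{\T}\lambda(\alpha)$ and $z(\alpha)^{\T}\gamma(\alpha)$ separately before adding, whereas you work directly in the compact $(p,\omega)$ notation; your observation that Lemmas \ref{size} and \ref{restSize} are not actually used here is also accurate.
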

\begin{proof}
See Appendix A.
\end{proof}

To keep all the iterates of the algorithm inside the strictly feasible set, we need 
$(p(\alpha),\omega(\alpha)) >0$ for all iterations. We will prove that this is guaranteed
if $\mu(\alpha) >0$ holds. The following corollary states the condition for
$\mu(\alpha) >0$ to hold.

\begin{corollary}
If $\mu >0$, then for any fixed $\theta \in (0,1)$, there is an $\bar{\alpha}>0$ 
depending on $\theta$, such that for any 
$\sin(\alpha) \le \sin(\bar{\alpha})$, $\mu(\alpha) >0$.
In particular, if $\theta = 0.19$, $\sin(\bar{\alpha}) \ge 0.6158$.
\label{uagt1}
\end{corollary}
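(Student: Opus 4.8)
The plan is to obtain a scalar inequality for $\mu(\alpha)$ from the already–established left bound in Lemma~\ref{main2} and then to make it strictly positive by restricting $\alpha$. Starting from
\[
\mu(\alpha) \ge \mu\,(1-\sin\alpha) - \frac{1}{2n}\,\dot{x}^{\T}H\dot{x}\left((1-\cos\alpha)^2 + \sin^2\alpha\right),
\]
I would first apply the elementary identity $(1-\cos\alpha)^2 + \sin^2\alpha = 2(1-\cos\alpha)$, and then invoke Lemma~\ref{size}: by (\ref{p1}) of Lemma~\ref{positive} and (\ref{circNorm}) we have $\dot{x}^{\T}H\dot{x} = \dot{p}^{\T}\dot{\omega} \le \delta_1 n\mu$ with $\delta_1 = \frac{1+\theta}{1-\theta}$. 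Substituting gives
\[
\mu(\alpha) \ge \mu\left(1 - \sin\alpha - \delta_1(1-\cos\alpha)\right).
\]
Since $\mu>0$, it suffices to produce $\bar\alpha>0$ such that $g(\alpha) := 1 - \sin\alpha - \delta_1(1-\cos\alpha)$ is positive on $[0,\bar\alpha]$.

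Next I would study $g$ on $[0,\tfrac{\pi}{2}]$: we have $g(0)=1>0$ and $g'(\alpha) = -\cos\alpha - \delta_1\sin\alpha < 0$, so $g$ is strictly decreasing there, while $g(\tfrac{\pi}{2}) = -\delta_1 < 0$. Hence there is a unique root $\alpha^{*}\in(0,\tfrac{\pi}{2})$ of $g$, and any $\bar\alpha$ with $0<\bar\alpha<\alpha^{*}$ satisfies $g(\alpha)\ge g(\bar\alpha)>0$ for all $\alpha\in[0,\bar\alpha]$. Because $\sin$ is increasing on $[0,\tfrac{\pi}{2}]$, the hypothesis $\sin\alpha\le\sin\bar\alpha$ (with $\alpha,\bar\alpha$ in that interval) is equivalent to $\alpha\le\bar\alpha$, and therefore $\mu(\alpha)\ge \mu\,g(\alpha) > 0$. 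This establishes the first assertion, with $\bar\alpha$ depending on $\theta$ only through $\delta_1$.

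For the quantitative claim I would set $\theta = 0.19$, so $\delta_1 = 1.19/0.81 \approx 1.4691$, and simply check that $g$ is still positive at the angle $\bar\alpha$ with $\sin\bar\alpha = 0.6158$: there $\cos\bar\alpha = \sqrt{1-0.6158^2}$ and a direct evaluation gives $g(\bar\alpha) = (1-0.6158) - \delta_1\bigl(1 - \sqrt{1-0.6158^2}\bigr) \approx 0.07 > 0$, so $\alpha^{*} > \bar\alpha$ and $\sin(\bar\alpha)\ge 0.6158$ is a valid (conservative) choice. Alternatively, one can locate $\alpha^{*}$ in closed form: the substitution $u = 1-\sin\alpha^{*}$ turns $1-\sin\alpha^{*} = \delta_1(1-\cos\alpha^{*})$, together with $\sin^2+\cos^2=1$, into the quadratic $u^{2}(1+\delta_1^{-2}) - 2u(1+\delta_1^{-1}) + 1 = 0$, whose relevant root is checked numerically. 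I do not expect any genuine obstacle: the analytic content lies entirely in Lemma~\ref{main2}, and the only points requiring care are the identity $(1-\cos\alpha)^{2}+\sin^{2}\alpha = 2(1-\cos\alpha)$, keeping the final inequality strict by taking $\bar\alpha$ strictly below $\alpha^{*}$, and the arithmetic in the $\theta=0.19$ verification.
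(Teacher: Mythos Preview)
Your argument is correct and close in spirit to the paper's, but you take a slightly different algebraic route. After invoking the lower bound of Lemma~\ref{main2} and the estimate $\dot{x}^{\T}H\dot{x}=\dot{p}^{\T}\dot{\omega}\le \delta_1 n\mu$ from Lemma~\ref{size} (exactly as the paper does), you simplify via the \emph{identity} $(1-\cos\alpha)^2+\sin^2\alpha=2(1-\cos\alpha)$, arriving at $g(\alpha)=1-\sin\alpha-\delta_1(1-\cos\alpha)$. The paper instead bounds $(1-\cos\alpha)^2\le \sin^4\alpha$ (Lemma~\ref{sincos}) and works with $r(\alpha)=1-\sin\alpha-\tfrac{\delta_1}{2}(\sin^4\alpha+\sin^2\alpha)$. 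Your bound is in fact sharper (e.g.\ at $\sin\alpha=0.6158$, $\theta=0.19$, your $g\approx 0.07>0$ whereas the paper's $r$ vanishes), so your numerical verification that $\sin(\bar\alpha)\ge 0.6158$ is genuinely conservative. The paper's route buys something else: $r$ is a quartic in $\sin\alpha$, which is later reused in the implementation (equation~(\ref{quartic})) where closed-form root formulas apply; your $g$ mixes $\sin\alpha$ and $\cos\alpha$, which is fine for the corollary but would need the substitution you sketch (or bisection) for practical step-size computation. One cosmetic point: strictly speaking you should take $\bar\alpha=\alpha^{*}$ rather than ``strictly below $\alpha^{*}$'' to match the statement's phrasing ``$\sin(\bar\alpha)\ge 0.6158$''; since the inequality $\mu(\alpha)\ge \mu\,g(\alpha)$ is itself not sharp, $\mu(\alpha^{*})>0$ still holds, but this is harmless either way.
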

\begin{proof}
From Lemmas \ref{positive} and \ref{sincos}, we have
$\dot{x}^{\T} H\dot{x}^{\T} =\dot{x}^{\T} (\dot{\gamma}-\dot{\lambda})
=\dot{p}^{\T} \dot{\omega}$ and $( (1-\cos(\alpha))^2 \le \sin^4(\alpha)$. Therefore, from 
Lemmas \ref{main2} and \ref{size}, we have
\begin{align} \nonumber
\mu(\alpha) & \ge \mu \left( 1-\sin({\alpha}) -\frac{1}{2n\mu}
\dot{p}^{\T} \dot{\omega} \Big( \sin^4(\alpha)+\sin^2(\alpha)  \Big) \right)
\nonumber \\
& \ge \mu \left( 1-\sin({\alpha}) -\frac{(1+\theta)}{2(1-\theta)}
   \Big( \sin^4(\alpha)   +\sin^2(\alpha) \Big) \right):=\mu r(\alpha).
   \nonumber
\end{align}
Since $\mu >0$, and $r(\alpha)$ is a monotonic decreasing function in $[0, \frac{\pi}{2}]$ with $r(0)>0$, 
$r(\frac{\pi}{2})<0$, there is a unique real solution 
$\sin(\bar{\alpha}) \in (0,1)$ of $r(\alpha)=0$ such that
for all $\sin(\alpha) < \sin(\bar{\alpha})$, $r(\alpha) >0$ , or 
$\mu(\alpha) >0$. It is easy to check that if $\theta = 0.19$, 
$\sin(\bar{\alpha}) = 0.6158$ is the solution of $r(\alpha)=0$.
\hfill \qed
\end{proof}

\begin{remark}
Corollary \ref{uagt1} indicates that for any $\theta \in (0,1)$, there is a positive $\bar{\alpha}$ 
such that for $\alpha \le \bar{\alpha}$, $\mu(\alpha)>0$.
Intuitively, to search in a wider region will generate a longer step. Therefore, the larger the $\theta$ is, 
the better. But to derive the convergence result, $\theta \le 0.22$ is imposed in Lemma \ref{aBound} and
$\theta \le 0.19$ is imposed in Lemma \ref{ImproveMu}.
\end{remark}

To reduce the duality gap in an iteration, we need to have $\mu(\alpha) \le \mu$.
For linear programming, it is known \cite{yang09} that $\mu(\alpha) \le \mu$ 
for $\alpha \in [0, \hat{\alpha}]$ with $\hat{\alpha}=\frac{\pi}{2}$, 
and the larger the $\alpha$ in the interval is, the smaller the 
$\mu(\alpha)$ will be. This claim is not true for the convex
quadratic programming with box constraints and it needs to be modified as follows.

\begin{lemma}
Let $(x,p,\omega)=(x,y,z,\lambda,\gamma) \in {\cal N}_2(\theta)$, 
$(\dot{x}, \dot{y}, \dot{z}, \dot{\lambda}, \dot{\gamma})$
and $(\ddot{x}, \ddot{y}, \ddot{z}, \ddot{\lambda}, \ddot{\gamma})$ meet (\ref{doty}) and 
(\ref{ddoty}). Let $x(\alpha)$, $y(\alpha)$, $z(\alpha)$, $\lambda(\alpha)$, and $\gamma(\alpha)$ 
be defined by (\ref{updatedX}), (\ref{updatedY}), (\ref{updatedZ}), (\ref{updatedL}), 
and (\ref{updatedG}). Then, there exists 
\begin{equation}
\hat{\alpha}= \left\{ \begin{array}{rl}
\frac{\pi}{2},  &  \text{if } \frac{\ddot{x}^{\T}H\ddot{x}}{n\mu} \le 1 \\
 & \\
\sin^{-1}(g),  &  \text{if } \frac{\ddot{x}^{\T}H\ddot{x}}{n\mu} >   1 \\
\end{array} \right.
\label{ahat}
\end{equation} 
where 
\[
g=\sqrt[3]{\frac{n\mu}{\ddot{x}^{\T}H\ddot{x}} + 
\sqrt{\left( \frac{n\mu}{\ddot{x}^{\T}H\ddot{x}} \right)^2 + 
\left( \frac{1}{3} \right)^3 } }
+\sqrt[3]{\frac{n\mu}{\ddot{x}^{\T}H\ddot{x}} - 
\sqrt{\left( \frac{n\mu}{\ddot{x}^{\T}H\ddot{x}} \right)^2 + 
\left( \frac{1}{3} \right)^3 } },
\]
such that for
every $\alpha \in [0, \hat{\alpha}]$, $\mu({\alpha}) \le  \mu$. 
\label{main3}
\end{lemma}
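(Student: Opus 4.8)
The plan is to reduce the claim to a scalar cubic inequality in $\sin(\alpha)$. I would start from the upper bound for $\mu(\alpha)$ in Lemma \ref{main2}, which after subtracting $\mu$ from both sides and using $\mu(1-\sin(\alpha))-\mu=-\mu\sin(\alpha)$ reads
\[
\mu(\alpha)-\mu \;\le\; -\mu\sin(\alpha)+\frac{1}{2n}\,\ddot{x}^{\T}H\ddot{x}\,\bigl((1-\cos(\alpha))^2+\sin^2(\alpha)\bigr).
\]
By Lemma \ref{sincos} we have $1-\cos(\alpha)\le\sin^2(\alpha)$ on $[0,\frac{\pi}{2}]$, hence $(1-\cos(\alpha))^2\le\sin^4(\alpha)$, and the right-hand side is bounded above by
\[
-\mu\sin(\alpha)+\frac{1}{2n}\,\ddot{x}^{\T}H\ddot{x}\,\bigl(\sin^4(\alpha)+\sin^2(\alpha)\bigr)
=\sin(\alpha)\Bigl(-\mu+\tfrac{1}{2n}\,\ddot{x}^{\T}H\ddot{x}\,\bigl(\sin^3(\alpha)+\sin(\alpha)\bigr)\Bigr).
\]
Since $\sin(\alpha)\ge 0$ on $[0,\frac{\pi}{2}]$, the desired conclusion $\mu(\alpha)\le\mu$ follows as soon as $\ddot{x}^{\T}H\ddot{x}\bigl(\sin^3(\alpha)+\sin(\alpha)\bigr)\le 2n\mu$; note $\mu>0$ because the point lies in ${\cal N}_2(\theta)\subset{\cal F}^o$.

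I would then split into the two cases of the statement according to whether $\ddot{x}^{\T}H\ddot{x}\le n\mu$ or $\ddot{x}^{\T}H\ddot{x}>n\mu$. The map $s\mapsto s^3+s$ is strictly increasing on $[0,1]$ with value $0$ at $s=0$ and value $2$ at $s=1$. In the first case (which includes the degenerate situation $\ddot{x}=0$), $\ddot{x}^{\T}H\ddot{x}\bigl(\sin^3(\alpha)+\sin(\alpha)\bigr)\le n\mu\cdot 2=2n\mu$ for every $\alpha\in[0,\frac{\pi}{2}]$, so $\hat{\alpha}=\frac{\pi}{2}$ works. In the second case, I would apply Cardano's formula (Lemma \ref{cubic}) to $s^3+s-\frac{2n\mu}{\ddot{x}^{\T}H\ddot{x}}=0$, that is with $p=1$ and $q=-\frac{2n\mu}{\ddot{x}^{\T}H\ddot{x}}$; since $\Delta=(q/2)^2+(p/3)^3=\bigl(\frac{n\mu}{\ddot{x}^{\T}H\ddot{x}}\bigr)^2+\bigl(\frac{1}{3}\bigr)^3>0$, there is a unique real root, which is exactly the quantity $g$ displayed in the lemma. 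Monotonicity of $s\mapsto s^3+s$ together with $0<\frac{2n\mu}{\ddot{x}^{\T}H\ddot{x}}<2$ gives $0<g<1$, so $\sin^{-1}(g)\in(0,\frac{\pi}{2})$ is well defined; and for $\alpha\in[0,\sin^{-1}(g)]$ we have $\sin(\alpha)\le g$, hence $\sin^3(\alpha)+\sin(\alpha)\le g^3+g=\frac{2n\mu}{\ddot{x}^{\T}H\ddot{x}}$, which is precisely the inequality needed.

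Most of this is bookkeeping. The one step that needs thought is the choice of bound on $(1-\cos(\alpha))^2+\sin^2(\alpha)$: the identity $(1-\cos(\alpha))^2+\sin^2(\alpha)=2(1-\cos(\alpha))$ would also give a clean (linear) threshold $\sin(\alpha)\le n\mu/\ddot{x}^{\T}H\ddot{x}$, but because $\sin^4(\alpha)+\sin^2(\alpha)\le 2\sin^2(\alpha)$ on $[0,\frac{\pi}{2}]$, using $(1-\cos(\alpha))^2\le\sin^4(\alpha)$ produces the larger admissible $\hat{\alpha}$ and is what reproduces the Cardano expression in the statement. I expect that comparison, together with the verification $0<g<1$ (so that $\sin^{-1}(g)$ is legitimate), to be the only genuinely delicate points; factoring out $\sin(\alpha)$, the degenerate case $\ddot{x}=0$, and positivity of $\mu$ are immediate.
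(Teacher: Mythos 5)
Your proposal is correct and follows essentially the same route as the paper's proof: bound $(1-\cos(\alpha))^2$ by $\sin^4(\alpha)$ via Lemma \ref{sincos}, factor out $\sin(\alpha)$, reduce to the cubic $s^3+s=\frac{2n\mu}{\ddot{x}^{\T}H\ddot{x}}$, and apply Cardano's formula, with the case split at $\frac{\ddot{x}^{\T}H\ddot{x}}{2n\mu}\le\frac{1}{2}$ (equivalently $\frac{\ddot{x}^{\T}H\ddot{x}}{n\mu}\le 1$). Your added verifications that $0<g<1$ and the handling of $\ddot{x}=0$ are details the paper leaves implicit.
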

\begin{proof}
See Appendix A.
\end{proof}

According to Theorem \ref{thm1}, Lemmas \ref{eqCondition}, \ref{size}, 
\ref{restSize}, and \ref{main3}, if $\alpha$ is small enough, then 
$(p(\alpha), \omega(\alpha))>0$, and $\mu(\alpha) < \mu$, i.e., the search 
along the ellipse defined by Theorem \ref{thm1} will generate a strictly feasible 
point with a smaller duality gap. Since $(p,\omega) > 0$ 
holds in all iterations, reducing the duality gap to zero means approaching 
the solution of the convex quadratic programming. We will apply a similar
idea used in \cite{mty93, yang10}, i.e., starting with an iterate in
${\cal N}_2(\theta)$, searching along the approximated central path to reduce
the duality gap and to keep the iterate in ${\cal N}_2(2\theta)$, and 
then making a correction to move the iterate back to ${\cal N}_2(\theta)$. 
First, we will introduce the following notations.
\[
a_0 =  -\theta \mu<0,
\]
\[
a_1 = \theta \mu>0,
\]
\[
a_2 = 2\theta \frac{\dot{p}^{\T}\dot{\omega}}{2n}= 2\theta \frac{\dot{x}^{\T}(\dot{\gamma}-\dot{\lambda})}{2n} 
= 2\theta \frac{\dot{x}^{\T}H\dot{x}}{2n} \ge 0,
\]
\begin{eqnarray}
a_3 & = & \Big\lVert \dot{p} \circ \ddot{\omega}+\dot{\omega} \circ \ddot{p} 
-\frac{1}{2n}(\dot{p}^{\T}\ddot{\omega}+\dot{\omega}^{\T}\ddot{p})e \Big\rVert
\ge 0, \nonumber
\end{eqnarray}
\begin{eqnarray}
a_4 & = & \Big\lVert \ddot{p} \circ \ddot{\omega}-\dot{\omega} \circ \dot{p}
-\frac{1}{2n}(\ddot{p}^{\T}\ddot{\omega}-\dot{\omega}^{\T}\dot{p})e \Big\rVert
+ 2\theta \frac{\dot{p}^{\T}\dot{\omega}}{2n}
\nonumber \\
& = & \Big\lVert \ddot{p} \circ \ddot{\omega}-\dot{\omega} \circ \dot{p}
-\frac{1}{2n}(\ddot{p}^{\T}\ddot{\omega}-\dot{\omega}^{\T}\dot{p})e \Big\rVert 
+ 2\theta \frac{\dot{x}^{\T}H\dot{x}}{2n} \ge 0.\nonumber 
\end{eqnarray}
We also define a quartic polynomial in terms of $\sin(\alpha)$ as follows
\begin{equation}
q(\alpha)=a_4 \sin^4(\alpha) +a_3\sin^3(\alpha)+a_2\sin^2(\alpha)+a_1\sin(\alpha)+a_0 =0.
\label{alpha1}
\end{equation}
Since $q(\alpha)$ is a monotonic increasing function of $\alpha \in [0, \frac{\pi}{2}]$,
$q(0) = -\theta \mu < 0$ and $q(\frac{\pi}{2}) = a_2+a_3+a_4>0$ if $\dot{x} \ne 0$, the 
polynomial has exactly one positive root in $[0, \frac{\pi}{2}]$. Moreover, since 
(\ref{alpha1}) is a quartic equation, all the solutions are analytical and the computational 
cost is independent of the size of $H$ and negligible \cite{evans94}.

\begin{lemma}
Let $({x}, p, \omega)=(x, y, z, \lambda, \omega) \in {\cal N}_2(\theta)$, 
$(\dot{x}, \dot{y}, \dot{z}, \dot{\lambda}, \dot{\omega})$ and 
$(\ddot{x}, \ddot{y}, \ddot{z}, \ddot{\lambda}, \ddot{\omega})$
be calculated from (\ref{doty}) and (\ref{ddoty}). Denote $\sin(\tilde{\alpha})$
be the only positive real solution of (\ref{alpha1}) in $[0,1]$. 
Assume $\sin(\alpha) \le \min \{ \sin(\tilde{\alpha}), \sin(\bar{\alpha}) \}$, let 
$(x(\alpha), y(\alpha), z(\alpha), \lambda(\alpha), \gamma(\alpha))$ and $\mu(\alpha)$ be updated as follows
\begin{equation}
(x(\alpha), y(\alpha), z(\alpha), \lambda(\alpha), \gamma(\alpha))=({x},y,z,{\lambda},\gamma)
-(\dot{x}, \dot{y}, \dot{z}, \dot{\lambda}, \dot{\gamma})\sin(\alpha)
+(\ddot{x}, \ddot{y}, \ddot{z}, \ddot{\lambda}, \ddot{\gamma})(1-\cos(\alpha)),
\label{poly2}
\end{equation}
\begin{equation}
\mu(\alpha)={\mu} (1-\sin({\alpha}))+\frac{1}{2n}
   \Big( (\ddot{p}^{\T}\ddot{\omega}-\dot{p}^{\T}\dot{\omega})
   (1-\cos({\alpha}))^2  
   - (\dot{p}^{\T}\ddot{\omega}+\ddot{p}^{\T}\dot{\omega})\sin({\alpha})
   (1-\cos({\alpha})) \Big).
\label{updateMu2}
\end{equation}
Then $(x(\alpha), y(\alpha), z(\alpha), \lambda(\alpha), \gamma(\alpha)) \in {\cal N}_2(2\theta)$.
\label{theta1}
\end{lemma}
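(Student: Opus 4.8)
The plan is to check that the updated point $(x(\alpha),y(\alpha),z(\alpha),\lambda(\alpha),\gamma(\alpha))$ satisfies the two requirements for membership in ${\cal N}_2(2\theta)$: that it lies in ${\cal F}^o$, and that $\|p(\alpha)\circ\omega(\alpha)-\mu(\alpha)e\|\le 2\theta\mu(\alpha)$. The equality constraints defining ${\cal F}^o$ are already supplied by Lemma \ref{eqCondition}, so the real work is threefold: (a) obtain a sharp upper bound on $\|p(\alpha)\circ\omega(\alpha)-\mu(\alpha)e\|$; (b) obtain a matching lower bound on $2\theta\mu(\alpha)$; and (c) deduce strict positivity of $(p(\alpha),\omega(\alpha))$.

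For (a) I would first form $p(\alpha)\circ\omega(\alpha)$ from (\ref{palpha})--(\ref{kalpha}), substitute the identities (\ref{pdk}) and (\ref{pddk}) to eliminate the term $\sin(\alpha)(p\circ\dot\omega+\dot p\circ\omega)$ and the term $(1-\cos(\alpha))(p\circ\ddot\omega+\ddot p\circ\omega)$, and use $\sin^2(\alpha)-2(1-\cos(\alpha))=-(1-\cos(\alpha))^2$ to collect the $\dot p\circ\dot\omega$ contributions. This yields
\[
p(\alpha)\circ\omega(\alpha)=(1-\sin(\alpha))\,p\circ\omega+(1-\cos(\alpha))^2(\ddot p\circ\ddot\omega-\dot p\circ\dot\omega)-\sin(\alpha)(1-\cos(\alpha))(\dot p\circ\ddot\omega+\ddot p\circ\dot\omega),
\]
and averaging components reproduces exactly (\ref{updateMu2}) for $\mu(\alpha)$. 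Subtracting $\mu(\alpha)e$, applying the triangle inequality, and recognizing the two centered-vector norms as precisely $a_3$ and $a_4-a_2$ from their definitions, I get
\[
\|p(\alpha)\circ\omega(\alpha)-\mu(\alpha)e\|\le(1-\sin(\alpha))\|p\circ\omega-\mu e\|+(a_4-a_2)(1-\cos(\alpha))^2+a_3\sin(\alpha)(1-\cos(\alpha)).
\]
Then I would use $\|p\circ\omega-\mu e\|\le\theta\mu=a_1=-a_0$ (since $(x,p,\omega)\in{\cal N}_2(\theta)$) together with Lemma \ref{sincos}, which gives $1-\cos(\alpha)\le\sin^2(\alpha)$ and hence $(1-\cos(\alpha))^2\le\sin^4(\alpha)$ and $\sin(\alpha)(1-\cos(\alpha))\le\sin^3(\alpha)$ (legitimate because $a_3\ge0$ and $a_4-a_2\ge0$), to obtain
\[
\|p(\alpha)\circ\omega(\alpha)-\mu(\alpha)e\|\le -a_0-a_1\sin(\alpha)+a_3\sin^3(\alpha)+(a_4-a_2)\sin^4(\alpha).
\]

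For (b) I would feed the lower estimate of Lemma \ref{main2}, $\mu(\alpha)\ge(1-\sin(\alpha))\mu-\frac{1}{2n}\dot{x}^{\T}H\dot{x}\bigl((1-\cos(\alpha))^2+\sin^2(\alpha)\bigr)$, into $2\theta\mu(\alpha)$, again use $(1-\cos(\alpha))^2\le\sin^4(\alpha)$, and recall $a_2=2\theta\,\dot{x}^{\T}H\dot{x}/2n$, which gives
\[
2\theta\mu(\alpha)\ge -2a_0-2a_1\sin(\alpha)-a_2\sin^2(\alpha)-a_2\sin^4(\alpha).
\]
Comparing the two displayed estimates, the desired inequality $\|p(\alpha)\circ\omega(\alpha)-\mu(\alpha)e\|\le2\theta\mu(\alpha)$ follows once
\[
a_0+a_1\sin(\alpha)+a_2\sin^2(\alpha)+a_3\sin^3(\alpha)+a_4\sin^4(\alpha)\le 0,
\]
i.e. once $q(\alpha)\le 0$; and since $q$ is monotone increasing on $[0,\frac{\pi}{2}]$ with $q(\tilde\alpha)=0$, this holds for every $\alpha$ with $\sin(\alpha)\le\sin(\tilde\alpha)$. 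I expect this telescoping — arranging the one-sided bounds so that every leftover term cancels and the criterion reduces precisely to the quartic $q$ — to be the delicate part; it is exactly the reason the coefficients $a_2,a_3,a_4$ (in particular the extra $2\theta\,\dot{x}^{\T}H\dot{x}/2n$ built into $a_4$) are defined the way they are, so the bookkeeping must be kept scrupulous.

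Finally, for (c): the hypothesis $\sin(\alpha)\le\sin(\bar\alpha)$ gives $\mu(\alpha)>0$ by Corollary \ref{uagt1}, so combined with the bound just proved, each coordinate satisfies $p_i(\alpha)\omega_i(\alpha)\ge(1-2\theta)\mu(\alpha)>0$ (using $2\theta<1$, which holds for the values of $\theta$ used in the algorithm). Since $(p(0),\omega(0))=(p,\omega)>0$ and the coordinates $p_i(\cdot),\omega_i(\cdot)$ are continuous with their products staying strictly positive on $[0,\alpha]$, no coordinate can change sign, hence $(p(\alpha),\omega(\alpha))>0$. Together with the equality constraints from Lemma \ref{eqCondition} this places the updated point in ${\cal F}^o$, and with the norm bound it lies in ${\cal N}_2(2\theta)$, which completes the proof.
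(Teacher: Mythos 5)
Your proposal is correct and follows essentially the same route as the paper's proof: the same expansion of $p(\alpha)\circ\omega(\alpha)$ via (\ref{pdk})--(\ref{pddk}) and the identity $\sin^2(\alpha)-2(1-\cos(\alpha))=-(1-\cos(\alpha))^2$, the same triangle-inequality and Lemma \ref{sincos} bounds reducing the proximity condition to $q(\alpha)\le 0$, and the same use of Corollary \ref{uagt1} plus continuity for positivity. The only difference is organizational — you derive the upper bound on the norm and the lower bound on $2\theta\mu(\alpha)$ separately and compare at the end, whereas the paper substitutes the quartic inequality mid-chain — but the algebra is identical.
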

\begin{proof} 
See Appendix A.
\end{proof}

\begin{remark}
It is worthwhile to note, by examining the proof of Lemma \ref{theta1},
that $\sin(\tilde{\alpha})$ is selected for the proximity 
condition (\ref{2t}) to hold, and $\sin(\bar{\alpha})$ is selected for $\mu(\alpha)>0$,
thereby assuring the positivity condition (\ref{pos1}) to hold. 
\end{remark}

The lower bound of $\sin(\bar{\alpha})$ is estimated in Corollary \ref{uagt1}.
To estimate the lower bound of $\sin(\tilde{\alpha})$, we need the following lemma.

\begin{lemma}
Let $(x,p, \omega) \in {\cal N}_2(\theta)$, $(\dot{x}, \dot{p}, \dot{\omega})$
and $(\ddot{x}, \ddot{p}, \ddot{\omega})$ meet (\ref{doty}) and 
(\ref{ddoty}). Then
\begin{equation}
\Bigl\lVert \dot{p} \circ \dot{\omega} \Bigr\rVert
\le \frac{(1+\theta)}{(1-\theta)}n\mu,
\label{cnorm1}
\end{equation}
\begin{equation}
\Bigl\lVert \ddot{p} \circ \ddot{\omega} \Bigr\rVert
\le \frac{2(1+\theta)^2}{(1-\theta)^3}n^2\mu,
\label{cnorm2}
\end{equation}
\begin{equation}
\Bigl\lVert \ddot{p} \circ \dot{\omega} \Bigr\rVert
\le \frac{2\sqrt{2}(1+\theta)^{\frac{3}{2}}}{(1-\theta)^2}n^{\frac{3}{2}}\mu,
\label{cnorm3}
\end{equation}
\begin{equation}
\Bigl\lVert \dot{p} \circ \ddot{\omega} \Bigr\rVert
\le \frac{2\sqrt{2}(1+\theta)^{\frac{3}{2}}}{(1-\theta)^2}n^{\frac{3}{2}}\mu.
\label{cnorm4}
\end{equation}
\label{moresize}
\end{lemma}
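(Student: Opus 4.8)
The plan is to reduce all four inequalities to one elementary fact: for vectors $u$ and $v$ of the same size, $\| u \circ v \| \le \| u \| \, \| v \|$, which follows from $\| u \circ v \|^2 = \sum_i u_i^2 v_i^2 \le \left( \sum_i u_i^2 \right)\left( \sum_j v_j^2 \right)$. Together with the componentwise neighborhood estimate, this converts each Hadamard-product norm into a product of two $\ell_2$-norms already bounded in Lemmas \ref{size} and \ref{restSize}. Note that, unlike Lemma \ref{wright0}, this sub-multiplicative bound needs no sign hypothesis, which is why it is the right tool here.

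First I would write, componentwise, $\dot{p}_i \dot{\omega}_i = \frac{\dot{p}_i}{p_i} \cdot \frac{\dot{\omega}_i}{\omega_i} \cdot p_i \omega_i$; this is legitimate because $(x, p, \omega) \in {\cal N}_2(\theta) \subset {\cal F}^o$ forces $p_i > 0$ and $\omega_i > 0$. Since $(x, p, \omega) \in {\cal N}_2(\theta)$, inequality (\ref{umax1}) gives $p_i \omega_i \le (1+\theta) \mu$ for every $i$, applied separately to the $(y,\lambda)$ block and the $(z,\gamma)$ block of $p \circ \omega$. Hence, using also $\| u \circ v \| \le \| u \| \, \| v \|$ and (\ref{circNorm0}),
\[
\| \dot{p} \circ \dot{\omega} \| \le (1+\theta) \mu \left\| \frac{\dot{p}}{p} \circ \frac{\dot{\omega}}{\omega} \right\| \le (1+\theta) \mu \left\| \frac{\dot{p}}{p} \right\| \left\| \frac{\dot{\omega}}{\omega} \right\| \le (1+\theta) \mu \cdot \frac{n}{1-\theta},
\]
which is (\ref{cnorm1}). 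The identical factorization of $\ddot{p}_i \ddot{\omega}_i$, combined with the bound $\left\| \frac{\ddot{p}}{p} \right\| \left\| \frac{\ddot{\omega}}{\omega} \right\| \le \frac{2(1+\theta)n^2}{(1-\theta)^3}$ (obtained from (\ref{ddNorm}) via $\| a \| \| b \| \le \frac{1}{2}(\| a \|^2 + \| b \|^2)$, or directly from Lemma \ref{restSize}), yields (\ref{cnorm2}).

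For the mixed terms (\ref{cnorm3}) and (\ref{cnorm4}) I would factor $\ddot{p}_i \dot{\omega}_i = \frac{\ddot{p}_i}{p_i} \cdot \frac{\dot{\omega}_i}{\omega_i} \cdot p_i \omega_i$, bound $p_i \omega_i \le (1+\theta)\mu$, apply $\| u \circ v \| \le \| u \| \, \| v \|$, and then use $\left\| \frac{\ddot{p}}{p} \right\| \le \left( \frac{4(1+\theta)n^2}{(1-\theta)^3} \right)^{1/2}$ from (\ref{ddNorm}) and $\left\| \frac{\dot{\omega}}{\omega} \right\| \le \left( \frac{2n}{1-\theta} \right)^{1/2}$ from (\ref{sumEq}); their product is $\frac{2\sqrt{2}(1+\theta)^{1/2} n^{3/2}}{(1-\theta)^2}$, and multiplying by $(1+\theta)\mu$ gives (\ref{cnorm3}). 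Inequality (\ref{cnorm4}) follows from the symmetric computation with the dotted and double-dotted factors exchanged. There is no genuine difficulty in this lemma; the only points requiring care are that the componentwise inequality $p_i \omega_i \le (1+\theta)\mu$ must be invoked block by block, and that the crude sub-multiplicative bound $\| u \circ v \| \le \| u \| \, \| v \|$ — rather than the sharper estimate of Lemma \ref{wright0} — is precisely what produces the target constants.
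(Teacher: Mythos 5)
Your proposal is correct and follows essentially the same route as the paper: factor each component as $\frac{\dot p_i}{p_i}\frac{\dot\omega_i}{\omega_i}\,p_i\omega_i$, use $p_i\omega_i\le(1+\theta)\mu$ from (\ref{umax1}), apply $\|u\circ v\|\le\|u\|\,\|v\|$, and invoke the norm bounds of Lemmas \ref{size} and \ref{restSize}. The only cosmetic difference is in the mixed terms, where the paper bounds the sum $\|\ddot p\circ\dot\omega\|^2+\|\dot p\circ\ddot\omega\|^2$ via the product $\bigl(\|\tfrac{\dot p}{p}\|^2+\|\tfrac{\dot\omega}{\omega}\|^2\bigr)\bigl(\|\tfrac{\ddot p}{p}\|^2+\|\tfrac{\ddot\omega}{\omega}\|^2\bigr)$ while you bound each factor separately; both give the same constants.
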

\begin{proof}
See Appendix A.
\end{proof}

\begin{lemma}
Let $\theta \le 0.22$. Then $\sin(\tilde{\alpha}) \ge \frac{\theta}{\sqrt{n}}$.
\label{aBound}
\end{lemma}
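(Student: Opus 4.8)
The plan is to convert the statement into a single scalar inequality and then into an elementary polynomial inequality in $\theta$. Since the coefficients $a_1,a_2,a_3,a_4$ are all nonnegative, the polynomial $Q(s):=a_4s^4+a_3s^3+a_2s^2+a_1s+a_0$, viewed as a function of $s=\sin(\alpha)\in[0,1]$, is nondecreasing, with $Q(0)=a_0=-\theta\mu<0$, and by the discussion following (\ref{alpha1}) its unique root in $(0,1]$ is $\sin(\tilde\alpha)$. Hence it suffices to show $Q(\theta/\sqrt n)\le 0$ (the argument $\theta/\sqrt n$ is admissible because $\theta\le 0.22<1\le\sqrt n$). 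Using $a_0=-\theta\mu$ and $a_1=\theta\mu$, this reduces to
\[
\frac{\theta^{2}\mu}{\sqrt n}+\frac{a_2\theta^{2}}{n}+\frac{a_3\theta^{3}}{n^{3/2}}+\frac{a_4\theta^{4}}{n^{2}}\le\theta\mu .
\]

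Next I would bound the coefficients $a_2,a_3,a_4$ using the estimates already proved. For $a_2=2\theta\,\dot x^{\T}H\dot x/(2n)=\theta\,\dot p^{\T}\dot\omega/n$ (Lemma~\ref{positive}), Lemma~\ref{size} gives $\dot p^{\T}\dot\omega\le\frac{1+\theta}{1-\theta}n\mu$, so $a_2\le\frac{\theta(1+\theta)}{1-\theta}\mu$. For $a_3$, subtracting $\frac1{2n}(\dot p^{\T}\ddot\omega+\dot\omega^{\T}\ddot p)e$ removes the coordinate mean and only shrinks the norm (the argument behind Lemma~\ref{circCom}), so by the triangle inequality and Lemma~\ref{moresize}, $a_3\le\|\dot p\circ\ddot\omega\|+\|\dot\omega\circ\ddot p\|\le\frac{4\sqrt 2(1+\theta)^{3/2}}{(1-\theta)^{2}}n^{3/2}\mu$. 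For $a_4$, the same reasoning together with $\dot\omega\circ\dot p=\dot p\circ\dot\omega$ gives $a_4\le\|\ddot p\circ\ddot\omega\|+\|\dot p\circ\dot\omega\|+a_2$, and Lemmas~\ref{moresize} and \ref{size} yield $a_4\le\big(\frac{2(1+\theta)^{2}}{(1-\theta)^{3}}n^{2}+\frac{1+\theta}{1-\theta}n+\frac{\theta(1+\theta)}{1-\theta}\big)\mu$.

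Then I would substitute $s=\theta/\sqrt n$ term by term. The factor $n^{3/2}$ in the bound on $a_3$ cancels $n^{-3/2}$ exactly, and the $n^{2}$ part of the $a_4$ bound cancels $n^{-2}$; every surviving contribution carries a strictly positive power of $1/n$ (namely $1/\sqrt n$ from the $a_1 s$ term, and $1/n$ or $1/n^{2}$ from the $a_2,a_4$ remainders). Replacing all such factors by $1$ (which is the worst case since $n\ge 1$) and dividing by $\theta\mu>0$, the inequality to be verified becomes the $n$-free polynomial inequality
\[
\theta+\frac{\theta^{2}(1+\theta)}{1-\theta}+\frac{4\sqrt 2(1+\theta)^{3/2}\theta^{2}}{(1-\theta)^{2}}+\Big(\frac{2(1+\theta)^{2}}{(1-\theta)^{3}}+\frac{(1+\theta)^{2}}{1-\theta}\Big)\theta^{3}\le 1 ,
\]
which one checks directly to hold for every $\theta\le 0.22$; at $\theta=0.22$ the left-hand side is already about $0.99$, so the bound is essentially sharp for this argument.

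I expect the last step to be the crux. Because the threshold $0.22$ is nearly attained, the coefficient estimates from Lemmas~\ref{size}, \ref{moresize} and \ref{circCom} must be applied with essentially no slack, and one must be careful to confirm that the worst case over $n$ really is $n=1$, i.e.\ that after the substitution every $n$-dependent term is nonincreasing in $n$ (the dominant terms $a_3s^3$ and $a_4s^4$ being exactly $n$-independent, and the rest decaying). Everything preceding that verification is routine bookkeeping.
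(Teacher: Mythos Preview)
Your proposal is correct and follows essentially the same approach as the paper's own proof: reduce to showing $q(\theta/\sqrt{n})\le 0$ by monotonicity, bound $a_2,a_3,a_4$ via Lemmas~\ref{circCom}, \ref{size}, \ref{moresize}, substitute $\sin(\alpha)=\theta/\sqrt n$, set $n=1$ for the worst case, and verify the resulting one-variable inequality at $\theta=0.22$. Your final scalar inequality is in fact identical to the paper's (the paper's terms $\frac{\theta^3(1+\theta)}{1-\theta}+\frac{\theta^4(1+\theta)}{1-\theta}$ combine to your $\frac{(1+\theta)^2}{1-\theta}\theta^3$), and your numerical check $\approx 0.99$ matches.
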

\begin{proof}
See Appendix A.
\end{proof}

Corollary \ref{uagt1}, Lemmas \ref{theta1}, and \ref{aBound} prove the feasibility 
of searching optimizer along the ellipse. To move the iterate back to ${\cal N}_2(\theta)$, 
we use the direction $(\Delta{x}, \Delta{y}, \Delta{z}, \Delta{\lambda},\Delta{\gamma})$ 
defined by
\begin{equation}
\left[
\begin{array}{ccccc}
H  & 0 & 0 & I & -I \\
I & I & 0 & 0 & 0  \\
I & 0 & -I & 0 & 0 \\
0 & \Lambda(\alpha) & 0 & Y(\alpha) & 0 \\
0 & 0 & \Gamma(\alpha) & 0 & Z(\alpha)
\end{array}
\right]
\left[
\begin{array}{c}
\Delta{x} \\ \Delta{y} \\ \Delta{z} \\ \Delta{\lambda}  \\ \Delta{\gamma}
\end{array}
\right]
=\left[
\begin{array}{c}
0 \\ 0 \\ 0 \\ \mu (\alpha) e - {\lambda}(\alpha) \circ {y}(\alpha) 
\\ \mu (\alpha) e- {\gamma}(\alpha) \circ {z}(\alpha)
\end{array}
\right].
\label{newtondir1}
\end{equation}
and we update $({x}^{k+1}, p^{k+1}, \omega^{k+1})$ and ${\mu}^{k+1}$ by
\begin{equation}
({x}^{k+1}, p^{k+1}, \omega^{k+1})=(x(\alpha), p(\alpha), \omega(\alpha))
+(\Delta x,\Delta p, \Delta \omega),
\label{poly3}
\end{equation}
\begin{equation}
{\mu}^{k+1}=\frac{p^{{k+1}^{\T}}{\omega}^{k+1}}{2n},
\label{poly4}
\end{equation}
where $\Delta p = (\Delta{y}, \Delta{z})$ and $\Delta \omega=(\Delta{\lambda},\Delta{\gamma})$.
Denote $P(\alpha) =\left[ \begin{array}{cc} Y(\alpha) & 0 \\ 0 & Z(\alpha)  \end{array} \right]$, 
$\Omega(\alpha) =\left[ \begin{array}{cc} \Lambda(\alpha) & 0 \\ 0 & \Gamma(\alpha) \end{array} \right]$, 
and $D=P^{\frac{1}{2}}(\alpha)\Omega^{-\frac{1}{2}}(\alpha)$.
Then, the last 2 rows of (\ref{newtondir1}) can be rewritten as
\begin{equation}
P \Delta \omega + \Omega \Delta p = u(\alpha) e -P(\alpha) \Omega(\alpha) e.
\label{pOmega}
\end{equation}
Now, we show that the correction step brings the iterate from 
${\cal N}_2(2\theta)$ back to ${\cal N}_2(\theta)$.

\begin{lemma}
Let $(x(\alpha), p(\alpha), \omega(\alpha)) \in {\cal N}_2(2\theta)$ and
$(\Delta x, \Delta p, \Delta \omega)$ be defined as in (\ref{newtondir1}). 
Let $({x}^{k+1}, p^{k+1}, \omega^{k+1})$ be updated by using (\ref{poly3}). 
Then, for $\theta \le 0.29$ and $\sin(\alpha) \le \sin(\bar{\alpha})$, 
$({x}^{k+1}, p^{k+1}, \omega^{k+1}) \in {\cal N}_2(\theta)$.
\label{back}
\end{lemma}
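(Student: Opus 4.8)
The plan is to treat the correction step (\ref{newtondir1}) as a single Newton step aimed at the perturbed central-path point with parameter $\mu(\alpha)$, starting from the point $(x(\alpha),p(\alpha),\omega(\alpha)) \in {\cal N}_2(2\theta)$ produced by the arc search, and then to run the classical Mizuno--Todd--Ye proximity bookkeeping with the constants tracked carefully. First I would read off the structural consequences of (\ref{newtondir1}): its first three block rows give $H\Delta x = \Delta\gamma-\Delta\lambda$ and $\Delta x=-\Delta y=\Delta z$, so $\Delta p^{\T}\Delta\omega=\Delta x^{\T}H\Delta x\ge 0$ because $H>0$; moreover these three rows have zero right-hand side, so exactly as in Lemma \ref{eqCondition} the updated point $(x^{k+1},p^{k+1},\omega^{k+1})$ from (\ref{poly3}) still satisfies $x^{k+1}+y^{k+1}=e$, $x^{k+1}-z^{k+1}=-e$, and $Hx^{k+1}+c+\lambda^{k+1}-\gamma^{k+1}=0$. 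Next I would rescale (\ref{pOmega}) by $(P(\alpha)\Omega(\alpha))^{-1/2}$ and set $u=D^{-1}\Delta p$, $v=D\Delta\omega$, so that $u+v=(P(\alpha)\Omega(\alpha))^{-1/2}\bigl(\mu(\alpha)e-P(\alpha)\Omega(\alpha)e\bigr)$ and $u^{\T}v=\Delta p^{\T}\Delta\omega\ge 0$. Since $(x(\alpha),p(\alpha),\omega(\alpha))\in{\cal N}_2(2\theta)$ forces $p_i(\alpha)\omega_i(\alpha)\ge(1-2\theta)\mu(\alpha)$ and $\|P(\alpha)\Omega(\alpha)e-\mu(\alpha)e\|\le 2\theta\mu(\alpha)$, this gives $\|u+v\|^2\le \frac{4\theta^2\mu(\alpha)}{1-2\theta}$, and Lemma \ref{wright0} then yields $\|\Delta p\circ\Delta\omega\|=\|u\circ v\|\le 2^{-3/2}\|u+v\|^2\le \frac{\sqrt2\,\theta^2}{1-2\theta}\mu(\alpha)$.

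The second step computes the new complementarity vector exactly. Expanding $p_i^{k+1}\omega_i^{k+1}=(p_i(\alpha)+\Delta p_i)(\omega_i(\alpha)+\Delta\omega_i)$ and substituting $p_i(\alpha)\Delta\omega_i+\omega_i(\alpha)\Delta p_i=\mu(\alpha)-p_i(\alpha)\omega_i(\alpha)$ from (\ref{pOmega}) gives the identity $p^{k+1}\circ\omega^{k+1}=\mu(\alpha)e+\Delta p\circ\Delta\omega$, so by (\ref{poly4}) we get $\mu^{k+1}=\mu(\alpha)+\frac{\Delta p^{\T}\Delta\omega}{2n}\ge\mu(\alpha)$ and $p^{k+1}\circ\omega^{k+1}-\mu^{k+1}e=\Delta p\circ\Delta\omega-\frac{\Delta p^{\T}\Delta\omega}{2n}e$. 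By Lemma \ref{circCom} and the bound from step one, $\|p^{k+1}\circ\omega^{k+1}-\mu^{k+1}e\|\le\|\Delta p\circ\Delta\omega\|\le\frac{\sqrt2\,\theta^2}{1-2\theta}\mu(\alpha)\le\frac{\sqrt2\,\theta^2}{1-2\theta}\mu^{k+1}$, so the proximity requirement $\|p^{k+1}\circ\omega^{k+1}-\mu^{k+1}e\|\le\theta\mu^{k+1}$ in (\ref{n2}) reduces to the scalar inequality $\frac{\sqrt2\,\theta}{1-2\theta}\le 1$, i.e. $\theta\le\frac{1}{2+\sqrt2}\approx 0.2929$, which is precisely where the hypothesis $\theta\le 0.29$ is spent.

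The last step is strict positivity $(p^{k+1},\omega^{k+1})>0$, which is the part needing extra care for box constraints. Here I use the hypothesis $\sin(\alpha)\le\sin(\bar{\alpha})$ together with Corollary \ref{uagt1} to guarantee $\mu(\alpha)>0$, hence $\mu^{k+1}>0$. Parametrizing the correction segment by $t\in[0,1]$ with $p(t)=p(\alpha)+t\Delta p$, $\omega(t)=\omega(\alpha)+t\Delta\omega$, the same expansion gives $p_i(t)\omega_i(t)=(1-t)p_i(\alpha)\omega_i(\alpha)+t\mu(\alpha)+t^2\Delta p_i\Delta\omega_i$; since $|\Delta p_i\Delta\omega_i|\le\|\Delta p\circ\Delta\omega\|\le\frac{\sqrt2\,\theta^2}{1-2\theta}\mu(\alpha)\le\theta\mu(\alpha)$ for $\theta\le 0.29$, dropping the nonnegative term $(1-t)p_i(\alpha)\omega_i(\alpha)$ gives $p_i(t)\omega_i(t)\ge t\mu(\alpha)(1-t\theta)\ge t(1-\theta)\mu(\alpha)>0$ for $t\in(0,1]$, while at $t=0$ it equals $p_i(\alpha)\omega_i(\alpha)>0$; hence no component of $p(t)$ or $\omega(t)$ vanishes on $[0,1]$, and since $(p(\alpha),\omega(\alpha))>0$ continuity forces $(p^{k+1},\omega^{k+1})>0$. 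Combining the equality constraints from step one, positivity from step three, and the proximity bound from step two shows $(x^{k+1},p^{k+1},\omega^{k+1})\in{\cal N}_2(\theta)$. I expect the main obstacle to be purely the constant-tracking in steps one and two: getting the factor $2^{-3/2}$ of Lemma \ref{wright0} to combine with the ${\cal N}_2(2\theta)$ estimates so that the admissible threshold comes out as large as $\frac{1}{2+\sqrt2}$ rather than something smaller; the positivity argument, although it is exactly where the $\sin(\bar{\alpha})$ restriction must be invoked, is then routine.
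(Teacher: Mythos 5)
Your proof is correct and follows essentially the same route as the paper's: rescaling (\ref{pOmega}) by $(P(\alpha)\Omega(\alpha))^{-1/2}$, applying Lemma \ref{wright0} to get $\|\Delta p\circ\Delta\omega\|\le\frac{\sqrt{2}\,\theta^2}{1-2\theta}\mu(\alpha)$, using the exact identity $\mu^{k+1}=\mu(\alpha)+\frac{\Delta p^{\T}\Delta\omega}{2n}$ together with Lemma \ref{circCom} for the proximity bound, and a continuity argument along $t\in[0,1]$ (with $\mu(\alpha)>0$ from Corollary \ref{uagt1}) for positivity. The only differences are cosmetic: you extract the sharp threshold $\theta\le 1/(2+\sqrt{2})$ where the paper numerically checks $\theta=0.29$, and your componentwise positivity estimate $p_i(t)\omega_i(t)\ge t\mu(\alpha)(1-t\theta)$ replaces the paper's bound via $f(t,\theta)$, but both are the same argument.
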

\begin{proof}
See Appendix A.
\end{proof}

Next, we show that the combined step (searching along the arc in
${\cal N}_2(2\theta)$ and moving back to ${\cal N}_2(\theta)$) 
will reduce the duality gap of the iterate, i.e., $\mu^{k+1} < \mu^k$, 
if we select some appropriate $\theta$ and $\alpha$. We introduce the
following two Lemmas before we prove this result.

\begin{lemma}
Let $(x(\alpha), p(\alpha), \omega(\alpha)) \in {\cal N}_2(2\theta)$
and $(\Delta x, \Delta p, \Delta \omega)$ be defined as in (\ref{newtondir1}). 
Then 
\begin{equation}
0 \le \frac{\Delta p^{\T} \Delta \omega }{2n} 
\le \frac{\theta^2 (1+2\theta) }{n(1-2\theta)^2}\mu(\alpha):=\frac{\delta_0}{n}\mu(\alpha).
\label{dineq}
\end{equation}
\label{avg}
\end{lemma}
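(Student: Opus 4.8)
The plan is to handle the two inequalities in (\ref{dineq}) separately, both by the standard interior-point bookkeeping for a centering (correction) step. For the left inequality, I would first read off from (\ref{newtondir1}) that the correction direction satisfies $H\Delta x=\Delta\gamma-\Delta\lambda$, $\Delta y=-\Delta x$, and $\Delta z=\Delta x$, exactly as in (\ref{Hdx})--(\ref{xeqyz}) for the dotted quantities. Consequently
\[
\Delta p^{\T}\Delta\omega=\Delta y^{\T}\Delta\lambda+\Delta z^{\T}\Delta\gamma
=\Delta x^{\T}(\Delta\gamma-\Delta\lambda)=\Delta x^{\T}H\Delta x\ge 0,
\]
since $H>0$ (indeed $H\ge0$ suffices). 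This is the analogue of Lemma \ref{positive} for the correction direction, and it simultaneously supplies the sign condition $\Delta p^{\T}\Delta\omega\ge0$ needed in the next step.

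For the right inequality I would rescale the last two rows of (\ref{newtondir1}), that is, equation (\ref{pOmega}) with right-hand side $\mu(\alpha)e-P(\alpha)\Omega(\alpha)e$, by $(P(\alpha)\Omega(\alpha))^{-\frac{1}{2}}$. With $D=P^{\frac{1}{2}}(\alpha)\Omega^{-\frac{1}{2}}(\alpha)$ already in hand, setting $u=D^{-1}\Delta p$ and $v=D\Delta\omega$ turns this into
\[
u+v=(P(\alpha)\Omega(\alpha))^{-\frac{1}{2}}\bigl(\mu(\alpha)e-P(\alpha)\Omega(\alpha)e\bigr)=:r,\qquad
u^{\T}v=\Delta p^{\T}\Delta\omega\ge0,
\]
where the last equality uses that $D$ is diagonal and the inequality is the left bound just proved. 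Lemma \ref{ineq} (with $w=r$) then gives $\|u\|^2+\|v\|^2\le\|r\|^2$ together with $2u^{\T}v\le\|u\|^2+\|v\|^2$, hence $\Delta p^{\T}\Delta\omega=u^{\T}v\le\frac{1}{4}\|r\|^2$.

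It then remains to bound $\|r\|^2=\sum_i\bigl(\mu(\alpha)-p_i(\alpha)\omega_i(\alpha)\bigr)^2/\bigl(p_i(\alpha)\omega_i(\alpha)\bigr)$, where $p_i(\alpha)\omega_i(\alpha)$ denotes the $i$th diagonal entry of $P(\alpha)\Omega(\alpha)$ and $i$ ranges over $1,\ldots,2n$. This is the one place where the hypothesis $(x(\alpha),p(\alpha),\omega(\alpha))\in{\cal N}_2(2\theta)$ -- rather than ${\cal N}_2(\theta)$ -- is used: it forces each $p_i(\alpha)\omega_i(\alpha)\ge(1-2\theta)\mu(\alpha)>0$ and, summing, $\sum_i(\mu(\alpha)-p_i(\alpha)\omega_i(\alpha))^2=\|p(\alpha)\circ\omega(\alpha)-\mu(\alpha)e\|^2\le 4\theta^2\mu(\alpha)^2$. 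Hence $\|r\|^2\le 4\theta^2\mu(\alpha)/(1-2\theta)$, so $\Delta p^{\T}\Delta\omega\le\theta^2\mu(\alpha)/(1-2\theta)$; dividing by $2n$ and using $\frac{1}{2(1-2\theta)}\le\frac{1+2\theta}{(1-2\theta)^2}$ yields (\ref{dineq}) with $\delta_0=\theta^2(1+2\theta)/(1-2\theta)^2$, in fact with a little room to spare. No step here is genuinely hard; the only things to watch are the dimension bookkeeping ($p,\omega\in\R^{2n}$ while $x\in\R^n$, so $e\in\R^{2n}$ inside $\|r\|^2$) and keeping the $2\theta$ neighborhood throughout the denominator estimate.
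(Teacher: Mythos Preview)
Your argument is correct and in fact slightly cleaner than the paper's own proof. Both proofs establish the left inequality the same way, via $\Delta p^{\T}\Delta\omega=\Delta x^{\T}H\Delta x\ge 0$ (this is the paper's equation (\ref{dptdw})), and both rescale (\ref{pOmega}) by $(P(\alpha)\Omega(\alpha))^{-1/2}$ to reach $u+v=r$ with $u^{\T}v\ge 0$. From there the routes diverge. You bound $u^{\T}v$ directly by $\|r\|^2$ and then estimate $\|r\|^2\le 4\theta^2\mu(\alpha)/(1-2\theta)$ using only the \emph{lower} bound $p_i(\alpha)\omega_i(\alpha)\ge(1-2\theta)\mu(\alpha)$; the resulting estimate $\Delta p^{\T}\Delta\omega/(2n)\le\theta^2\mu(\alpha)/(2n(1-2\theta))$ is strictly sharper than (\ref{dineq}), and you correctly absorb the slack via $\frac{1}{2(1-2\theta)}\le\frac{1+2\theta}{(1-2\theta)^2}$. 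The paper instead first bounds $\|\Delta p/p(\alpha)\|^2+\|\Delta\omega/\omega(\alpha)\|^2$, invokes Lemma~\ref{simple} to control the product of these norms, then applies Cauchy--Schwarz together with the \emph{upper} bound $p_i(\alpha)\omega_i(\alpha)\le(1+2\theta)\mu(\alpha)$; this is where the factor $(1+2\theta)$ in $\delta_0$ originates. Your route avoids Lemma~\ref{simple}, Cauchy--Schwarz, and the upper neighborhood bound entirely.

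One cosmetic remark: the chain ``$\|u\|^2+\|v\|^2\le\|r\|^2$ together with $2u^{\T}v\le\|u\|^2+\|v\|^2$'' literally yields $u^{\T}v\le\tfrac{1}{2}\|r\|^2$, not $\tfrac{1}{4}\|r\|^2$. The sharper constant $\tfrac{1}{4}$ is also valid (add the two inequalities, or just expand $\|r\|^2=\|u\|^2+\|v\|^2+2u^{\T}v\ge 4u^{\T}v$), and either constant suffices for (\ref{dineq}); just make sure the stated implication matches whichever constant you keep.
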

\begin{proof}
See Appendix A.
\end{proof}

\begin{lemma}
Let $(x(\alpha), p(\alpha), \omega(\alpha)) \in {\cal N}_2(2\theta)$
and $(\Delta x, \Delta p, \Delta \omega)$ be defined as in (\ref{newtondir1}). 
Let $(x^{k+1}, p^{k+1}, \omega^{k+1})$ be defined as in (\ref{poly3}).
Then 
\begin{equation}
\mu(\alpha) \le \mu^{k+1}:=\frac{ p^{{k+1}^{\T}} \omega^{k+1} }{2n} \le 
\mu(\alpha)\left( 1+ \frac{\theta^2 (1+2\theta)}{n(1-2\theta)^2} \right) 
=\mu(\alpha)\left(1+\frac{\delta_0}{n}\right) \nonumber
\end{equation}
\label{muk1Inq}
\end{lemma}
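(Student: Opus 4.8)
The plan is to expand $p^{{k+1}^{\T}}\omega^{k+1}$ using the correction update (\ref{poly3}), use the block structure of the linear system (\ref{newtondir1}) to annihilate the cross terms, and then quote Lemma \ref{avg} for the one remaining quadratic term.

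First I would substitute $p^{k+1}=p(\alpha)+\Delta p$ and $\omega^{k+1}=\omega(\alpha)+\Delta\omega$ from (\ref{poly3}) to obtain
\[
p^{{k+1}^{\T}}\omega^{k+1}=p(\alpha)^{\T}\omega(\alpha)+\big(p(\alpha)^{\T}\Delta\omega+\omega(\alpha)^{\T}\Delta p\big)+\Delta p^{\T}\Delta\omega .
\]
The middle parenthesis is the term to control. Taking the inner product of (\ref{pOmega}) with $e$ and recalling that $P(\alpha)$ and $\Omega(\alpha)$ are diagonal with diagonals $p(\alpha)$ and $\omega(\alpha)$, the left side becomes $p(\alpha)^{\T}\Delta\omega+\omega(\alpha)^{\T}\Delta p$ and the right side becomes $2n\mu(\alpha)-p(\alpha)^{\T}\omega(\alpha)$, which is $0$ by the definition (\ref{dualalpha}) of $\mu(\alpha)$. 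Hence the cross term vanishes and
\[
p^{{k+1}^{\T}}\omega^{k+1}=2n\mu(\alpha)+\Delta p^{\T}\Delta\omega ,
\qquad\text{i.e.}\qquad
\mu^{k+1}=\mu(\alpha)+\frac{\Delta p^{\T}\Delta\omega}{2n}.
\]

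It then remains only to invoke Lemma \ref{avg}, which gives $0\le\frac{\Delta p^{\T}\Delta\omega}{2n}\le\frac{\theta^2(1+2\theta)}{n(1-2\theta)^2}\mu(\alpha)=\frac{\delta_0}{n}\mu(\alpha)$; adding $\mu(\alpha)$ to all three sides yields $\mu(\alpha)\le\mu^{k+1}\le\mu(\alpha)\big(1+\frac{\delta_0}{n}\big)$, as claimed. (The lower bound is in any case transparent: the first three block-rows of (\ref{newtondir1}) give $\Delta p=(-\Delta x,\Delta x)$ and $H\Delta x=\Delta\gamma-\Delta\lambda$, so $\Delta p^{\T}\Delta\omega=\Delta x^{\T}H\Delta x\ge0$ since $H>0$.) The only mildly delicate point is the cancellation of the cross term, and even that is routine once one observes that summing the last two block-rows of (\ref{newtondir1}) over all coordinates reproduces exactly $2n\mu(\alpha)-p(\alpha)^{\T}\omega(\alpha)=0$; the substantive estimate has already been absorbed into Lemma \ref{avg}, so no real obstacle remains here beyond careful bookkeeping of the block structure.
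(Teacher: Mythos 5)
Your proposal is correct and follows essentially the same route as the paper: the paper likewise cancels the cross term via the identity $p(\alpha)^{\T}\Delta\omega+\omega(\alpha)^{\T}\Delta p=0$ (its equation (\ref{equal0}), obtained exactly as you do by summing (\ref{pOmega}) over coordinates) and then applies Lemma \ref{avg} to bound $\Delta p^{\T}\Delta\omega/(2n)$ between $0$ and $(\delta_0/n)\mu(\alpha)$. The only difference is that you re-derive the cancellation rather than citing (\ref{equal0}) from the proof of Lemma \ref{back}, which is harmless.
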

\begin{proof}
Using the fact that $p(\alpha)^{\T}\Delta \omega + \omega(\alpha)^{\T}\Delta p=0$
established in (\ref{equal0}) in the proof of Lemma \ref{back}, and
Lemma \ref{avg}, it is therefore straightforward to obtain
\[
\mu(\alpha) \le \frac{p(\alpha)^{\T}\omega(\alpha)}{2n}+\frac{1}{2n}\Delta p^{\T} \Delta \omega
= \frac{(p(\alpha)+\Delta p)^{\T}(\omega(\alpha)+\Delta \omega)}{2n}
= {\mu}^{k+1} \le
\mu(\alpha) + \frac{\theta^2 (1+2\theta)}{n(1-2\theta)^2}\mu(\alpha).
\]
This proves the lemma.
\hfill \qed
\end{proof}

For linear programming, it is known \cite{mty93, yang09} that $\mu^{k+1} = \mu(\alpha)$. 
This claim is not always true for the convex quadratic programming as is pointed 
out in Lemma \ref{muk1Inq}. Therefore, some extra work is needed to make sure that 
the duality gap will be reduced in every iteration. 

\begin{lemma}
For $\theta \le 0.19$, if 
\begin{equation}
\sin(\alpha) = \frac{\theta}{\sqrt{n}},
\label{muImprove}
\end{equation}
then $\mu^{k+1} < \mu^k$. Moreover, for $\sin(\alpha) =\frac{\theta}{\sqrt{n}}=\frac{0.19}{\sqrt{n}}$, 
\begin{equation}
\mu^{k+1} \le \mu^k \left(1-\frac{0.0185}{\sqrt{n}} \right).
\label{uk1leuk}
\end{equation}
\label{ImproveMu}
\end{lemma}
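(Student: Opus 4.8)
The plan is to get a sharp upper bound on $\mu(\alpha)$ from the \emph{exact} formula (\ref{updateMu2}) (not the coarser packaged bound of Lemma \ref{main2}), using the size estimates of Lemmas \ref{size} and \ref{restSize}, and then to feed it into Lemma \ref{muk1Inq}. First I would record that $\sin(\alpha)=\theta/\sqrt n$ is an admissible step: since $\theta\le 0.19\le 0.22$, Lemma \ref{aBound} gives $\sin(\tilde\alpha)\ge\theta/\sqrt n$; since $\theta\le 0.19$, Corollary \ref{uagt1} (with the monotonicity in its proof) gives $\sin(\bar\alpha)\ge 0.6158\ge\theta/\sqrt n$ for every $n\ge 1$; and $\theta\le 0.29$. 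Hence Lemmas \ref{theta1}, \ref{back}, and \ref{muk1Inq} all apply, so $(x^{k+1},p^{k+1},\omega^{k+1})\in{\cal N}_2(\theta)$ and, writing $\delta_0=\frac{\theta^2(1+2\theta)}{(1-2\theta)^2}$,
\[
\mu^{k+1}\le \mu(\alpha)\left(1+\frac{\delta_0}{n}\right).
\]

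Next I would bound $\mu(\alpha)$. Discarding the nonnegative term $\dot{p}^{\T}\dot{\omega}$ in (\ref{updateMu2}) (Lemma \ref{positive}) gives
\[
\mu(\alpha)\le \mu(1-\sin\alpha)+\frac{1}{2n}\left[(\ddot{p}^{\T}\ddot{\omega})(1-\cos\alpha)^2+\left|\dot{p}^{\T}\ddot{\omega}+\ddot{p}^{\T}\dot{\omega}\right|\sin\alpha(1-\cos\alpha)\right].
\]
Using $1-\cos\alpha\le\sin^2\alpha$ (Lemma \ref{sincos}), the bound $\ddot{p}^{\T}\ddot{\omega}\le\delta_2 n^2\mu$ from (\ref{circ1}), and $|\dot{p}^{\T}\ddot{\omega}|,|\ddot{p}^{\T}\dot{\omega}|\le\delta_3 n^{3/2}\mu$ from (\ref{circ2}), the bracket is at most $\delta_2 n^2\mu\sin^4\alpha+2\delta_3 n^{3/2}\mu\sin^3\alpha$; substituting $\sin\alpha=\theta/\sqrt n$ yields
\[
\mu(\alpha)\le \mu\left(1-\frac{\theta}{\sqrt n}+\frac{c_1}{n}\right),\qquad c_1:=\frac{\delta_2\theta^4}{2}+\delta_3\theta^3.
\]

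Finally, since $1-\theta/\sqrt n+c_1/n\ge 1-\theta>0$, I may multiply the two displays to obtain, with $s:=1/\sqrt n\in(0,1]$,
\[
\mu^{k+1}\le \mu\left(1-\frac{\theta}{\sqrt n}+\frac{c_1}{n}\right)\left(1+\frac{\delta_0}{n}\right)=\mu\left(1-\theta s+(c_1+\delta_0)s^2-\theta\delta_0 s^3+c_1\delta_0 s^4\right).
\]
Since $\delta_0,\delta_2,\delta_3$ are increasing in $\theta$ while the leading decrease is $\theta s$, one checks $c_1(1+\delta_0)+\delta_0<\theta$ for all $\theta\le 0.19$, which makes the bracket $<1$ for every $n\ge 1$; this is the assertion $\mu^{k+1}<\mu^k$. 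For $\theta=0.19$ the constants are explicit ($\delta_0\approx 0.130$, $\delta_2\approx 5.33$, $\delta_3\approx 5.60$, so $c_1\approx 0.042$), and a short numerical check shows the above polynomial in $s$ is $\le 1-0.0185\,s$ on $(0,1]$ (the tightest case being $s=1$, i.e. $n=1$), which is exactly (\ref{uk1leuk}).

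The real obstacle is the estimate of $\mu(\alpha)$. The packaged upper bound of Lemma \ref{main2} multiplies $\ddot{x}^{\T}H\ddot{x}=\ddot{p}^{\T}\ddot{\omega}$ — which Lemma \ref{restSize} only controls to $O(n^2\mu)$ — by $(1-\cos\alpha)^2+\sin^2\alpha$, whose dominant piece $\sin^2\alpha=\theta^2/n$ leaves an $O(\mu)$ second-order correction that would swamp the $O(\mu/\sqrt n)$ decrease. One must work from (\ref{updateMu2}) directly, where $\ddot{p}^{\T}\ddot{\omega}$ only ever meets $(1-\cos\alpha)^2=O(1/n^2)$ and the cross term only $\sin\alpha(1-\cos\alpha)=O(1/n^{3/2})$, so the correction genuinely collapses to $O(\mu/n)$. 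The secondary, purely arithmetic difficulty is that for small $n$ the quantity $c_1+\delta_0$ is close to $\theta=0.19$, so the factor $1+\delta_0/n$ must be kept multiplicative rather than bounded additively; it is precisely the hypothesis $\theta\le 0.19$ that keeps $\delta_0,\delta_2,\delta_3$ small enough for the rate $0.0185/\sqrt n$ to survive down to $n=1$.
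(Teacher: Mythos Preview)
Your proposal is correct and follows essentially the same route as the paper: both start from Lemma~\ref{muk1Inq}, use the exact equality for $\mu(\alpha)$ (rather than the packaged upper bound of Lemma~\ref{main2}), discard the nonnegative $\dot p^{\T}\dot\omega$ term, replace $1-\cos\alpha$ by $\sin^2\alpha$, invoke the same size estimates $\delta_2,\delta_3$ from Lemma~\ref{restSize}, substitute $\sin\alpha=\theta/\sqrt n$, and verify numerically at $\theta=0.19$ that the residual $O(1/n)$ terms do not kill the $\theta/\sqrt n$ decrease. Your closing remark about why the last inequality in Lemma~\ref{main2} would be too coarse (the $\sin^2\alpha$ factor against an $O(n^2\mu)$ quantity) is exactly the point, and your packaging via $c_1$ and $s=1/\sqrt n$ is a slightly cleaner bookkeeping of the same algebra the paper carries out.
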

\begin{proof}
See Appendix A.
\end{proof}

\begin{remark}
As we have seen in this section that starting with $(x^0, p^0, \omega^0)$, the 
interior-point algorithm proceeds with finding 
$(x(\alpha), p(\alpha), \omega(\alpha)) \in {\cal N}_2(2\theta)$ and
$(x^{k+1}, p^{k+1}, \omega^{k+1}) \in {\cal N}_2(\theta)$ such that
$\mu^{k+1} < \mu^{k}$. In view of the proofs of Lemmas \ref{theta1}, \ref{back}, 
and \ref{ImproveMu}, the positivity of $(x(\alpha), p(\alpha), \omega(\alpha))>0$ 
and $(x^{k+1}, p^{k+1}, \omega^{k+1})>0$ relies on $\mu(\alpha)>0$ which, according  
to Corollary \ref{uagt1}, is achievable for any $\theta$ and is given by a bound 
in terms of $\bar{\alpha}$. The proximity condition for $(x(\alpha), p(\alpha), \omega(\alpha))$ 
relies on the real positive root of $q(\sin(\alpha))$, denoted by $\sin(\tilde{\alpha})$, which 
is conservatively estimated in Lemma \ref{aBound} under the condition that $\theta \le 0.22$; 
the  proximity condition for $(x^{k+1}, p^{k+1}, \omega^{k+1})$ is established in 
Lemma \ref{back} under the condition that $\theta \le 0.29$. Finally, duality gap 
reduction $\mu^{k+1} < \mu^{k}$ is established in Lemma \ref{ImproveMu} under the 
condition that $\theta \le 0.19$. For all these results to hold, we just need to 
take the smallest bound $\theta=0.19$.  
\label{importantRemark}
\end{remark}
We summarize all the results in this section as the following theorem. 

\begin{theorem}
Let $\theta = 0.19$ and $(x^k, p^k, \omega^k) \in {\cal N}_2(\theta)$.
Then, $(x(\alpha), p(\alpha), \omega(\alpha)) \in {\cal N}_2(2\theta)$;
$(x^{k+1}, p^{k+1}, \omega^{k+1}) \in {\cal N}_2(\theta)$;
and $\mu^{k+1} \le \mu^k \left(1-\frac{0.0185}{\sqrt{n}} \right)$.
\label{main}
\end{theorem}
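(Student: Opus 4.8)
The plan is to assemble Theorem~\ref{main} purely as a bookkeeping exercise that threads together the lemmas already proved in this section, once the single global choice $\theta = 0.19$ has been fixed. First I would recall that, as observed in Remark~\ref{importantRemark}, the three constituent facts we want to conclude each hold under their own ceiling on $\theta$ --- the proximity statement $(x(\alpha),p(\alpha),\omega(\alpha)) \in {\cal N}_2(2\theta)$ needs $\theta \le 0.22$ (Lemma~\ref{theta1}, via the root bound in Lemma~\ref{aBound}), the return statement $(x^{k+1},p^{k+1},\omega^{k+1}) \in {\cal N}_2(\theta)$ needs $\theta \le 0.29$ (Lemma~\ref{back}), and the duality-gap reduction $\mu^{k+1} \le \mu^k(1-0.0185/\sqrt{n})$ needs $\theta \le 0.19$ (Lemma~\ref{ImproveMu}). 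Since $0.19 \le 0.22$ and $0.19 \le 0.29$, the choice $\theta = 0.19$ is admissible for all three, so each lemma applies verbatim.

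Next I would verify that the step-length hypotheses are mutually consistent at $\theta = 0.19$. Lemma~\ref{ImproveMu} dictates the working step $\sin(\alpha) = \theta/\sqrt{n} = 0.19/\sqrt{n}$; I must check this is legal for the earlier lemmas, which require $\sin(\alpha) \le \min\{\sin(\tilde\alpha), \sin(\bar\alpha)\}$. By Lemma~\ref{aBound}, $\sin(\tilde\alpha) \ge \theta/\sqrt{n}$, so $\sin(\alpha) = \theta/\sqrt{n} \le \sin(\tilde\alpha)$; and by Corollary~\ref{uagt1}, $\sin(\bar\alpha) \ge 0.6158 \ge 0.19/\sqrt{n}$ for every $n \ge 1$. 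Hence the prescribed $\alpha$ satisfies the hypotheses of Lemma~\ref{theta1} (so $(x(\alpha),p(\alpha),\omega(\alpha)) \in {\cal N}_2(2\theta)$), of Lemma~\ref{back} (so the correction step lands in ${\cal N}_2(\theta)$), and of Lemma~\ref{ImproveMu} (so the quantitative gap bound holds). Stringing these three conclusions together in order is exactly the statement of the theorem.

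There is no serious mathematical obstacle here; the ``hard part'' is really the consistency audit of the constants --- making sure that the $\theta$-ceilings from the five or six lemmas invoked all sit at or above $0.19$, and that the single step rule $\sin(\alpha)=0.19/\sqrt n$ simultaneously respects $\sin(\alpha)\le\sin(\tilde\alpha)$ and $\sin(\alpha)\le\sin(\bar\alpha)$ uniformly in $n$. Once that audit is done, the proof is a two-line citation chain. I would therefore write the proof as: fix $\theta = 0.19$ and $\sin(\alpha) = \theta/\sqrt{n}$; cite Corollary~\ref{uagt1} and Lemma~\ref{aBound} to certify the step length; apply Lemma~\ref{theta1}, then Lemma~\ref{back}, then Lemma~\ref{ImproveMu}; and read off the three claimed conclusions. \hfill\qed
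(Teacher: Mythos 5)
Your proposal is correct and follows essentially the same route as the paper's own proof: fix $\sin(\alpha)=\theta/\sqrt{n}$, certify it against $\sin(\tilde\alpha)$ and $\sin(\bar\alpha)$ via Lemma~\ref{aBound} and Corollary~\ref{uagt1}, and then chain Lemma~\ref{theta1}, Lemma~\ref{back}, and Lemma~\ref{ImproveMu}. Your explicit audit of the $\theta$-ceilings ($0.19\le 0.22\le 0.29$) is exactly the point made in Remark~\ref{importantRemark}, so nothing is missing.
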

\begin{proof}
From Corollary \ref{uagt1} and Lemma \ref{aBound}, we can select
$\sin(\alpha) \le \min \{ \sin(\tilde{\alpha}), \sin(\bar{\alpha}) \}$. 
Therefore, Lemma \ref{theta1} holds, i.e.,
$(x(\alpha), p(\alpha), \omega(\alpha)) \in {\cal N}_2(2\theta)$.
Since $\sin(\alpha) \le \sin(\bar{\alpha})$ and 
$(x(\alpha), p(\alpha), \omega(\alpha)) \in {\cal N}_2(2\theta)$,
Lemma \ref{back} states
$(x^{k+1}, p^{k+1}, \omega^{k+1}) \in {\cal N}_2(\theta)$. For $\theta=0.19$ and
$\sin(\alpha) = \frac{\theta}{\sqrt{n}}$, Lemma \ref{ImproveMu} states
$\mu^{k+1} \le \mu^k \left(1-\frac{0.0185}{\sqrt{n}} \right)$.
This finishes the proof.
\hfill \qed
\end{proof}

\begin{remark}
It is worthwhile to point out that $\theta = 0.19$ for the box constrained quadratic
optimization problem is larger than the $\theta=0.148$ for linearly constrained quadratic
optimization problem. This makes the searching neighborhood larger and the following 
algorithm more efficient in computation than the algorithm in \cite{yang11a}. 
\end{remark}

We present the proposed method as the following

\begin{algorithm} {\bf (Arc-search path-following)} 
\\*
Data: $H \ge 0$, $c$, $n$, $\theta = 0.19$, $\epsilon>0$, initial point 
$({x}^0, p^0, \omega^0) \in {\cal N}_2(\theta)$,
and ${\mu}^{0}=\frac{p^{{0}^{\T}}{\omega}^{0}}{2n}$.
\newline
{\bf for} iteration $k=1,2,\ldots$
\begin{itemize}
\item[] Step 1: Solve the linear systems of equations (\ref{doty}) 
and (\ref{ddoty}) to get $(\dot{x}, \dot{p}, \dot{\omega})$ and
$(\ddot{x}, \ddot{p}, \ddot{\omega})$. 
\item[] Step 2: 
Let 
$\sin(\alpha) = \frac{\theta}{\sqrt{n}}$.
Update $(x(\alpha), p(\alpha), \omega(\alpha))$ and $\mu(\alpha)$
by (\ref{poly2}) and (\ref{updateMu2}).
\item[] Step 3: Solve (\ref{newtondir1}) to get $(\Delta x,\Delta p,\Delta \omega)$, 
update $(x^{k+1}, p^{k+1}, \omega^{k+1})$ and ${\mu}^{k+1}$ by using (\ref{poly3}) and (\ref{poly4}). 
\item[] Step 4: Set $k+1 \rightarrow k$. Go back to Step 1.
\end{itemize}
{\bf end (for)} 
\label{mainAlgo}
\end{algorithm}

\section{Convergence Analysis }

The first result in this section extends a result of linear programming
(c.f. \cite{wright97}) to convex quadratic programming subject to box constraints. 
\begin{lemma}
Suppose ${\cal F}^o \ne \emptyset$. Then for each $K \ge 0$, the set 
\[
\lbrace (x, p,\omega) \,\,| \,\,(x, p, \omega) \in {\cal F}, \hspace{0.1in} 
p^{\T}\omega \le K 
\rbrace
\]
is bounded.
\label{white}
\end{lemma}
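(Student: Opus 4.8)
The plan is to exploit the box constraints, which already pin down $x$, $y$, and $z$ to a bounded set, and then use the single linear equation $Hx+c+\lambda-\gamma=0$ together with the sign constraints and the bound $p^\T\omega\le K$ to control $\lambda$ and $\gamma$. First I would observe that for any $(x,p,\omega)\in{\cal F}$ we automatically have $-e\le x\le e$ (since $x+y=e$, $x-z=-e$ with $y,z\ge 0$), hence $0\le y\le 2e$ and $0\le z\le 2e$; so $x$, $y$, $z$ live in a compact box independent of $K$. It remains to bound $\lambda$ and $\gamma$.

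The key step is the bound on the multipliers. From $p^\T\omega=\lambda^\T y+\gamma^\T z\le K$ and nonnegativity of every term, each product $\lambda_i y_i$ and $\gamma_i z_i$ is at most $K$. This alone does not bound $\lambda_i$ when $y_i$ is near $0$ (i.e. $x_i$ near $1$), and similarly for $\gamma_i$ when $x_i$ near $-1$; note also that $y_i+z_i=2$ forces at most one of $y_i,z_i$ to be small. The standard device (following the linear-programming argument in \cite{wright97}) is to compare with a fixed strictly feasible point. Pick $(\bar x,\bar p,\bar\omega)=(\bar x,\bar y,\bar z,\bar\lambda,\bar\gamma)\in{\cal F}^o$, which exists by Assumption 1. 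Then $H(x-\bar x)+(\lambda-\bar\lambda)-(\gamma-\bar\gamma)=0$, so that $(\lambda-\bar\lambda)-(\gamma-\bar\gamma)=-H(x-\bar x)$. Taking the inner product with $x-\bar x = -(y-\bar y)=(z-\bar z)$ and regrouping, one gets
\[
(x-\bar x)^\T H(x-\bar x) + (\lambda-\bar\lambda)^\T(y-\bar y) + (\gamma-\bar\gamma)^\T(z-\bar z)=0,
\]
and since $(x-\bar x)^\T H(x-\bar x)\ge 0$ by $H\ge 0$, expanding the cross terms yields
\[
\bar\lambda^\T y+\bar\gamma^\T z+\lambda^\T\bar y+\gamma^\T\bar z \le \lambda^\T y+\gamma^\T z+\bar\lambda^\T\bar y+\bar\gamma^\T\bar z\le K+\bar\lambda^\T\bar y+\bar\gamma^\T\bar z.
\]
All four terms on the left are nonnegative, so in particular $\lambda^\T\bar y+\gamma^\T\bar z\le K+C$ where $C:=\bar\lambda^\T\bar y+\bar\gamma^\T\bar z$ is a fixed constant. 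Because $\bar y>0$ and $\bar z>0$, setting $\xi:=\min_i\{\bar y_i,\bar z_i\}>0$ and using $\lambda,\gamma\ge 0$ gives $\xi\big(\sum_i\lambda_i+\sum_i\gamma_i\big)\le \lambda^\T\bar y+\gamma^\T\bar z\le K+C$, hence $\|\lambda\|\le\|\lambda\|_1\le (K+C)/\xi$ and likewise for $\gamma$.

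Combining: $x$, $y$, $z$ are confined to a fixed box, and $\|\lambda\|,\|\gamma\|\le (K+C)/\xi$, so the whole set is contained in a ball of radius depending only on $K$, $n$, and the fixed point $(\bar x,\bar p,\bar\omega)$; being also closed (it is defined by finitely many linear equalities and non-strict inequalities plus the closed condition $p^\T\omega\le K$), it is compact, in particular bounded. The main obstacle is the multiplier bound — specifically realizing that $p^\T\omega\le K$ by itself is insufficient and that one must bring in a fixed strictly feasible point and the positive-semidefiniteness of $H$ to kill the indefinite cross term; once the displayed identity above is in hand, the rest is routine.
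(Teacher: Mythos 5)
Your proposal is correct and follows essentially the same route as the paper: bound $x$, $y$, $z$ directly from the box structure, then compare with a fixed strictly feasible point and use $H\ge 0$ to bound the multipliers via $\lambda^{\T}\bar y+\gamma^{\T}\bar z\le K+\bar\lambda^{\T}\bar y+\bar\gamma^{\T}\bar z$ and $\xi=\min_i\{\bar y_i,\bar z_i\}>0$. One small slip: the sign of the $(x-\bar x)^{\T}H(x-\bar x)$ term in your displayed identity is off --- it should read $(\lambda-\bar\lambda)^{\T}(y-\bar y)+(\gamma-\bar\gamma)^{\T}(z-\bar z)=(x-\bar x)^{\T}H(x-\bar x)\ge 0$ --- but the inequality you subsequently derive is the correct consequence of the corrected identity, matching the paper's argument.
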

\begin{proof}
First, $x$ is bounded because $-e \le x \le e$.
Since $x+y=e$ and $-e \le x \le e$, we have $0 \le y=e-x \le 2e$.
Since $x-z=-e$, we have $0 \le z=x+e \le 2e$. Therefore, $y$ and $z$ are 
also bounded.
Let $(\bar{x}, \bar{y}, \bar{z}, \bar{\lambda}, \bar{\gamma})$ be
any fixed point in ${\cal F}^o$, and $(x, y, z, \lambda, \gamma)$ be any point in 
${\cal F}$ with $y^{\T} \lambda+ z^{\T} \gamma \le K$. Then
\[
H(\bar{x} - x) + (\bar{\lambda}-\lambda)-(\bar{\gamma}-\gamma)= 0.
\]
Therefore 
\[
(\bar{x} - x)^{\T} H(\bar{x} - x) + (\bar{x} - x)^{\T} (\bar{\lambda}-\lambda)
-(\bar{x} - x)^{\T} (\bar{\gamma}-\gamma)= 0,
\]
or equivalently 
\[
(\bar{x} - x)^{\T} (\bar{\gamma}-\gamma)-(\bar{x} - x)^{\T} (\bar{\lambda}-\lambda)
=(\bar{x} - x)^{\T} H(\bar{x} - x) \ge 0.
\]
Using the relations $x-e=-y$ and $x+e=z$, we have
\[
((\bar{x}+e) - (x+e))^{\T}(\bar{\gamma}-\gamma)-((\bar{x}-e) - (x-e))^{\T}(\bar{\lambda}-\lambda) \ge 0,
\]
or equivalently
\[
(\bar{z} - z)^{\T}(\bar{\gamma}-\gamma)+(\bar{y} - y)^{\T}(\bar{\lambda}-\lambda) \ge 0.
\]
This leads to
\[
\bar{z}^{\T}\bar{\gamma}+{z}^{\T}{\gamma} -z^{\T}\bar{\gamma}-\bar{z}^{\T}\gamma
+\bar{y}^{\T}\bar{\lambda}+y^{\T}{\lambda} -y^{\T}\bar{\lambda}-\bar{y}^{\T}\lambda
\ge 0, 
\]
or in a compact form
\[
\bar{p}^{\T}\bar{\omega}+p^{\T}{\omega} -p^{\T}\bar{\omega}-\bar{p}^{\T} \omega
\ge 0. 
\]
Sine $(\bar{p}, \bar{\omega}) > 0$ is fixed, let 
\[ 
\xi = \min_{i =1,\cdots, n} \hspace{0.1in} \min 
\{ \bar{p}_i, \bar{\omega}_i \}.
\] 
Then, using $p^{\T}{\omega} \le K$,
\[
\bar{p}^{\T}\bar{\omega}+K \ge \xi e^{\T}(p+\omega) \ge \max_{i =1,\cdots, n} 
\max \{\xi p_i,\xi \omega_i\},
\]
i.e., for $i \in \{ 1, \cdots, n \}$,
\[
0 \le p_i \le \frac{1}{\xi}(K+\bar{p}^{\T}\bar{\omega}),
\hspace{0.2in} 0 \le \omega_i \le \frac{1}{\xi}(K+\bar{p}^{\T}\bar{\omega}).
\]
This proves the lemma.
\hfill \qed
\end{proof}

The following theorem is a direct result of Lemmas \ref{white}, 
\ref{eqCondition}, Theorem \ref{main},
KKT conditions, Theorem A.2 in \cite{wright97}.

\begin{theorem}
Suppose that Assumption 1 holds, then the sequence generated by 
Algorithm \ref{mainAlgo} converges to a set of accumulation points, 
and all these accumulation points are global optimal solutions of 
the convex quadratic programming subject to box constraints. 
\label{first}
\end{theorem}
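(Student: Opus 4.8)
The plan is to assemble the statement from the per-iteration guarantees already established, following the standard primal--dual interior-point template. First I would invoke Theorem \ref{main}: with $\theta = 0.19$ and $(x^0, p^0, \omega^0) \in {\cal N}_2(\theta)$, every iterate of Algorithm \ref{mainAlgo} remains in ${\cal N}_2(\theta) \subset {\cal F}^o$, so the whole sequence $\{(x^k, p^k, \omega^k)\}$ is strictly feasible, and $\mu^{k+1} \le \mu^k (1 - 0.0185/\sqrt{n})$. Iterating this contraction yields $\mu^k \le \mu^0 (1 - 0.0185/\sqrt{n})^k \to 0$, which gives the $O(\sqrt{n}\log(1/\epsilon))$ complexity claimed in the abstract and, crucial for this theorem, forces $(p^k)^{\T}\omega^k = 2n\mu^k \to 0$.

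Second, since $\mu^k$ is monotonically decreasing, $(p^k)^{\T}\omega^k \le 2n\mu^0 =: K$ for all $k$, so Lemma \ref{white} applied with this $K$ confines the sequence to a bounded subset of ${\cal F}$. By the Bolzano--Weierstrass theorem the sequence then has at least one accumulation point, and every accumulation point $(x^*, p^*, \omega^*)$ is the limit of some subsequence of iterates.

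Third, I would verify that each such $(x^*, p^*, \omega^*)$ satisfies the full KKT system (\ref{kkt}), hence is optimal. Lemma \ref{eqCondition} guarantees the iterates satisfy the equalities (\ref{kkta})--(\ref{kktb}) exactly, and the sign constraints $(y, z, \lambda, \gamma) \ge 0$ define a closed set into which strict positivity passes in the limit; so $(x^*, p^*, \omega^*) \in {\cal F}$. For complementarity (\ref{kkt-d}), along the convergent subsequence each of $\lambda_i^k y_i^k$ and $\gamma_i^k z_i^k$ is nonnegative and their total sum is $2n\mu^k \to 0$, so every coordinate tends to $0$, giving $\lambda_i^* y_i^* = 0$ and $\gamma_i^* z_i^* = 0$. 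Thus $(x^*, p^*, \omega^*)$ solves (\ref{kkt}); since $H > 0$ makes (QP) convex, the KKT conditions are sufficient for global optimality (as noted right after (\ref{kkt})), so $x^*$ is a global minimizer of (\ref{QP}). Theorem A.2 in \cite{wright97} is what packages the bounded-sequence/limit-point bookkeeping into a clean statement. There is no deep obstacle here; the only points demanding care are that the box structure must not spoil the boundedness argument — precisely why Lemma \ref{white} was proved separately — and that the theorem asserts only a \emph{set} of accumulation points, all lying in the optimal set, so I would not attempt to prove convergence of the entire sequence.
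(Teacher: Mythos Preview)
Your proposal is correct and follows essentially the same route as the paper: the paper's proof is a single sentence citing Lemmas \ref{white} and \ref{eqCondition}, Theorem \ref{main}, the KKT conditions, and Theorem A.2 in \cite{wright97}, and you have simply unpacked those ingredients in the standard way. The only minor imprecision is that Lemma \ref{eqCondition} strictly covers the arc-search update $(x(\alpha),p(\alpha),\omega(\alpha))$, whereas feasibility of $(x^{k+1},p^{k+1},\omega^{k+1})$ after the centering correction also uses the first three block rows of (\ref{newtondir1}); this is immediate, and the paper glosses over it too.
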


Let $(x^*, p^*, \omega^*)$ be any solution of (\ref{ifonlyif}), 
following the notation of \cite{brtt97}, 
we denote index sets ${\cal B}$, ${\cal S}$, and ${\cal T}$ as
\begin{equation}
{\cal B} = \lbrace j\in  \lbrace 1,\ldots,2n  \rbrace \,\, | \,\, 
p_j^* \neq 0 \rbrace.
\end{equation}
\begin{equation}
{\cal S} = \lbrace j\in  \lbrace 1,\ldots,2n  \rbrace \,\, | \,\, 
\omega_j^* \neq 0 \rbrace.
\end{equation}
\begin{equation}
{\cal T} = \lbrace j\in  \lbrace 1,\ldots,2n  \rbrace \,\, | \,\, 
p_j^* =\omega_j^* = 0 \rbrace.
\end{equation}
According to Goldman-Tucker theorem \cite{gt56}, for the linear programming,
${\cal B} \cap {\cal S} = \emptyset={\cal T} $ and 
${\cal B} \cup {\cal S} = \lbrace 1,\ldots,2n  \rbrace $. A solution
with this property is called strictly complementary.
This property has been used in many papers to prove the locally super-linear
convergence of interior-point algorithms in linear programming. However, 
it is pointed out in \cite{gu93} that this partition does not hold for 
general quadratic programming problems. We will show that as long as a 
convex quadratic programming subject to box constraints has strictly complementary 
solution(s), an interior-point algorithm will generate a sequence
to approach strict complementary solution(s). As a matter of fact, 
from Lemma \ref{white}, we can extend the result of \cite[Lemma 5.13]{wright97} 
to the case of convex quadratic programming subject to box constraints, 
and obtain the following lemma which is independent of any algorithm.

\begin{lemma}
Let $\mu^0 >0$, and $\rho \in (0,1)$. Assume that the convex QP (\ref{QP}) has strictly 
complementary solution(s). Then for all points $(x, p, \omega)$ with 
$(x, p, \omega) \in {\cal F}^o$, $p_i\omega_i>\rho \mu$, and $\mu < \mu^0$, 
there are constants $M$, $C_1$, and $C_2$ such that
\begin{equation}
\| (p, \omega) \| \le M,
\label{pwBound}
\end{equation}
\begin{equation}
0<p_i \le \mu/C_1 \hspace{0.1in} (i\in {\cal S}),  \hspace{0.2in} 
0<\omega_i \le \mu/C_1 \hspace{0.1in} (i\in {\cal B}).
\label{idp1}
\end{equation}
\begin{equation}
\omega_i \ge C_2\rho \hspace{0.1in} (i\in {\cal S}),  \hspace{0.2in} 
p_i \ge C_2\rho \hspace{0.1in} (i\in {\cal B}).
\label{idp2}
\end{equation}
\label{preComp}
\end{lemma}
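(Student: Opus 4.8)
The plan is to mimic the argument of \cite[Lemma 5.13]{wright97} for linear programming, with two substitutions: the boundedness of the primal--dual feasible region with bounded duality gap is replaced by Lemma \ref{white}, and strict complementarity of the LP optimal face is replaced by the standing hypothesis that (\ref{QP}) has a strictly complementary solution. So I would first fix, once and for all, a strictly complementary solution $(x^*, p^*, \omega^*)$ of (\ref{ifonlyif}); by definition ${\cal B} \cup {\cal S} = \{1,\ldots,2n\}$, ${\cal B} \cap {\cal S} = \emptyset = {\cal T}$, with $p^*_i > 0$ exactly for $i \in {\cal B}$, $\omega^*_i > 0$ exactly for $i \in {\cal S}$, and $p^*_i \omega^*_i = 0$ for every $i$. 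I would then set $\xi^* = \min\{ \min_{i \in {\cal B}} p^*_i,\ \min_{i \in {\cal S}} \omega^*_i \} > 0$ and propose the constants $C_1 = C_2 := \xi^*/(2n)$.

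For (\ref{pwBound}): any admissible $(x,p,\omega) \in {\cal F}^o$ with $\mu < \mu^0$ satisfies $p^{\T}\omega = 2n\mu < 2n\mu^0$, hence lies in the set $\{(x,p,\omega) \in {\cal F} : p^{\T}\omega \le 2n\mu^0\}$, which Lemma \ref{white} asserts is bounded; taking $M$ to be a bound for $\|(p,\omega)\|$ on that set finishes this part. For the upper bounds (\ref{idp1}), the key observation is that the chain of identities in the proof of Lemma \ref{white} used only $H \ge 0$ and membership in ${\cal F}$ of the two points involved, never strict feasibility of the reference point; so I would re-run that computation with the reference point taken to be the solution $(x^*, p^*, \omega^*) \in {\cal F}$, obtaining $(p^*)^{\T}\omega^* + p^{\T}\omega - p^{\T}\omega^* - (p^*)^{\T}\omega \ge 0$. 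Since $(p^*)^{\T}\omega^* = 0$ by complementarity, this reads $2n\mu = p^{\T}\omega \ge (p^*)^{\T}\omega + p^{\T}\omega^*$, and both summands on the right are sums of nonnegative terms. Because $p^*_i$ vanishes off ${\cal B}$ and $\omega^*_i$ vanishes off ${\cal S}$, dropping one sum and isolating a single index gives $p^*_i \omega_i \le 2n\mu$ for $i \in {\cal B}$ and $p_i \omega^*_i \le 2n\mu$ for $i \in {\cal S}$; dividing by $p^*_i \ge \xi^*$, respectively $\omega^*_i \ge \xi^*$, yields $\omega_i \le \mu/C_1$ ($i \in {\cal B}$) and $p_i \le \mu/C_1$ ($i \in {\cal S}$), and positivity is immediate since $(x,p,\omega) \in {\cal F}^o$.

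Finally, (\ref{idp2}) would follow by combining the hypothesis $p_i\omega_i > \rho\mu$ with (\ref{idp1}): for $i \in {\cal S}$ one gets $\omega_i > \rho\mu/p_i \ge \rho\mu/(\mu/C_1) = C_1\rho$, and symmetrically $p_i > C_1\rho$ for $i \in {\cal B}$, so $C_2 = C_1$ works. The only delicate point is the reuse of the monotonicity/positive-semidefiniteness argument of Lemma \ref{white} with a boundary point of ${\cal F}$ (the solution) as the reference rather than an interior point; I expect that, together with carefully tracking which of ${\cal B}$ and ${\cal S}$ controls which bound, to be the main — though still modest — obstacle. Everything else is bookkeeping with the explicit constants $M$ and $C_1 = C_2 = \xi^*/(2n)$.
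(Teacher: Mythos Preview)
Your proposal is correct and follows essentially the same route as the paper: invoke Lemma \ref{white} with $K = 2n\mu^0$ for (\ref{pwBound}), replay the positive-semidefiniteness inequality $(\bar{p} - p)^{\T}(\bar{\omega} - \omega) \ge 0$ with a strictly complementary solution as the reference point to obtain $2n\mu \ge (p^*)^{\T}\omega + p^{\T}\omega^*$ and hence (\ref{idp1}), then combine $p_i\omega_i > \rho\mu$ with (\ref{idp1}) to get (\ref{idp2}) with $C_2 = C_1$. The only cosmetic difference is that the paper defines $C_1$ via a supremum over all strictly complementary solutions rather than fixing one, but your choice $C_1 = \xi^*/(2n)$ works just as well.
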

\begin{proof}
The first result (\ref{pwBound}) follows immediately from 
Lemma \ref{white} by setting $K=2n\mu^0$. Let $(x^*, p^*, \omega^*)$ 
be any strictly complementary solution. Since $(x^*, p^*, \omega^*)$ and 
$(x, p, \omega)$ are both feasible, we have
\[
(y-y^*)=-(x-x^*)=-(z-z^*), \hspace{0.2in} 
H(x-x^*)+(\lambda-\lambda^*)-(\gamma-\gamma^*)=0.
\]
Therefore,
\begin{equation}
(y-y^*)^{\T}(\lambda-\lambda^*)+(z-z^*)^{\T}(\gamma-\gamma^*)=(x-x^*)^{\T}H(x-x^*) \ge 0.
\label{tmpPos}
\end{equation}
Since $(x^*, y^*, z^*, \lambda^*, \gamma^*)=(x^*, p^*, \omega^*)$ is strictly 
complementary solution, ${\cal T}=\emptyset$, $p_i^*=0$ for $i \in {\cal S}$, 
and $\omega_i^*=0$ for $i \in {\cal B}$. Since $p^{\T}\omega=2n\mu$, 
$(p^*)^{\T}\omega^*=0$, from (\ref{tmpPos}), we have
\begin{eqnarray}
& p^{\T}\omega=y^{\T}\lambda+z^{\T}\gamma+\left( (y^*)^{\T}\lambda^*+(z^*)^{\T}\gamma^* \right)
\ge y^{\T}\lambda^*+z^{\T}\gamma^*+\left( (y^*)^{\T}\lambda+(z^*)^{\T}\gamma \right)
=p^{\T}\omega^*+\omega^{\T}p^*
\nonumber \\
\Longleftrightarrow & 2n \mu \ge p^{\T}\omega^*+\omega^{\T}p^* = 
\sum_{i \in {\cal S}}p_i\omega_i^*+\sum_{i \in {\cal B}}p_i^*\omega_i.
\end{eqnarray}
Since each term in the summations is positive and bounded above by
$2n \mu$, we have for any $i \in {\cal S}$, $\omega_i^* >0$, therefore, 
\[
0 < p_i \le \frac{2n\mu}{\omega_i^*}.
\]
Denote $\Omega_D = \{ (p^*, \omega^*) | \omega_i^*>0 \}$ and 
$\Omega_P = \{ (p^*, \omega^*) | p_i^*>0 \}$, we have
\[
0 < p_i \le \frac{2n\mu}{\sup_{(p^*, \omega^*) \in \Omega_D}\omega_i^*}.
\]
This leads to 
\[
\max_{i \in {\cal S}} p_i \le 
\frac{2n\mu}{\min_{i \in {\cal S}}\sup_{(p^*, \omega^*) \in \Omega_D}\omega_i^*}.
\]
Similarly,
\[
\max_{i \in {\cal B}} \omega_i \le 
\frac{2n\mu}{\min_{i \in {\cal B}}\sup_{(p^*, \omega^*) \in \Omega_P}p_i^*}.
\]
Combining these 2 inequalities gives
\[
\max \{ \max_{i \in {\cal S}} p_i, \max_{i \in {\cal B}} \omega_i \}
\le \frac{2n\mu}{ \min \{\min_{i \in {\cal S}}
\sup_{(p^*, \omega^*) \in \Omega_D}\omega_i^*, 
\min_{i \in {\cal B}}\sup_{(p^*, \omega^*) \in \Omega_P}p_i^* \} }
=\frac{\mu}{C_1}.
\]
This proves (\ref{idp1}). Finally, $p_i \omega_i \ge \rho \mu$, hence for
any $i \in {\cal S}$, 
\[
\omega_i \ge \frac{\rho \mu}{p_i} \ge \frac{\rho \mu}{\mu /C_1}=C_2 \rho.
\]
Similarly, for any $i \in {\cal B}$, 
\[
p_i \ge \frac{\rho \mu}{\omega_i} \ge \frac{\rho \mu}{\mu /C_1}=C_2 \rho.
\]
\hfill \qed
\end{proof}

Lemma \ref{preComp} leads to the following 

\begin{theorem}
Let $(x^k, p^k, \omega^k) \in {\cal N}_2(\theta)$ be generated by Algorithms \ref{mainAlgo}. 
Assume that the convex QP with box constraints has strictly complementary solution(s). Then 
every limit point of the sequence is a strictly complementary solution of the convex 
quadratic programming with box constraints, i.e.,
\begin{equation}
\omega_i^* \ge C_2\rho \hspace{0.1in} (i\in {\cal S}),  \hspace{0.2in} 
p_i^* \ge C_2\rho \hspace{0.1in} (i\in {\cal B}).
\label{strict}
\end{equation}
\label{strictComp}
\end{theorem}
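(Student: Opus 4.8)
The plan is to combine the three ingredients the paper has just assembled: (i) the iterates $(x^k,p^k,\omega^k)$ stay in ${\cal N}_2(\theta)$ with $\theta=0.19$ for every $k$ (Theorem \ref{main}); (ii) the duality gap $\mu^k\to 0$ at the geometric rate $(1-0.0185/\sqrt{n})$ (again Theorem \ref{main}); and (iii) Lemma \ref{preComp}, which gives the uniform lower bounds $\omega_i\ge C_2\rho$ for $i\in{\cal S}$ and $p_i\ge C_2\rho$ for $i\in{\cal B}$, valid at any point of ${\cal F}^o$ satisfying $p_i\omega_i>\rho\mu$ and $\mu<\mu^0$. So the first step is to verify that the sequence produced by Algorithm \ref{mainAlgo} actually satisfies the hypotheses of Lemma \ref{preComp}: feasibility and strict positivity hold because $(x^k,p^k,\omega^k)\in{\cal N}_2(\theta)\subset{\cal F}^o$ (Lemma \ref{eqCondition} plus the positivity arguments of Lemmas \ref{theta1} and \ref{back}); the condition $\mu^k<\mu^0$ is immediate from the monotone decrease in Theorem \ref{main}; and the key pointwise bound $p_i^k\omega_i^k>\rho\mu^k$ follows from membership in ${\cal N}_2(\theta)$, since there $p_i\omega_i\ge(1-\theta)\mu$, so one simply takes $\rho=1-\theta=0.81$ (any $\rho<1-\theta$ works).

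The second step is to pass to a limit point. By Lemma \ref{white} (with $K=2n\mu^0$), the whole sequence lies in a bounded set, so it has accumulation points; let $(x^*,p^*,\omega^*)$ be one of them, along a subsequence $k_j\to\infty$. Theorem \ref{first} already tells us this limit point is a global optimal solution, i.e.\ it satisfies the KKT system (\ref{ifonlyif})/(\ref{kkt}); in particular $p^*\circ\omega^*=0$ because $\mu^{k_j}\to 0$ forces $p_i^{k_j}\omega_i^{k_j}\to 0$ for every $i$. Thus the index sets ${\cal B},{\cal S},{\cal T}$ defined relative to this particular $(x^*,p^*,\omega^*)$ make sense; we must show ${\cal T}=\emptyset$ and that the bounds (\ref{strict}) hold. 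The hypothesis of the theorem — that the QP has \emph{some} strictly complementary solution — is exactly what powers Lemma \ref{preComp}: note that the constants $C_1,C_2$ in that lemma are defined through suprema over \emph{all} strictly complementary solutions, not through the particular limit point, so they are fixed positive numbers independent of $j$.

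The third step is the limit argument itself. Applying Lemma \ref{preComp} at each $(x^{k_j},p^{k_j},\omega^{k_j})$ with $\rho=1-\theta$ gives $\omega_i^{k_j}\ge C_2\rho$ for all $i\in{\cal S}$ and $p_i^{k_j}\ge C_2\rho$ for all $i\in{\cal B}$, uniformly in $j$. Letting $j\to\infty$ and using continuity, $\omega_i^*\ge C_2\rho$ for $i\in{\cal S}$ and $p_i^*\ge C_2\rho$ for $i\in{\cal B}$, which is precisely (\ref{strict}). Together with $p^*\circ\omega^*=0$ this forces ${\cal T}=\emptyset$: if some index $j_0$ had $p_{j_0}^*=\omega_{j_0}^*=0$ then $j_0\notin{\cal B}$ and $j_0\notin{\cal S}$, but ${\cal B}\cup{\cal S}$ must be all of $\{1,\dots,2n\}$ once (\ref{strict}) holds with these uniform bounds — one checks that an index outside both sets would have to have $p_i^{k_j}\to 0$ and $\omega_i^{k_j}\to 0$, contradicting the lower bound on whichever of the two the index belongs to; hence the limit point is strictly complementary. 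The main obstacle, and the only place real care is needed, is bookkeeping around the index sets: Lemma \ref{preComp} is phrased with ${\cal B},{\cal S}$ coming from a fixed strictly complementary solution, whereas the theorem's ${\cal B},{\cal S},{\cal T}$ are attached to the limit point, so one must argue that these coincide (which they do, precisely because the existence of a strictly complementary solution makes the optimal partition unique among optimal solutions that are strictly complementary). Everything else is a routine application of boundedness, continuity, and the already-established geometric convergence of $\mu^k$.
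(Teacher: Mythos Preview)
Your proposal is correct and follows essentially the same route as the paper: verify that the iterates satisfy the hypotheses of Lemma \ref{preComp} (feasibility, strict positivity, $\mu^k<\mu^0$, and $p_i^k\omega_i^k\ge\rho\mu^k$ from membership in ${\cal N}_2(\theta)$), invoke that lemma to obtain the uniform lower bounds $\omega_i^k\ge C_2\rho$ on ${\cal S}$ and $p_i^k\ge C_2\rho$ on ${\cal B}$, and pass to the limit along a convergent subsequence (boundedness coming from Lemma \ref{white}/\ref{preComp}). The paper's own proof is terser and takes $\rho=1-3\theta$ rather than your $\rho=1-\theta$, and it does not spell out the index-set bookkeeping you worry about, but the substance is the same.
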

\begin{proof}
From Lemma \ref{preComp}, $(p^k, \omega^k)$ is bounded, therefore there is 
at least one limit point $(p^*, \omega^*)$. Since $(p_i^k, \omega_i^k)$ is in the
neighborhood of the central path, i.e., 
$p_i^k \omega_i^k > \rho \mu^k := {(1-3 \theta)} \mu^k$,
\[
\omega_i^k \ge C_2\rho \hspace{0.1in} (i\in {\cal S}),  \hspace{0.2in} 
p_i^k \ge C_2\rho \hspace{0.1in} (i\in {\cal B}),
\]
every limit point will meet (\ref{strict}) due to the fact that 
$C_2\rho$ is a constant.
\hfill \qed
\end{proof} 

We now show that the complexity bound of Algorithm~\ref{mainAlgo} 
is $O(\sqrt{n}\log(1/\epsilon))$. We need the following theorem from 
\cite{wright97} for this purpose.

\begin{theorem}
Let $\epsilon \in (0,1)$ be given. Suppose that an algorithm for solving 
(\ref{ifonlyif}) generates a sequence of iterations that satisfies
\begin{equation}
\mu^{k+1} \le \left( 1 - \frac{\delta}{n^{\chi}} \right) \mu^k, 
\hspace{0.1in} k=0, 1, 2, \ldots,
\end{equation}
for some positive constants $\delta$ and $\chi$. Suppose that the starting 
point $(x^0, p^0, \omega^0)$ satisfies $\mu^0 \le 1/\epsilon$. Then there 
exists an index $K$ with
\[
K=O(n^{\chi}\log({1}/{\epsilon}))
\]
such that 
\[
\mu^k \le \epsilon \hspace{0.1in} {\rm for} \hspace{0.1in} \forall k \ge K.
\]
\label{wright}
\end{theorem}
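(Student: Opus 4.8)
The plan is to unroll the linear recursion and then apply a standard logarithmic estimate, exactly as in the corresponding lemma of \cite{wright97}; no property of Algorithm \ref{mainAlgo} beyond the stated per-iteration decrease is needed. First I would iterate the hypothesis $\mu^{k+1}\le\bigl(1-\delta/n^{\chi}\bigr)\mu^{k}$ to get $\mu^{k}\le\bigl(1-\delta/n^{\chi}\bigr)^{k}\mu^{0}$, and then use $\mu^{0}\le 1/\epsilon$ to obtain $\mu^{k}\le\bigl(1-\delta/n^{\chi}\bigr)^{k}/\epsilon$. Since $0<1-\delta/n^{\chi}<1$ (which holds once $n$ is large enough that $\delta/n^{\chi}<1$; the finitely many small-$n$ cases are absorbed into the $O(\cdot)$ constant), the quantity $\bigl(1-\delta/n^{\chi}\bigr)^{k}$ is decreasing in $k$. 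Hence it suffices to exhibit an index $K$ with $\bigl(1-\delta/n^{\chi}\bigr)^{K}\le\epsilon^{2}$, because then for every $k\ge K$ we get $\mu^{k}\le\bigl(1-\delta/n^{\chi}\bigr)^{k}/\epsilon\le\bigl(1-\delta/n^{\chi}\bigr)^{K}/\epsilon\le\epsilon$.

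Next I would take logarithms. Because $\epsilon\in(0,1)$, one has $\log\epsilon^{2}=-2\log(1/\epsilon)<0$, and $\bigl(1-\delta/n^{\chi}\bigr)^{K}\le\epsilon^{2}$ is equivalent to $K\log\!\bigl(1/(1-\delta/n^{\chi})\bigr)\ge 2\log(1/\epsilon)$. The one nonroutine ingredient is the elementary bound $\log(1-t)\le -t$ for $t\in[0,1)$, equivalently $\log\!\bigl(1/(1-t)\bigr)\ge t$; applying it with $t=\delta/n^{\chi}$ shows that any $K\ge\frac{2n^{\chi}}{\delta}\log(1/\epsilon)$ satisfies $K\log\!\bigl(1/(1-\delta/n^{\chi})\bigr)\ge K\,\delta/n^{\chi}\ge 2\log(1/\epsilon)$, as required. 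Choosing $K=\bigl\lceil\frac{2n^{\chi}}{\delta}\log(1/\epsilon)\bigr\rceil$ then finishes the proof, and this $K$ is visibly $O\!\bigl(n^{\chi}\log(1/\epsilon)\bigr)$.

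There is no real obstacle here: the only place demanding care is the sign bookkeeping when dividing the log-inequality by $\log(1-\delta/n^{\chi})<0$, together with the estimate $\log\!\bigl(1/(1-t)\bigr)\ge t$; everything else is a direct computation. I would close by noting that Theorem \ref{main} furnishes precisely the hypothesis of this theorem with $\delta=0.0185$ and $\chi=\tfrac12$, so the immediate corollary is that Algorithm \ref{mainAlgo}, started from $(x^{0},p^{0},\omega^{0})\in{\cal N}_{2}(\theta)$ with $\mu^{0}\le 1/\epsilon$, reduces the duality gap below $\epsilon$ within $O(\sqrt{n}\log(1/\epsilon))$ iterations, which is the advertised best-known complexity bound for convex quadratic programming.
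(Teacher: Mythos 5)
Your argument is correct and is exactly the standard proof of this result: the paper itself gives no proof, importing the theorem verbatim from \cite{wright97} (Theorem 3.2 there), and your unrolling of the recursion, the reduction to $\bigl(1-\delta/n^{\chi}\bigr)^{K}\le\epsilon^{2}$, and the use of $\log(1-t)\le -t$ reproduce that reference's argument faithfully. The only cosmetic remark is that when $\delta/n^{\chi}\ge 1$ the hypothesis forces $\mu^{1}\le 0\le\epsilon$ outright, so that degenerate case needs no absorption into the $O(\cdot)$ constant; otherwise there is nothing to add.
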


Combining Lemma \ref{ImproveMu} and Theorems \ref{wright} gives

\begin{theorem}
The complexity of Algorithm~\ref{mainAlgo} is bounded by
$O(\sqrt{n}\log({1}/{\epsilon}))$. 
\label{complexity3}
\label{complexity1}
\end{theorem}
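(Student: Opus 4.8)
The plan is to obtain the result as a direct corollary of the contraction estimate already established and the generic complexity lemma quoted from \cite{wright97}. Concretely, Lemma \ref{ImproveMu} shows that, with the fixed choice $\theta = 0.19$ and $\sin(\alpha) = \theta/\sqrt{n}$ used in Step~2 of Algorithm \ref{mainAlgo}, every iteration satisfies
\[
\mu^{k+1} \le \left( 1 - \frac{0.0185}{\sqrt{n}} \right)\mu^k, \qquad k = 0,1,2,\ldots
\]
This is exactly the hypothesis of Theorem \ref{wright} with the constants $\delta = 0.0185$ and $\chi = \tfrac{1}{2}$.

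Next I would verify the remaining hypothesis of Theorem \ref{wright}, namely that the starting point satisfies $\mu^0 \le 1/\epsilon$. For the box-constrained problem (\ref{QP}) an explicit strictly feasible point is available (the point exhibited in Section 7); for the natural choice $x^0 = 0$, $y^0 = z^0 = e$, $\lambda^0 = \gamma^0 = e$ one has $\mu^0 = 1$, and since $\epsilon \in (0,1)$ this gives $\mu^0 = 1 \le 1/\epsilon$. If a different initial point with a larger duality gap is preferred, a trivial rescaling of $H$ and $c$ restores $\mu^0 \le 1/\epsilon$ without affecting the argument. Moreover, by Theorem \ref{main} the iterates remain in ${\cal N}_2(\theta) \subset {\cal F}^o$ for every $k$, so the sequence is well defined and the contraction above applies at each step.

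With both hypotheses in place, Theorem \ref{wright} yields an index $K = O(n^{1/2}\log(1/\epsilon)) = O(\sqrt{n}\log(1/\epsilon))$ with $\mu^k \le \epsilon$ for all $k \ge K$. Since each iteration of Algorithm \ref{mainAlgo} performs only a bounded number of operations polynomial in $n$ — solving the two linear systems (\ref{doty}) and (\ref{ddoty}), which share the same coefficient matrix and reduce to an $n\times n$ system, solving the quartic (\ref{alpha1}) in closed form via \cite{evans94}, solving the correction system (\ref{newtondir1}), and $O(n)$ Hadamard-product and norm evaluations — the total arithmetic cost is polynomial, and the stated iteration-complexity bound $O(\sqrt{n}\log(1/\epsilon))$ follows.

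There is essentially no hard step: the entire content of the theorem is packaged in Lemma \ref{ImproveMu} and Theorem \ref{main}, and the only point requiring a word of care is the normalization $\mu^0 \le 1/\epsilon$, which is immediate once the explicit initial interior point of Section 7 is used. I would therefore keep the proof to the short computation above.
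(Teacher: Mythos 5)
Your proposal is correct and follows essentially the same route as the paper, whose proof of this theorem is literally the one-line combination of Lemma \ref{ImproveMu} (giving the contraction factor $1-0.0185/\sqrt{n}$, i.e.\ $\delta=0.0185$, $\chi=\tfrac{1}{2}$) with Theorem \ref{wright}. One small slip: the point $x^0=0$, $y^0=z^0=e$, $\lambda^0=\gamma^0=e$ you suggest is \emph{not} feasible in general, since the equality constraint $Hx^0+c+\lambda^0-\gamma^0=0$ reduces to $c=0$; the paper's actual initial point (\ref{initialPoint}) has $\mu^0=4(1+\|c\|^2)$, so the hypothesis $\mu^0\le 1/\epsilon$ is handled only by the standard normalization convention (your rescaling remark), which the paper itself leaves implicit.
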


\section{Implementation Issues}

Algorithm \ref{mainAlgo} is presented in a form that is convenient for the
convergence analysis. Some implementation details that make the algorithm
effective and efficient are discussed in this section.

\subsection{Termination criterion}

Algorithm \ref{mainAlgo} needs a termination criterion in real
implementation. One can use
\begin{subequations}
\begin{align}
{\mu}^k \le {\epsilon}, \\
\| r_X \| = \| Hx^k+\lambda^k-\gamma^k+c \| \le \epsilon, \\
\| r_Y \| = \| x^k+y^k-e \| \le \epsilon, \\
\| r_Z \| = \| x^k-z^k+e \| \le \epsilon, \\
\| r_t \| = \| P^k\Omega^ke-\mu e \| \le \epsilon, \\
(p^k, \omega^k)>0.
\end{align}
\label{criteria1}
\end{subequations}
An alternate criterion is similar to the one used in {\tt linprog} \cite{zhang96}
\begin{equation}
\kappa := \frac{\|r_Y\|+\| r_Z \|}{2n }
+\frac{\|r_X\|}{\max \lbrace 1, \| c\|  \rbrace }
+\frac{ \mu^k }{\max \lbrace 1, \| x^{k^{\T}}Hx^k+c^{\T}x^k \|  \rbrace } 
\le  \epsilon.
\label{criteria2}
\end{equation} 

\subsection{Initial $(x^0, \lambda^0, s^0) \in {\cal N}_2(\theta)$}

For feasible interior-point algorithms, an important prerequisite is to start with
a feasible interior point. While finding an initial feasible point may not be a simple 
and trivial task for even linear programming with equality constraints \cite{cg06},
for quadratic programming subject to box constraints, finding the initial point 
is not an issue. 
We show that the following initial point $(x^0, y^0, z^0, \lambda^0, \gamma^0)$ is an 
interior point, moreover $(x^0, y^0, z^0, \lambda^0, \gamma^0) \in {\cal N}_2(\theta)$.
\begin{subequations} 
\begin{align}
x^0=0, \hspace{0.1in} y^0=z^0=e>0, \\
\lambda_i^0 = 4(1+ \|c\|^2)-\frac{c_i}{2} >0, 
\\
 \gamma_i^0 = 4(1+ \|c\|^2)+\frac{c_i}{2} >0.
\end{align}
\label{initialPoint}
\end{subequations}
It is easy to see that this selected point meets (\ref{sFeasible}). Therefore, we will
show that it meets (\ref{n2}). Since
\begin{equation}
\mu^0 = \frac{\sum_{i=1}^{n}\left( \lambda_i^0+\gamma_i^0 \right)}{2n}
= \frac{\sum_{i=1}^{n}\left( 8(1+ \|c\|^2) \right)}{2n}=4(1+ \|c\|^2),
\label{initialMu}
\end{equation}
we have, for $\theta=0.19$, 
\[
\Bigl\rVert p^0 \circ \omega^0 - \mu^0 e \Bigr\rVert^2 
= \sum_{i=1}^n (\lambda_i^0-\mu^0)^2+\sum_{i=1}^n (\gamma_i^0-\mu^0)^2
=\frac{ \| c \|^2}{2} \le 16\theta^2 (1+ \|c\|^2)^2 = \theta^2 (\mu^0)^2.
\]

%

\subsection{Step size}

Directly using $\sin(\alpha) = \frac{\theta}{\sqrt{n}}$ in 
Algorithm \ref{mainAlgo} provides an effective formula to 
prove the polynomiality. However, this choice of $\sin(\alpha)$
is too conservative in practice because this search step in 
${\cal N}_2(2\theta)$ is too small and the speed of duality gap reduction
is slow. A better choice of $\sin(\alpha)$ should have a larger
step in every iteration so that the polynomiality is reserved and
fast convergence is achieved.
In view of Remark \ref{importantRemark}, conditions that restrict step size are
positivity conditions, proximity conditions, and duality reduction 
condition. We examine how to enlarge the step size under these restrictions.

First, from (\ref{pos1}) and (\ref{pos2}), $\mu(\alpha) >0$ 
is required for positivity conditions $(p(\alpha), \omega(\alpha))>0$
and $(p^{k+1}, \omega^{k+1})>0$ to hold. Since $\sin(\bar{\alpha})$ estimated
in Corollary \ref{uagt1} is conservative, we find a better $\bar{\alpha}$
directly from (\ref{updatedU}).
\begin{equation}
\mu(\alpha) \ge \mu(1-\sin({\alpha})) -\frac{1}{2n}
(\dot{p}^{\T} \dot{\omega}) \Big( \sin^4(\alpha) +\sin^2(\alpha) \Big) := f(\sin(\alpha))=\sigma,
\label{quartic}
\end{equation}
where $\sigma >0$ is a small number, and $f(\sin(\alpha))$ is a monotonic decreasing function
of $\sin(\alpha)$ with $f(\sin(0))=1$ and $f(\sin(\frac{\pi}{2}))<0$. Therefore, (\ref{quartic}) 
has a unique positive real solution for $\alpha \in [0, \frac{\pi}{2}]$
Since (\ref{quartic}) is a quartic function of $\sin(\alpha)$, the cost of 
finding the smallest positive solution is negligible \cite{evans94}.

Second, for $\theta \le 0.19$, from (\ref{theta}), the proximity condition for
$(x^{k+1}, y^{k+1}, z^{k+1}, \lambda^{k+1}, \gamma^{k+1})$ holds
without further restriction. The proximity condition (\ref{2t}) is met for 
$\sin(\alpha) \in [0, \sin(\tilde{\alpha})]$, where $\sin(\tilde{\alpha})$ 
is the smallest positive solution of (\ref{alpha1}) and it is estimated
very conservatively in Lemma \ref{aBound}. An efficient implementation should use
$\sin(\tilde{\alpha})$, the smallest positive solution of (\ref{alpha1}). 
Actually, there exist a $\acute{\alpha}$ which is normally larger than $\tilde{\alpha}$
such that the proximity condition (\ref{2t}) is met for 
$\sin(\alpha) \in [0, \sin(\acute{\alpha})]$. Let
\[
b_0=-\theta \mu <0,
\]
\[
b_1= \theta \mu >0,
\]
\[
b_3= \Big\lVert \dot{p} \circ \ddot{\omega}+\dot{\omega} \circ \ddot{p} 
-\frac{1}{2n}(\dot{p}^{\T}\ddot{\omega}+\dot{\omega}^{\T}\ddot{p})e \Big\rVert 
+\frac{\theta}{n} \left( \dot{p}^{\T} \ddot{\omega} +  \ddot{p}^{\T} \dot{\omega} \right),
\]
\[
b_4= \Big\lVert \ddot{p} \circ \ddot{\omega}-\dot{\omega} \circ \dot{p}
-\frac{1}{2n}(\ddot{p}^{\T}\ddot{\omega}-\dot{\omega}^{\T}\dot{p})e \Big\rVert
-\frac{\theta}{n} \left( \ddot{p}^{\T} \ddot{\omega} - \dot{p}^{\T} \dot{\omega} \right),
\]
and 
\begin{equation}
p(\alpha):=b_4(1-\cos(\alpha))^2+b_3\sin(\alpha)(1-\cos(\alpha))+b_1\sin(\alpha)+b_0.
\end{equation}
Applying the second inequality of (\ref{quad1}) to 
$\frac{\theta}{n} \left( \dot{p}^{\T} \ddot{\omega} +  \ddot{p}^{\T} \dot{\omega} \right)\sin(\alpha)(1-\cos(\alpha))$,
we can easily show that
\[ p(\alpha) \le q(\alpha), \]
where $q(\alpha)$ is defined in (\ref{alpha1}). Therefore, the smallest positive solution $\grave{\alpha}$
of $p(\alpha)$ is larger than the smallest positive solution $\tilde{\alpha}$ of $q(\alpha)$. We will 
show that for $\sin(\alpha) \in [0, \sin(\grave{\alpha})]$, the proximity condition (\ref{2t}) holds.
Since for $\sin(\alpha) \in [0, \sin(\grave{\alpha})]$, $p(\alpha) \le 0$, we have
\begin{eqnarray}
\Big\lVert \ddot{p} \circ \ddot{\omega}-\dot{\omega} \circ \dot{p}
-\frac{1}{2n}(\ddot{p}^{\T}\ddot{\omega}-\dot{\omega}^{\T}\dot{p})e \Big\rVert (1-\cos(\alpha))^2
+\Big\lVert \dot{p} \circ \ddot{\omega}+\dot{\omega} \circ \ddot{p} 
-\frac{1}{2n}(\dot{p}^{\T}\ddot{\omega}+\dot{\omega}^{\T}\ddot{p})e \Big\rVert \sin(\alpha)(1-\cos(\alpha))
\nonumber \\
\le  (2\theta) \left( \frac{1}{2n}
\left( \ddot{p}^{\T} \ddot{\omega} - \dot{p}^{\T} \dot{\omega} \right)(1-\cos(\alpha))^2
-\frac{1}{2n} \left( \dot{p}^{\T} \ddot{\omega} +  \ddot{p}^{\T} \dot{\omega} \right)\sin(\alpha)(1-\cos(\alpha))
\right) -\theta \mu(1-\sin(\alpha)).
\end{eqnarray}
Substituting this inequality into (\ref{implement}) gives
\begin{eqnarray}
& & \Big\lVert p(\alpha) \circ \omega(\alpha) - \mu(\alpha) e \Big\rVert \nonumber \\
& \le &
2\theta \Big( \mu (1-\sin({\alpha}))+\frac{1}{2n}
   \left(\ddot{x}^{\T}(\ddot{\gamma}-\ddot{\lambda})-\dot{x}^{\T}(\dot{\gamma}-\dot{\lambda}) \right)
   (1-\cos({\alpha}))^2 
\nonumber \\
&  - & \frac{1}{2n} \left(\dot{x}^{\T}(\ddot{\gamma}-\ddot{\lambda})+\ddot{x}^{\T}(\dot{\gamma}-\dot{\lambda}) 
   \right)\sin({\alpha}) (1-\cos({\alpha}))  \Big) = 2\theta \mu(\alpha).
\end{eqnarray}
This is the proximity condition for $(x(\alpha), y(\alpha), z(\alpha), \lambda(\alpha), \gamma(\alpha))$.
Denote $\hat{b}_0=b_0$, $\hat{b}_1=b_1$, 
\[
\hat{b}_3=\left\{ \begin{array}{rl}
b_3 & \text{if } b_3 \ge 0,\\
0 & \text{if }  b_3 < 0,
\end{array} \right.
\hspace{0.2in}
\hat{b}_4=\left\{ \begin{array}{rl}
b_4 & \text{if } b_4 \ge 0,\\
0 & \text{if }  b_4 < 0,
\end{array} \right.
\]
and 
\begin{equation}
\hat{p}(\alpha):=\hat{b}_4(1-\cos(\alpha))^2+\hat{b}_3\sin(\alpha)(1-\cos(\alpha))+\hat{b}_1\sin(\alpha)+\hat{b}_0.
\label{phat}
\end{equation}
Since $\hat{p}(\alpha) \ge {p}(\alpha)$, the smallest positive solution $\acute{\alpha}$ of $\hat{p}(\alpha)$
is smaller than smallest positive solution $\grave{\alpha}$ of ${p}(\alpha)$.
To estimate the smallest solution of $\acute{\alpha}$, by noticing that $\hat{p}(\alpha)$ is
a monotonic increasing function of $\alpha$ and $\hat{p}(0)=-\theta \mu <0$, 
we can simply use the bisection method. The computational cost is 
impendent of the problem size $n$ and is negligible. Since both estimated step sizes $\acute{\alpha}$
and $\tilde{\alpha}$ guarantee the proximity condition for 
$(x(\alpha), y(\alpha), z(\alpha), \lambda(\alpha), \gamma(\alpha))$ to hold, we select 
$\check{\alpha}=\max \{ \acute{\alpha}, \tilde{\alpha} \} \ge \tilde{\alpha}$
which guarantees the polynomiality claim to hold.

Third, from (\ref{a}) and Lemma \ref{main2}, we have
\[
\mu^{k+1} \le \mu^{k} \left( 1+
\frac{\theta^2(1+2\theta)}{n(1-2\theta)^2}
-\left( 1+\frac{\theta^2(1+2\theta)}{n(1-2\theta)^2} \right)\sin(\alpha)
+\left( 1+\frac{\theta^2(1+2\theta)}{n(1-2\theta)^2} \right)
\frac{\ddot{p}^{\T}\ddot{\omega}}{2n \mu}
\left( \sin^2(\alpha)+\sin^4(\alpha) \right)
\right).
\]
For $\mu^{k+1} \le \mu^{k}$ to hold, we need
\begin{equation}
\frac{\theta^2(1+2\theta)}{n(1-2\theta)^2}
-\left( 1+\frac{\theta^2(1+2\theta)}{n(1-2\theta)^2} \right)\sin(\alpha)
+\left( 1+\frac{\theta^2(1+2\theta)}{n(1-2\theta)^2} \right)
\frac{\ddot{p}^{\T}\ddot{\omega}}{2n \mu}
\left( \sin^2(\alpha)+\sin^4(\alpha) \right) 
\le 0.
\nonumber
\end{equation}
For the sake of convenience in convergence analysis, a conservative estimate 
is used in Lemma \ref{ImproveMu}. 
For efficient implementation, the following solution should be adopted.
Denote $u=\frac{\theta^2(1+2\theta)}{n(1-2\theta)^2}>0$, 
$v=\frac{\ddot{p}^{\T}\ddot{\omega}}{2n \mu}>0$, 
$z=\sin(\alpha) \in [0,1]$, and
\[
F(z) = (1+u)vz^4+(1+u)vz^2-(1+u)z+u.
\]
For $z \in [0,1]$ and $v \le \frac{1}{6}$, $F'(z)=(1+u)(4vz^3+2vz-1)\le 0$, therefore,
the upper bound of the duality gap is a monotonic decreasing function of $\sin(\alpha)$
for $\alpha \in [0, \frac{\pi}{2}]$. The larger $\alpha$ is, the smaller the
upper bound of the duality gap will be. For $v > \frac{1}{6}$, to minimize the upper
bound of the duality gap, we can find the solution of $F'(z) =0$. 
It is easy to check from discriminator \cite{poly07} that the cubic 
polynomial $F'(z)$ has only one real solution which is given by (see Lemma \ref{cubic})
\[
\sin(\breve{\alpha})=\sqrt[3]{\frac{n \mu}{4\ddot{p}^{\T}\ddot{\omega}}
+\sqrt{\left( \frac{n \mu}{4\ddot{p}^{\T}\ddot{\omega}} \right)^2
+\left(\frac{1}{6}\right)^3}}
+\sqrt[3]{\frac{n \mu}{4\ddot{p}^{\T}\ddot{\omega}}
-\sqrt{\left( \frac{n \mu}{4\ddot{p}^{\T}\ddot{\omega}} \right)^2
+\left(\frac{1}{6}\right)^3}}.
\]
Since $F''(\sin(\breve{\alpha})=(1+u)(12v\sin^2(\breve{\alpha})+2v) > 0$, at
$\sin(\breve{\alpha}) \in [0,1)$, the upper bound of the duality gap is minimized.
Therefore, we can define
\begin{equation}
\breve{\alpha}= \left\{ \begin{array}{ll}
\frac{\pi}{2},  &  \text{if } \frac{\ddot{p}^{\T}\ddot{\omega}}{2n\mu} \le \frac{1}{6} \\
 & \\
\sin^{-1}\left( \sqrt[3]{\frac{n \mu}{4\ddot{p}^{\T}\ddot{\omega}}
+\sqrt{\left( \frac{n \mu}{4\ddot{p}^{\T}\ddot{\omega}} \right)^2
+\left(\frac{1}{6}\right)^3}}
+\sqrt[3]{\frac{n \mu}{4\ddot{p}^{\T}\ddot{\omega}}
-\sqrt{\left( \frac{n \mu}{4\ddot{p}^{\T}\ddot{\omega}} \right)^2
+\left(\frac{1}{6}\right)^3}} \right),  
&  \text{if } \frac{\ddot{p}^{\T}\ddot{\omega}}{2n\mu} >   \frac{1}{6}. \\
\end{array} \right.
\label{breve}
\end{equation}
It is worthwhile to note that for $\alpha < \breve{\alpha}$, 
$F'(\sin({\alpha})) < 0$, i.e., $F(\sin({\alpha}))$ is a monotonic 
decreasing function of $\alpha \in [0, \breve{\alpha}]$.

We summarize the step size selection process as a simple algorithm as follows.

\begin{algorithm} {\bf (Step Size Selection)}
\\* 
Data: $\sigma>0$.
\newline
Step 1: Find the positive real solution of (\ref{quartic}) to get $\sin(\bar{\alpha})$ 
\newline
Step 2: Find the smallest positive real solution of (\ref{phat}) to get $\sin(\acute{\alpha})$,
the smallest positive real solution of (\ref{alpha1}) to get $\sin(\tilde{\alpha})$, and set
$\sin(\check{\alpha}) = \max \{ \sin(\tilde{\alpha}), \sin(\acute{\alpha}) \}$.
\newline
Step 3: Calculate $\breve{\alpha}$ given by (\ref{breve})
\newline
Step 4: The step size is obtained as 
$
\sin(\alpha) = \min \{ \sin(\bar{\alpha}), \sin(\check{\alpha}), \sin(\breve{\alpha})  \}.
$
\label{stepSizeFinal}
\end{algorithm}

\subsection{The practical implementation}

Therefore, Algorithm \ref{mainAlgo} can be implemented as follows.

\begin{algorithm} {\bf (Arc-search path-following)} 
\\*
Data: $H \ge 0$, $c$, $n$, $\theta = 0.19$, $\epsilon>\sigma>0$. \\
Step 0: Find initial point $(x^0, p^0, \omega^0) \in {\cal N}_2(\theta)$ 
using (\ref{initialPoint}), $\kappa$ using (\ref{criteria2}), and ${\mu}^{0}$ using (\ref{initialMu}).
\newline
{\bf while} $\kappa > \epsilon$
\begin{itemize}
\item[] Step 1: Compute $(\dot{x}, \dot{p}, \dot{\omega})$ and
$(\ddot{x}, \ddot{p}, \ddot{\omega})$ using (\ref{doty}) and (\ref{ddoty}). 
\item[] Step 2: Select $\sin(\alpha)$ using Algorithm \ref{stepSizeFinal}.
Update $(x(\alpha), p(\alpha), \omega(\alpha))$ and $\mu(\alpha)$
using (\ref{poly2}) and (\ref{updateMu2}).
\item[] Step 3: Compute $(\Delta x,\Delta p,\Delta \omega)$ using
(\ref{newtondir1}), update $(x^{k+1}, p^{k+1}, \omega^{k+1})$ 
and ${\mu}^{k+1}$ using (\ref{poly3}) and (\ref{poly4}). 
\item[] Step 4: Computer $\kappa$ using (\ref{criteria2}).
\item[] Step 5: Set $k+1 \rightarrow k$. Go back to Step 1.
\end{itemize}
{\bf end (while)} 
\label{mainAlgo1}
\end{algorithm}

\begin{remark}
The condition $\mu >\sigma$ guarantees that the equation (\ref{quartic}) has a positive 
solution before terminate criterion is met.
\end{remark}

\section{A design example}

In this section, we will use the design example of \cite{bertsekas82} to demonstrate 
the effectiveness and efficiency of the proposed algorithm. The linear time-invariant
system under consideration is given by
\begin{equation}
{\bf x}_{s+1}=\left[ \begin{array}{cc} 
1 & h \\ -h & 1  \end{array} \right] {\bf x}_s+
\left[  \begin{array}{c} 0 \\ h \end{array}  \right] {\bf u}_s=A{\bf x}_s+B{\bf u}_s, 
\hspace{0.1in} s=0, \ldots, N-1,
\label{stateSpace}
\end{equation}
with the initial state given by ${\bf x}_t =[15, 5]^{\T}$. The control constraints are
\begin{equation}
-1 \le {\bf u}_s \le 1, \hspace{0.1in} s=0, \ldots, N-1.
\end{equation}
The problem is to minimize 
\begin{eqnarray}
J=\min_{{\bf u}_0, {\bf u}_{1}, \cdots, {\bf u}_{N-1}} \frac{1}{2}{\bf x}_{N}^{\T}P{\bf x}_{N} + 
\frac{h}{2} \sum_{k=0}^{N-1}
\left[ 
{\bf x}_{k}^{\T}Q{\bf x}_{k}+R{\bf u}_{k}^2,
\right]
\label{obj3}
\end{eqnarray}
where the matrices $P$, $Q$, and scalar $R$ are given by 
\[ P=Q=\left[ \begin{array}{cc} 
2 & 0 \\ 0 & 1  \end{array} \right], \hspace{0.1in}  R=6.
\]
This problem arises from discretization of the continuous-time problem of minimizing
\[
\frac{1}{2}{\bf x}_{T}^{\T}P{\bf x}_{T} +\int_0^{T} \left[ 
{\bf x}(t)^{\T}Q{\bf x}(t)+R{\bf u}(t)^2 \right] dt
\]
subject to 
\[ 
\dot{\bf x}(t)=\left[ \begin{array}{cc} 
0 & 1 \\ -1 & 0  \end{array} \right] {\bf x}(t)+
\left[  \begin{array}{c} 0 \\ 1 \end{array}  \right] {\bf u}(t)=A{\bf x}(t)+B{\bf u}(t),
\]
and 
\[
1 \le {\bf u}(t) \le 1, \hspace{0.1in} t \in [0,T],
\]
where the interval $[0,T]$ is discretized into $N$ intervals of length $h=\frac{T}{N}$.

In our implementation of Algorithm \ref{mainAlgo1}, $\epsilon=10^{-8}$ and $\sigma=10^{-10}$ are selected.
For the simple design example, $T=50$ and $N=500$ are used. 
The original quadratic optimization problem has $Nr+Nm=1500$ variables, $Nr=1000$ equality constraints,
and $2Nm=1000$ inequality constraints. The reduced problem has $Nm=500$ variables, no equality constraints,
and $2Nm=1000$ inequality constraints, a significantly simpler problem. The advantage will be even more
significant if the state dimension $r$ is significantly larger than the control dimension $m$.
After $27$ iterations, the algorithm converges. Using the optimal control inputs, we
can calculate the state space response from (\ref{stateSpace}). The control inputs and state space 
response are displayed in Figure 1.
\begin{figure}[ht]
\centerline{\epsfig{file=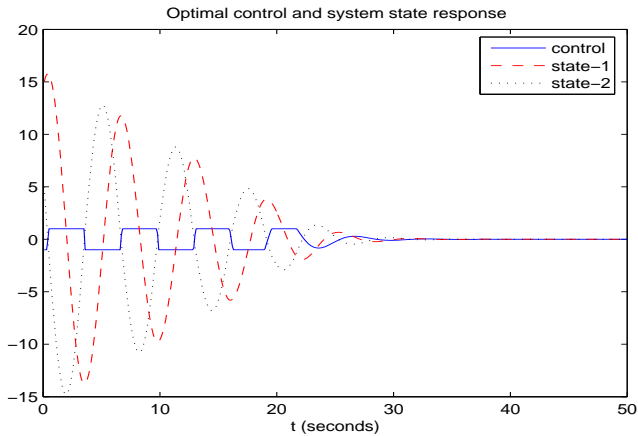,height=6cm,width=10cm}}
\caption{Optimal control with saturation constraint.}
\label{fig:iter1}
\end{figure}

\section{Conclusions}

This paper proposes an arc-search interior-point algorithm for convex
quadratic programming subject to box constraints that searches the optimizers along ellipses that 
approximate the central path. The saturation constrained LQR design is one such problem.
The algorithm is proved to be polynomial with the complexity bound $O(\sqrt{n}\log({1}/{\epsilon}))$. 
A constrained LQR design example from \cite{bertsekas82} is provided to demonstrate how the algorithm works. 
Preliminary test on this simple design problem shows that the proposed algorithm is promising.
A MATLAB M-file implementation of Algorithm \ref{mainAlgo1} is available from the author.


\section{Appendix A: Proofs of Technical Lemmas}

{\it Proof of Lemma \ref{positive}:}
\newline
From (\ref{xeqyz}), we have 
$\dot{x}^{\T} (\dot{\gamma}-\dot{\lambda})= \dot{z}^{\T} \dot{\gamma}+ \dot{y}^{\T} \dot{\lambda}
=\dot{p}^{\T} \dot{\omega}$, 
$\ddot{x}^{\T} (\ddot{\gamma}-\ddot{\lambda})= \ddot{z}^{\T} \ddot{\gamma}+ \ddot{y}^{\T} \ddot{\lambda}
=\ddot{p}^{\T} \ddot{\omega}$, 
$\ddot{x}^{\T} (\dot{\gamma}-\dot{\lambda})=\ddot{p}^{\T} \dot{\omega}$,
and $\dot{x}^{\T} (\ddot{\gamma}-\ddot{\lambda})=\dot{p}^{\T} \ddot{\omega}$.
Pre-multiplying $\dot{x}^{\T}$ and $\ddot{x}^{\T}$ to (\ref{Hdx}) gives 
\[
\dot{x}^{\T} (\dot{\gamma}-\dot{\lambda}) =\dot{x}^{\T} H \dot{x},
\]
\[
\ddot{x}^{\T} (\ddot{\gamma}-\ddot{\lambda})=\ddot{x}^{\T} H \ddot{x},
\]
\[
\ddot{x}^{\T} (\dot{\gamma}-\dot{\lambda})=\ddot{x}^{\T} H  \dot{x} 
= \dot{x}^{\T} H  \ddot{x} =\dot{x}^{\T} (\ddot{\gamma}-\ddot{\lambda}).
\]
(\ref{p1}) and (\ref{p2}) follow from the first two equations and the fact that $H$ is 
positive definite. The last equation gives (\ref{p3}).
Using (\ref{p1}), (\ref{p2}), and (\ref{p3}) gives 
\begin{eqnarray} \nonumber
& & (\dot{x}(1-\cos(\alpha))+ \ddot{x}\sin(\alpha))^{\T}H
(\dot{x}(1-\cos(\alpha))+ \ddot{x}\sin(\alpha)) \nonumber \\
& = &
(\dot{x}^{\T} H  \dot{x})(1-\cos(\alpha))^2 
+2(\dot{x}^{\T} H  \ddot{x})\sin(\alpha)(1-\cos(\alpha))
+(\ddot{x}^{\T} H\ddot{x}) \sin^2(\alpha)   \nonumber \\
& = & (\dot{x}^{\T} H  \dot{x})(1-\cos(\alpha))^2 
+(\ddot{x}^{\T} H  \ddot{x})  \sin^2(\alpha) 
+ (\ddot{x}^{\T} (\dot{\gamma}-\dot{\lambda})+\dot{x}^{\T} (\ddot{\gamma}-\ddot{\lambda}) ) 
\sin(\alpha) (1-\cos(\alpha)) \ge 0, \nonumber
\end{eqnarray}
which is the first inequality of (\ref{quad}). Using (\ref{p1}), (\ref{p2}), and (\ref{p3}) 
also gives  
\begin{eqnarray} \nonumber
& & (\dot{x}(1-\cos(\alpha))- \ddot{x}\sin(\alpha))^{\T}H
(\dot{x}(1-\cos(\alpha))- \ddot{x}\sin(\alpha)) \nonumber \\
& = &
(\dot{x}^{\T} H  \dot{x})(1-\cos(\alpha))^2 
-2(\dot{x}^{\T} H  \ddot{x})\sin(\alpha)(1-\cos(\alpha))
+(\ddot{x}^{\T} H\ddot{x}) \sin^2(\alpha)   \nonumber \\
& = & (\dot{x}^{\T} H  \dot{x})(1-\cos(\alpha))^2 
+(\ddot{x}^{\T} H\ddot{x})  \sin^2(\alpha) 
- (\ddot{x}^{\T} (\dot{\gamma}-\dot{\lambda})+\dot{x}^{\T} (\ddot{\gamma}-\ddot{\lambda}) ) 
\sin(\alpha) (1-\cos(\alpha)) \ge 0, \nonumber
\end{eqnarray}
which is the second inequality of (\ref{quad}). Replacing 
$\dot{x}(1-\cos(\alpha))$ and $\ddot{x}\sin(\alpha)$ by 
$\dot{x}\sin(\alpha)$ and $\ddot{x}(1-\cos(\alpha))$, and
following the same method, we can obtain (\ref{quad1}).
\hfill \qed

\vspace{0.3in}
\noindent
{\it Proof of Lemma \ref{size}:}
\newline
\noindent
From the last two rows of (\ref{doty}) or equivalently (\ref{pdk}), we have 
\begin{eqnarray} 
\Lambda \dot{y}+Y \dot{\lambda} = \Lambda Y e \nonumber \\
\Gamma \dot{z}+Z \dot{\gamma} = \Gamma Z e.  \nonumber
\end{eqnarray}
Pre-multiplying $Y^{-\frac{1}{2}}\Lambda^{-\frac{1}{2}}$ on both sides of the first equality gives
\begin{eqnarray} \nonumber
Y^{-\frac{1}{2}}\Lambda^{\frac{1}{2}}\dot{y} + 
Y^{\frac{1}{2}}\Lambda^{-\frac{1}{2}}\dot{\lambda}=Y^{\frac{1}{2}}\Lambda^{\frac{1}{2}}e.
\end{eqnarray} 
Pre-multiplying $Z^{-\frac{1}{2}}\Gamma^{-\frac{1}{2}}$ on both sides of the second equality gives
\begin{eqnarray}
Z^{-\frac{1}{2}}\Gamma^{\frac{1}{2}}\dot{z} + 
Z^{\frac{1}{2}}\Gamma^{-\frac{1}{2}}\dot{\gamma}=Z^{\frac{1}{2}}\Gamma^{\frac{1}{2}}e.
\end{eqnarray}
Let $u=\left[ \begin{array}{c} Y^{-\frac{1}{2}}\Lambda^{\frac{1}{2}}\dot{y} \\
Z^{-\frac{1}{2}}\Gamma^{\frac{1}{2}}\dot{z} \end{array} \right]$,
$v=\left[ \begin{array}{c} Y^{\frac{1}{2}}\Lambda^{-\frac{1}{2}}\dot{\lambda} \\
Z^{\frac{1}{2}}\Gamma^{-\frac{1}{2}}\dot{\gamma} \end{array} \right]$, and 
$w=\left[ \begin{array}{c} Y^{\frac{1}{2}}\Lambda^{\frac{1}{2}}e \\
Z^{\frac{1}{2}}\Gamma^{\frac{1}{2}}e \end{array} \right]$,
use (\ref{xeqyz}) and Lemma \ref{positive}, we have
$u^{\T}v =\dot{y}^{\T} \dot{\lambda} + \dot{z}^{\T} \dot{\gamma}
= \dot{x}^{\T} (\dot{\gamma}-\dot{\lambda}) \ge 0$. 
Using Lemma \ref{ineq} and (\ref{mu}), we have
\[
\|u\|^2 + \|v\|^2=\sum_{i=1}^{n} \left( \frac{\dot{y}_i^2\lambda_i}{y_i} 
+\frac{\dot{z}_i^2\gamma_i}{z_i} \right)
+  \sum_{i=1}^{n}  \left( \frac{\dot{\lambda}_i^2y_i}{\lambda_i}
+ \frac{\dot{\gamma}_i^2z_i}{\gamma_i} \right)
\le \sum_{i=1}^{n} \left( y_i \lambda_i + z_i \gamma_i \right) 
= \sum_{i=1}^{2n} p_i \omega_i = 2n\mu.
\]
Since $p_i>0$ and $\omega_i>0$, dividing both sides of the inequality by 
$\min_j p_i \omega_i$ and using (\ref{umax1}) gives
\begin{equation}
\sum_{i=1}^{n} \left( \frac{\dot{y}_i^2}{y_i^2}  +\frac{\dot{z}_i^2}{z_i^2} \right)
+ \sum_{i=1}^{n} \left( \frac{\dot{\gamma}_i^2}{\gamma_i^2} 
+\frac{\dot{\lambda}_i^2}{\lambda_i^2} \right)
= \Bigl\lVert \frac{\dot{p}}{p} \Bigr\rVert^2 
+ \Bigl\lVert \frac{\dot{\omega}}{\omega} \Bigr\rVert^2 
\le \frac{2n\mu}{\min_j p_i\omega_i}
\le \frac{2n}{1-\theta}.
\label{p1k1}
\end{equation}
This proves (\ref{sumEq}). Combining (\ref{sumEq}) and Lemma~\ref{simple} 
yields 
\[
\Bigl\lVert \frac{{\dot{p}}}{p} \Bigr\rVert^2
\Bigl\lVert \frac{{\dot{\omega}}}{\omega} \Bigr\rVert^2
\le \left( \frac{n}{(1-\theta)} \right)^2.
\]
This leads to,
\begin{equation}
\Bigl\lVert \frac{{\dot{p}}}{{p}} \Bigr\rVert
\Bigl\lVert \frac{{\dot{\omega}}}{\omega} \Bigr\rVert 
\le \frac{n}{(1-\theta)}.
\label{xiPsi}
\end{equation}
Therefore, using (\ref{umax1}) and Cauchy–-Schwarz inequality yields
\begin{equation}
\frac{\dot{p}^{\T} \dot{\omega}} {\mu}
\le \frac{|\dot{p}|^{\T} |\dot{\omega}|} {\mu} 
\le (1+\theta)\frac{|\dot{p}|^{\T} |\dot{\omega}|}{\max_i p_i\omega_i}
\le (1+\theta) \left( \frac{|\dot{p}|}{p} \right)^{\T} 
\left( \frac{|\dot{\omega}|}{\omega}\right)
\le (1+\theta) \Bigl\lVert \frac{{\dot{p}}}{p} \Bigr\rVert
\Bigl\lVert \frac{{\dot{\omega}}}{\omega} \Bigr\rVert
\le \frac{1+\theta}{1-\theta}n,
\end{equation}
which is the second inequality of (\ref{circNorm}). From Lemma 
\ref{positive}, $\dot{p}^{\T} \dot{\omega} = \dot{x}^{\T} (\dot{\gamma}-\dot{\lambda})
= \dot{x}^{\T}H \dot{x} \ge 0$,
we have the first inequality of (\ref{circNorm}).
\hfill \qed

\vspace{0.3in}
\noindent
{\it Proof of Lemma \ref{restSize}:}
\newline
\noindent
Similar to the proof of Lemma~\ref{size}, from (\ref{pddk}), we have 
\begin{eqnarray} \nonumber
&  & \Lambda \ddot{y}+Y \ddot{\lambda} =-2\left(\dot{y} \circ \dot{\lambda}\right) \\ \nonumber
& \Longleftrightarrow & Y^{-\frac{1}{2}}\Lambda^{\frac{1}{2}}\ddot{y} + 
Y^{\frac{1}{2}}\Lambda^{-\frac{1}{2}}\ddot{\lambda}=-2Y^{-\frac{1}{2}}\Lambda^{-\frac{1}{2}}
\left( \dot{y} \circ \dot{\lambda} \right),
\end{eqnarray} 
and
\begin{eqnarray} \nonumber
&  & \Gamma \ddot{z}+Z \ddot{\gamma} =-2 \left( \dot{z} \circ \dot{\gamma} \right) \\ \nonumber
& \Longleftrightarrow & Z^{-\frac{1}{2}}\Gamma^{\frac{1}{2}}\ddot{z} + 
Z^{\frac{1}{2}}\Gamma^{-\frac{1}{2}}\ddot{\gamma}=-2Z^{-\frac{1}{2}}\Gamma^{-\frac{1}{2}}
\left( \dot{z} \circ \dot{\gamma} \right).
\end{eqnarray} 
Let $u=\left[ \begin{array}{c} Y^{-\frac{1}{2}}\Lambda^{\frac{1}{2}}\ddot{y} \\
Z^{-\frac{1}{2}}\Gamma^{\frac{1}{2}}\ddot{z} \end{array} \right]$,
$v=\left[ \begin{array}{c} Y^{\frac{1}{2}}\Lambda^{-\frac{1}{2}}\ddot{\lambda} \\
Z^{\frac{1}{2}}\Gamma^{-\frac{1}{2}}\ddot{\gamma} \end{array} \right]$, and 
$w=\left[ \begin{array}{c} -2Y^{-\frac{1}{2}}\Lambda^{-\frac{1}{2}}
\left( \dot{y} \circ \dot{\lambda} \right) \\
-2Z^{-\frac{1}{2}}\Gamma^{-\frac{1}{2}}
\left( \dot{z} \circ \dot{\gamma} \right) \end{array} \right]$,
using (\ref{xeqyz}) and Lemma \ref{positive}, 
$u^{\T}v = \ddot{y}^{\T} \ddot{\lambda} + \ddot{z}^{\T} \ddot{\gamma} 
= \ddot{x}^{\T} (\ddot{\gamma}-\ddot{\lambda}) \ge 0$. Using Lemma \ref{ineq},  
we have
\begin{eqnarray}
\|u\|^2 + \|v\|^2 & = & \sum_{i=1}^{n} \left( \frac{\ddot{y}_i^2\lambda_i}{y_i}
+ \frac{\ddot{z}_i^2\gamma_i}{z_i} \right)
+  \sum_{i=1}^{n} \left( \frac{\ddot{\lambda}_i^2y_i}{\lambda_i} 
+ \frac{\ddot{\gamma}_i^2 z_i}{\gamma_i} \right)   
\nonumber \\
& \le & \Bigl\lVert -2Y^{-\frac{1}{2}}\Lambda^{-\frac{1}{2}} \left(\dot{y}
\circ \dot{\lambda} \right) \Bigr\rVert ^2 
+ \Bigl\lVert -2Z^{-\frac{1}{2}}\Gamma^{-\frac{1}{2}} \left(\dot{z}
\circ \dot{\gamma} \right) \Bigr\rVert ^2  
\nonumber \\
& = & 4\sum_{i=1}^{n} \left( \frac{{\dot{y}_i^2}}{y_i}
\frac{{\dot{\lambda}_i^2}}{{\lambda_i}}
+ \frac{{\dot{z}_i^2}}{z_i} \frac{{\dot{\gamma}_i^2}}{{\gamma_i}} \right). \nonumber 
\end{eqnarray}
Dividing both sides of the inequality by $\mu$ and using (\ref{umax1}) gives
\begin{eqnarray}
& & (1-\theta) \left( \sum_{i=1}^{n} \left( \frac{\ddot{y}_i^2}{y_i^2}  
+ \frac{\ddot{z}_i^2}{z_i^2} \right)
+ \sum_{i=1}^{n} \left( \frac{\ddot{\lambda}_i^2}{\lambda_i^2}
+\frac{\ddot{\gamma}_i^2}{\gamma_i^2} \right)  \right) \nonumber \\
& = & (1-\theta) \left( 
\Bigl\lVert \frac{{\ddot{p}}}{p}  \Bigr\rVert^2
+\Bigl\lVert \frac{{\ddot{\omega}}}{\omega}  \Bigr\rVert^2  \right) \nonumber \\
& \le  & 4(1+\theta) \left( \sum_{i=1}^{n} \left(
\frac{{\dot{y}_i^2}}{{y_i^2}}\frac{{\dot{\lambda}_i^2}}{{\lambda_i^2}} 
+ \frac{{\dot{z}_i^2}}{{z_i^2}}\frac{{\dot{\gamma}_i^2}}{\gamma_i^2} 
\right) \right), \nonumber 
\end{eqnarray}
in view of Lemma \ref{size}, this leads to 
\begin{equation}
\Bigl\lVert \frac{\ddot{p}}{p} \Bigr\rVert^2 +
\Bigl\lVert \frac{\ddot{\omega}}{\omega} \Bigr\rVert^2
\le 4 \frac{1+\theta}{1-\theta} \Bigl\lVert \frac{\dot{p}}{p} 
\circ \frac{\dot{\omega}}{\omega} \Bigr\rVert^2
\le 4 \frac{1+\theta}{1-\theta} \Bigl\lVert \frac{\dot{p}}{p} \Bigr\rVert^2
\Bigl\lVert \frac{\dot{\omega}}{\omega} \Bigr\rVert^2
\le \frac{4(1+\theta)n^2}{(1-\theta)^3}.
\label{p2k2}
\end{equation}
This proves (\ref{ddNorm}). Combining (\ref{ddNorm}) and Lemma~\ref{simple} yields
\[
\Bigl\lVert \frac{\ddot{p}}{p} \Bigr\rVert^2
\Bigl\lVert \frac{\ddot{\omega}}{\omega} \Bigr\rVert^2 
\le \left( \frac{2(1+\theta)n^2}{(1-\theta)^3} \right)^2.
\]
Using (\ref{umax1}) and Cauchy-Schwarz inequality yields
\begin{equation}
\frac{\ddot{p}^{\T} \ddot{\omega}} {\mu}
\le \frac{|\ddot{p}|^{\T} |\ddot{\omega}|} {\mu} 
\le (1+\theta)\frac{|\ddot{p}|^{\T} |\ddot{\omega}|}{\max_i p_i\omega_i}
\le (1+\theta) \left( \frac{|\ddot{p}|}{p} \right)^{\T} 
\left( \frac{|\ddot{\omega}|}{\omega} \right)
\le (1+\theta) \Bigl\lVert \frac{\ddot{p}}{p} \Bigr\rVert
\Bigl\lVert \frac{\ddot{\omega}}{\omega} \Bigr\rVert 
\le \frac{2n^2(1+\theta)^2}{(1-\theta)^3}, \nonumber
\end{equation}
which is the second inequality of (\ref{circ1}). Using (\ref{xeqyz}) and Lemma \ref{positive}, we have 
$\ddot{p}^{\T} \ddot{\omega} = \ddot{y}^{\T} \ddot{\lambda}+
\ddot{z}^{\T} \ddot{\gamma} =\ddot{x}^{\T}(\ddot{\gamma}-\ddot{\lambda}) 
=\ddot{x}^{\T}H \ddot{x} \ge 0$.
This proves the first inequality of (\ref{circ1}). Finally,
using (\ref{umax1}), Cauchy-Schwarz inequality, (\ref{sumEq}), and (\ref{ddNorm}) yields
\begin{eqnarray}
\frac{\left| \dot{p}^{\T} \ddot{\omega} \right|} {\mu}
\le \frac{|\dot{p}|^{\T} |\ddot{\omega}|} {\mu} 
\le (1+\theta)\frac{|\dot{p}|^{\T} |\ddot{\omega}|}{\max_i p_i\omega_i}
\le (1+\theta) \left( \frac{|\dot{p}|}{p} \right)^{\T} 
\left( \frac{|\ddot{\omega}|}{\omega} \right)
\nonumber \\
\le (1+\theta) \Bigl\lVert \frac{\dot{p}}{p} \Bigr\rVert
\Bigl\lVert \frac{\ddot{\omega}}{\omega} \Bigr\rVert 
\le (1+\theta) \left( \frac{2n}{1-\theta} \right)^{\frac{1}{2}} 
\left( \frac{4(1+\theta)n^2}{(1+\theta)^3}   \right)^{\frac{1}{2}}
\le \frac{(2n(1+\theta))^{\frac{3}{2}}}{(1-\theta)^2}. \nonumber
\end{eqnarray}
This proves the first inequality of (\ref{circ2}). Replacing $\dot{p}$ by $\ddot{p}$ and
$\ddot{\omega}$ by $\dot{\omega}$, then using the same reasoning, we can prove
the second inequality of (\ref{circ2}).
\hfill \qed

\vspace{0.3in}
\noindent
{\it Proof of Lemma \ref{main2}:}
\newline
\noindent
Using (\ref{updatedY}), (\ref{updatedL}), (\ref{pdk}), and (\ref{pddk}), we have
\begin{align} 
& y^{\T}({\alpha})\lambda({\alpha}) & \nonumber \\
 = & \Big( y^{\T}-\dot{y}^{\T}\sin({\alpha})+\ddot{y}^{\T}(1-\cos({\alpha}))
   \Big)
  \Big(\lambda-\dot{\lambda}\sin({\alpha})+\ddot{\lambda}(1-\cos({\alpha})) \Big) 
  & \nonumber \\
 = & y^{\T}\lambda-y^{\T}\dot{\lambda}\sin({\alpha})
  +y^{\T}\ddot{\lambda}(1-\cos({\alpha})) & \nonumber \\
 & -\dot{y}^{\T}\lambda \sin({\alpha})+\dot{y}^{\T}\dot{\lambda}\sin^2({\alpha})
  -\dot{y}^{\T}\ddot{\lambda}\sin({\alpha})(1-\cos({\alpha})) & \nonumber  \\
 & +\ddot{y}^{\T}{\lambda}(1-\cos({\alpha}))
  -\ddot{y}^{\T}\dot{\lambda}\sin({\alpha})(1-\cos({\alpha}))
  +\ddot{y}^{\T}\ddot{\lambda}(1-\cos({\alpha}))^2 & \nonumber \\
= & y^{\T}\lambda-(y^{\T}\dot{\lambda}+\lambda^{\T}\dot{y})\sin({\alpha})
  + (y^{\T}\ddot{\lambda}+\lambda^{\T}\ddot{y})(1-\cos({\alpha}))  & \nonumber \\
&  - (\dot{y}^{\T}\ddot{\lambda}+\dot{\lambda}^{\T}\ddot{y})\sin({\alpha})(1-\cos({\alpha}))
  + \dot{y}^{\T}\dot{\lambda}\sin^2({\alpha})+\ddot{y}^{\T}\ddot{\lambda}(1-\cos({\alpha}))^2
  & \nonumber \\
= & y^{\T}\lambda(1-\sin({\alpha})) -2\dot{y}^{\T}\dot{\lambda}(1-\cos({\alpha}))  & 
  \nonumber \\
& - (\dot{y}^{\T}\ddot{\lambda}+\dot{\lambda}^{\T}\ddot{y})\sin({\alpha})(1-\cos({\alpha}))
  & \nonumber \\
& + \dot{y}^{\T}\dot{\lambda}(1-\cos^2({\alpha}))
  + \ddot{y}^{\T}\ddot{\lambda}(1-\cos({\alpha}))^2
  &  \nonumber   \\
= & y^{\T}\lambda (1-\sin({\alpha}))+(\ddot{y}^{\T}\ddot{\lambda}-\dot{y}^{\T}\dot{\lambda})
   (1-\cos({\alpha}))^2  - (\dot{y}^{\T}\ddot{\lambda}+\dot{\lambda}^{\T}\ddot{y})\sin({\alpha})(1-\cos({\alpha})). &
\label{eqmu1}      
\end{align}
Using (\ref{updatedZ}), (\ref{updatedG}), (\ref{pdk}), (\ref{pddk}), and a
similar derivation of (\ref{eqmu1}), we have
\begin{equation}
z^{\T}({\alpha})\gamma({\alpha})=z^{\T}\gamma (1-\sin({\alpha}))+
(\ddot{z}^{\T}\ddot{\gamma}-\dot{z}^{\T}\dot{\gamma})(1-\cos({\alpha}))^2  
- (\dot{z}^{\T}\ddot{\gamma}+\dot{\gamma}^{\T}\ddot{z})\sin({\alpha})(1-\cos({\alpha})).
\label{eqmu2}
\end{equation}
Combining (\ref{eqmu1}) and (\ref{eqmu2}) gives
\begin{align}
& 2n\mu({\alpha})  =p^{\T}(\alpha) \omega(\alpha) & \nonumber \\
= & y^{\T}({\alpha})\lambda({\alpha}) + z^{\T}({\alpha})\gamma({\alpha}) 
& \nonumber \\
= & ( y^{\T}\lambda+z^{\T}\gamma)
 (1-\sin({\alpha})) +(\ddot{y}^{\T}\ddot{\lambda}+\ddot{z}^{\T}\ddot{\gamma}
 -\dot{y}^{\T}\dot{\lambda}-\dot{z}^{\T}\dot{\gamma})
(1-\cos({\alpha}))^2    & \nonumber \\
& -(\dot{y}^{\T} \ddot{\lambda}+\dot{z}^{\T} \ddot{\gamma} 
+\ddot{y}^{\T} \dot{\lambda}+\ddot{z}^{\T} \dot{\gamma})\sin(\alpha)(1-\cos({\alpha}))  & 
\nonumber \\
= & (y^{\T}\lambda+z^{\T}\gamma)(1-\sin({\alpha}))
+ (\ddot{x}^{\T}(\ddot{\gamma}-\ddot{\lambda})
-\dot{x}^{\T}(\dot{\gamma}-\dot{\lambda}))(1-\cos({\alpha}))^2    & 
\mbox{use (\ref{xeqyz})} \nonumber \\
& -( \dot{x}^{\T} (\ddot{\gamma}-\ddot{\lambda})
+\ddot{x}^{\T} (\dot{\gamma}-\dot{\lambda}))\sin(\alpha)(1-\cos({\alpha}))   & 
 \label{lastEq} \\
\le & (y^{\T}\lambda+z^{\T}\gamma) \left( 1-\sin({\alpha}) \right) 
+(\ddot{x}^{\T} H \ddot{x}- \dot{x}^{\T} H \dot{x})(1-\cos({\alpha}))^2   &  
\mbox{use (\ref{quad}) in Lemma \ref{positive}} \nonumber \\
& +   \dot{x}^{\T} H \dot{x}(1-\cos({\alpha}))^2  + \ddot{x}^{\T} H \ddot{x}\sin^2(\alpha) &
    \nonumber \\
= & (y^{\T}\lambda+z^{\T}\gamma) \left( 1-\sin({\alpha}) \right)
+ \ddot{x}^{\T} H \ddot{x} (1-\cos({\alpha}))^2 
+ \ddot{x}^{\T} H \ddot{x}\sin^2(\alpha). &
   \nonumber
\end{align}
Dividing the both side by $2n$ proves the second inequality of the lemma. 
Combining (\ref{lastEq}) and (\ref{quad1}) proves the first inequality of the lemma.
\hfill \qed

\vspace{0.3in}
\noindent
{\it Proof of Lemma \ref{main3}:}
\newline
\noindent
From the second inequality of (\ref{updatedU}), we have 
\[
\mu(\alpha) - \mu \le \mu \sin(\alpha) \left( -1
+ \frac{\ddot{x}^{\T}H\ddot{x}}{2n\mu} \sin(\alpha) + 
\frac{\ddot{x}^{\T}H\ddot{x}}{2n\mu} \sin^3(\alpha) \right).
\]
Clearly, if $\frac{\ddot{x}^{\T}H\ddot{x}}{2n\mu} \le \frac{1}{2}$, for any
$\alpha \in [0, \frac{\pi}{2}]$, the function 
\[
f(\alpha) := \left( -1 + \frac{\ddot{x}^{\T}H\ddot{x}}{2n\mu} \sin(\alpha) + 
\frac{\ddot{x}^{\T}H\ddot{x}}{2n\mu} \sin^3(\alpha) \right) \le 0,
\]
and $\mu({\alpha}) \le  \mu$. 
If $\frac{\ddot{x}^{\T}H\ddot{x}}{2n\mu} >  \frac{1}{2}$, using Lemma \ref{cubic}, the function $f$
has one real solution $\sin(\alpha) \in (0,1)$. The solution 
is given as 
\[
\sin(\hat{\alpha}) = 
\sqrt[3]{\frac{n\mu}{\ddot{x}^{\T}H\ddot{x}} + 
\sqrt{\left( \frac{n\mu}{\ddot{x}^{\T}H\ddot{x}} \right)^2 + 
\left( \frac{1}{3} \right)^3 } }
+\sqrt[3]{\frac{n\mu}{\ddot{x}^{\T}H\ddot{x}} - 
\sqrt{\left( \frac{n\mu}{\ddot{x}^{\T}H\ddot{x}} \right)^2 + 
\left( \frac{1}{3} \right)^3 } }.
\]
This proves the Lemma.
\hfill \qed

\vspace{0.3in}
\noindent
{\it Proof of Lemma \ref{theta1}:}
\newline
\noindent
Since $\sin(\tilde{\alpha})$ is the only positive real solution of (\ref{alpha1})
in $[0,1]$ and $q(0) <0$,
substituting $a_0, a_1, a_2, a_3$ and $a_4$ into (\ref{alpha1}), we have, for all 
$\sin(\alpha) \le \sin(\tilde{\alpha})$,
\begin{align}
& \left( \Big\lVert \ddot{p} \circ \ddot{\omega}-\dot{\omega} \circ \dot{p}
  -\frac{1}{2n}(\ddot{p}^{\T}\ddot{\omega}-\dot{\omega}^{\T}\dot{p})e \Big\rVert \right) \sin^4(\alpha) 
   +\left( \Big\lVert  \dot{p} \circ \ddot{\omega}+\dot{\omega} \circ \ddot{p} 
   -\frac{1}{2n}(\dot{p}^{\T}\ddot{\omega}+\dot{\omega}^{\T}\ddot{p})e \Big\rVert \right)\sin^3(\alpha) 
\nonumber \\
\le & - \left( 2\theta \frac{\dot{p}^{\T}\dot{\omega}}{2n} \right) \sin^4(\alpha) 
   -\left( 2\theta \frac{\dot{p}^{\T}\dot{\omega}}{2n} \right)\sin^2(\alpha)
   +\theta \mu (1-\sin(\alpha)).
\label{intmd1}
\end{align}

Using (\ref{palpha}), (\ref{kalpha}), (\ref{pdk}), (\ref{pddk}), (\ref{updateMu2}), 
Lemma \ref{sincos}, (\ref{intmd1}), and the first inequality of (\ref{updatedU}), we have
\begin{align} 
& \Big\lVert p(\alpha) \circ \omega(\alpha) - \mu(\alpha) e \Big\rVert \nonumber \\
= & \Big\lVert  
	\Big( p-\dot{p}\sin(\alpha) +\ddot{p}  (1-\cos(\alpha)) \Big) \circ 
	\Big( \omega-\dot{\omega}\sin(\alpha) +\ddot{\omega}  (1-\cos(\alpha)) \Big)
	-\mu(\alpha) e   \Big\rVert  \nonumber \\
= & \Big\lVert  (p \circ \omega - {\mu} e)(1-\sin(\alpha)) 
 + \left( \ddot{p} \circ \ddot{\omega} -\dot{p} \circ \dot{\omega}
  - \frac{1}{2n}(\ddot{p}^{\T} \ddot{\omega} -\dot{p}^{\T} \dot{\omega})e \right) (1-\cos(\alpha))^2
\nonumber \\
  & - \left( \dot{p} \circ \ddot{\omega}+\dot{\omega} \circ  \ddot{p}
  - \frac{1}{2n}(\dot{p}^{\T} \ddot{\omega} +\ddot{p}^{\T} \dot{\omega})e \right) 
    \sin(\alpha) (1-\cos(\alpha)) \Big\rVert
\nonumber \\
\le & (1-\sin(\alpha))\Big\lVert {p} \circ {\omega} - {\mu} e \Big\rVert
+ \Big\lVert (\ddot{p} \circ \ddot{\omega}-\dot{p} \circ \dot{\omega}
    - \frac{1}{2n}(\ddot{p}^{\T} \ddot{\omega} -\dot{p}^{\T} \dot{\omega})  )e \Big\rVert 
     (1-\cos(\alpha))^2
\nonumber \\
& + \Big\lVert (\dot{p} \circ \ddot{\omega}+\dot{\omega} \circ \ddot{p}
   - \frac{1}{2n}(\dot{p}^{\T} \ddot{\omega} +\ddot{p}^{\T} \dot{\omega})e \Big\rVert  
     \sin(\alpha) (1-\cos(\alpha))
\label{implement} \\
\le & \theta {\mu} (1-\sin(\alpha))  
+ \Big\lVert (\ddot{p} \circ \ddot{\omega}-\dot{p} \circ \dot{\omega}
    - \frac{1}{2n}(\ddot{p}^{\T} \ddot{s} -\dot{p}^{\T} \dot{\omega})  )e \Big\rVert 
   \sin^{4}(\alpha)   +a_3\sin^{3}(\alpha)
  \nonumber \\
\le & 2 \theta \mu (1-\sin(\alpha))-\Big(2\theta \frac{\dot{p}^{\T} \dot{\omega}}{2n} \Big)
   (\sin^{4}(\alpha)+\sin^{2}(\alpha))
  \nonumber \\
\le & 2 \theta \left( \mu (1-\sin(\alpha))-\Big(\frac{\dot{x}^{\T} H \dot{x}}{2n} \Big)
   \left( \left(1-\cos(\alpha) \right)^2+\sin^{2}(\alpha) \right) \right)
  \nonumber \\
\le & 2 \theta \mu(\alpha).
\label{2t}
\end{align}
Hence, the point $(x(\alpha), p(\alpha), \omega(\alpha))$ satisfies the proximity condition
for ${\cal N}_2(2\theta)$. To check the positivity condition $(p(\alpha), \omega(\alpha)) >0$,
note that the initial condition $(p, \omega)>0$. It follows from (\ref{2t}) and Corollary 
\ref{uagt1} that, for $\sin(\alpha) \le \sin(\bar{\alpha})$ and $\theta < 0.5$,
\begin{equation}
p_i(\alpha)\omega_i(\alpha) \ge (1-2\theta)\mu(\alpha) >0.
\label{pos1}
\end{equation}
Therefore, we cannot have $p_i(\alpha)=0$ or $\omega_i(\alpha)=0$ for any index $i$ when 
$\alpha \in [0, \sin^{-1}(\bar{\alpha})]$. This proves $(p(\alpha), \omega(\alpha))>0$.
\hfill \qed

\vspace{0.3in}
\noindent
{\it Proof of Lemma \ref{moresize}:}
\newline
\noindent
Since 
\[\Bigl\lVert \frac{\dot{p}}{p} \Bigr\rVert^2
=\sum_{i=1}^{2n} \left( \frac{\dot{p_i}}{p_i}  \right)^2, 
\hspace{0.1in} \Bigl\lVert \frac{\dot{\omega}}{\omega} \Bigr\rVert^2
=\sum_{i=1}^{2n} \left( \frac{\dot{\omega_i}}{\omega_i}  \right)^2, 
\]
From Lemma \ref{size} and (\ref{umax1}), we have
\begin{eqnarray} \nonumber
& & \left( \frac{n}{1-\theta} \right)^2   \nonumber \\
& \ge & \Bigl\lVert \frac{{\dot{p}}}{p} \Bigr\rVert^2
\Bigl\lVert \frac{\dot{\omega}}{\omega} \Bigr\rVert^2
=\left( \sum_{i=1}^{2n} \left( \frac{\dot{p_i}}{p_i}  \right)^2 \right) 
\left( \sum_{i=1}^{2n} \left( \frac{\dot{\omega_i}}{\omega_i}  \right)^2 \right)
 \nonumber \\
& \ge & \sum_{i=1}^{2n} \left( \frac{\dot{p_i}}{p_i} 
\frac{\dot{\omega_i}}{\omega_i}  \right)^2 
=\Bigl\lVert \frac{\dot{p}}{p} \circ
\frac{\dot{\omega}}{\omega}  \Bigr\rVert^2  \nonumber \\
& \ge & \sum_{i=1}^{2n} \left( \frac{\dot{p_i}\dot{\omega_i}}{(1+\theta) \mu} \right)^2
=\frac{1}{(1+\theta)^2 \mu^2}\Bigl\lVert \dot{p} \circ \dot{\omega} \Bigr\rVert^2,
\nonumber
\end{eqnarray}
i.e., 
\[
\Bigl\lVert \dot{p} \circ \dot{\omega} \Bigr\rVert^2 
\le \left( \frac{1+\theta}{1-\theta} n \mu  \right)^2
\]
This proves (\ref{cnorm1}). Using
\[\Bigl\lVert \frac{\ddot{p}}{p} \Bigr\rVert^2
=\sum_{i=1}^{2n} \left( \frac{\ddot{p_i}}{p_i}  \right)^2, 
\hspace{0.1in} \Bigl\lVert \frac{\ddot{\omega}}{\omega} \Bigr\rVert^2
=\sum_{i=1}^{2n} \left( \frac{\ddot{\omega_i}}{\omega_i}  \right)^2, 
\]
and Lemma \ref{restSize}, then following the same procedure, it is easy to 
verify (\ref{cnorm2}). From (\ref{sumEq}) and (\ref{ddNorm}), we have
\begin{eqnarray} \nonumber
& & \left( \frac{2n}{(1-\theta)} \right) 
\left( \frac{4(1+\theta)n^2}{(1-\theta)^3} \right)
\ge \left(
\Bigl\lVert \frac{\dot{p}}{p}\Bigr\rVert^2+ \Bigl\lVert \frac{\dot{\omega}}{\omega} \Bigr\rVert^2
\right)
\left(
\Bigl\lVert \frac{\ddot{p}}{p}\Bigr\rVert^2+ \Bigl\lVert \frac{\ddot{\omega}}{\omega} \Bigr\rVert^2
\right)
  \nonumber \\
& \ge & \Bigl\lVert \frac{\ddot{p}}{p} \Bigr\rVert^2
    \Bigl\lVert \frac{\dot{\omega}}{\omega} \Bigr\rVert^2
  +\Bigl\lVert \frac{\dot{p}}{p} \Bigr\rVert^2
   \Bigl\lVert \frac{\ddot{\omega}}{\omega} \Bigr\rVert^2   \nonumber \\
& = &\left( \sum_{i=1}^{2n} \left( \frac{\ddot{p_i}}{p_i}  \right)^2 \right) 
\left( \sum_{i=1}^{2n} \left( \frac{\dot{\omega_i}}{\omega_i}  \right)^2 \right)
+\left( \sum_{i=1}^{2n} \left( \frac{\dot{p_i}}{p_i}  \right)^2 \right) 
\left( \sum_{i=1}^{2n} \left( \frac{\ddot{\omega_i}}{\omega_i}  \right)^2 \right)
  \nonumber \\
& \ge  & \sum_{i=1}^{2n} \left( \frac{\ddot{p_i}\dot{\omega_i}}{p_i\omega_i} \right)^2
+\sum_{i=1}^{2n} \left( \frac{\dot{p_i}\ddot{\omega_i}}{p_i\omega_i} \right)^2
\nonumber \\
& \ge & \sum_{i=1}^{2n} \left( \frac{\ddot{p_i}\dot{\omega_i}}{(1+\theta) \mu} \right)^2
+\sum_{i=1}^{2n} \left( \frac{\dot{p_i}\ddot{\omega_i}}{(1+\theta) \mu} \right)^2
\nonumber \\
& = & \frac{1}{(1+\theta)^2\mu^2}
\left(  \Bigl\lVert \ddot{p} \circ \dot{\omega} \Bigr\rVert^2
+\Bigl\lVert \dot{p} \circ \ddot{\omega} \Bigr\rVert^2 \right),
\nonumber \\
\end{eqnarray}
i.e.,
\[
\Bigl\lVert \ddot{p} \circ \dot{\omega} \Bigr\rVert^2
+\Bigl\lVert \dot{p} \circ \ddot{\omega} \Bigr\rVert^2
\le 
\frac{(2n)^3(1+\theta)^3}{(1-\theta)^4} \mu^2.
\]
This proves the lemma.
\hfill \qed

\vspace{0.3in}
\noindent
{\it Proof of Lemma \ref{aBound}:}
\newline
\noindent
First notice that $q(\sin(\alpha))$ is a monotonic increasing function of
$\sin(\alpha)$ for $\alpha \in [0, \frac{\pi}{2}]$ and $q(\sin(0))<0$, 
therefore, we need only to show that $q(\frac{\theta}{\sqrt{n}}) <0$ for
$\theta \le 0.22$. Using Lemma \ref{circCom}, we have
\[
\Big\lVert \dot{p} \circ \ddot{\omega}+\dot{\omega} \circ \ddot{p} 
-\frac{1}{2n}(\dot{p}^{\T}\ddot{\omega}+\dot{\omega}^{\T}\ddot{p})e \Big\rVert
\le 
\Big\lVert \dot{p} \circ \ddot{\omega} \Big\rVert+
\Big\lVert \dot{\omega} \circ \ddot{p} \Big\rVert,
\]
\[
\Big\lVert \ddot{p} \circ \ddot{\omega}-\dot{\omega} \circ \dot{p}
-\frac{1}{2n}(\ddot{p}^{\T}\ddot{\omega}-\dot{\omega}^{\T}\dot{p})e \Big\rVert
\le
\Big\lVert \ddot{p} \circ \ddot{\omega} \Big\rVert+
\Big\lVert \dot{\omega} \circ \dot{p} \Big\rVert.
\]
In view of Lemmas \ref{moresize}, \ref{size}, and \ref{restSize}, from (\ref{alpha1}), we have,
for $\alpha \in [0, \frac{\pi}{2}]$,
\begin{align}
q(\sin(\alpha)) \le & \left(\Big\lVert \ddot{p} \circ \ddot{\omega} \Big\rVert+
\Big\lVert \dot{\omega} \circ \dot{p} \Big\rVert
+2\theta \frac{\dot{p}^{\T}\dot{\omega}}{2n} \right) \sin^4(\alpha)
+\left( \Big\lVert \dot{p} \circ \ddot{\omega} \Big\rVert+
\Big\lVert \dot{\omega} \circ \ddot{p} \Big\rVert\right) \sin^3(\alpha)
  \nonumber \\
& + 2\theta \frac{\dot{p}^{\T}\dot{\omega}}{2n}\sin^2(\alpha)
+\theta \mu\sin(\alpha)-\theta \mu 
  \nonumber \\
\le & \mu \Big( \left( \frac{2(1+\theta)^2}{(1-\theta)^3}n^2+\frac{n(1+\theta)}{(1-\theta)}+
  \frac{\theta(1+\theta)}{(1-\theta)} \right) \sin^4(\alpha)
+4\sqrt{2}\frac{(1+\theta)^{\frac{3}{2}}}{(1-\theta)^2}n^{\frac{3}{2}}\sin^3(\alpha)
\nonumber \\
& +\frac{\theta(1+\theta)}{(1-\theta)} \sin^2(\alpha)
+\theta \sin(\alpha)-\theta \Big).
  \nonumber 
\end{align}
Since $n \ge 1$ and $\theta >0$, substituting $\sin(\alpha)=\frac{\theta}{\sqrt{n}}$ gives
\begin{align}
q\Big(\frac{\theta}{\sqrt{n}} \Big) \le 
& \mu \Big( \left( \frac{2(1+\theta)^2}{(1-\theta)^3}n^2
   +\frac{n(1+\theta)}{(1-\theta)}+
  \frac{\theta(1+\theta)}{(1-\theta)} \right) \frac{\theta^4}{n^2}
  +4\sqrt{2}\frac{(1+\theta)^{\frac{3}{2}}n^{\frac{3}{2}}}{(1-\theta)^2}
  \frac{\theta^3}{n^{\frac{3}{2}}}
  \nonumber \\
  & +\frac{\theta(1+\theta)}{(1-\theta)} \frac{\theta^2}{n}
  +\theta \frac{\theta}{\sqrt{n}}-\theta \Big) 
  \nonumber \\
= & \theta\mu\Big( \frac{2\theta^3(1+\theta)^2}{(1-\theta)^3}
  +\frac{ \theta^3(1+\theta)}{n(1-\theta)}+\frac{ \theta^4(1+\theta)}{(1-\theta)n^2}
  \nonumber \\
  & +\frac{4\sqrt{2} \theta^2(1+\theta)^{\frac{3}{2}}}{(1-\theta)^2}
  +\frac{ \theta^2(1+\theta)}{n(1-\theta)}+\frac{ \theta}{\sqrt{n}}
  -1 \Big) \nonumber \\
\le & \theta\mu\Big( \frac{2\theta^3(1+\theta)^2}{(1-\theta)^3}
  +\frac{ \theta^3(1+\theta)}{(1-\theta)}+\frac{ \theta^4(1+\theta)}{(1-\theta)}
  \nonumber \\
  & +\frac{4\sqrt{2} \theta^2(1+\theta)^{\frac{3}{2}}}{(1-\theta)^2}
  +\frac{ \theta^2(1+\theta)}{(1-\theta)}+ { \theta}
  -1 \Big)
:=\theta\mu p(\theta).
\end{align}
Since $p(\theta)$ is monotonic increasing function of $\theta \in [0, 1)$,
$p(0)<0$, and it is easy to verify that $p(0.22)<0$, this proves the lemma.
\hfill \qed

\vspace{0.3in}
\noindent
{\it Proof of Lemma \ref{back}:}
\newline
\noindent
Using Lemma \ref{circCom}, we have
\begin{equation}
0 \le \Big\lVert \Delta p \circ \Delta \omega  - \frac{1}{2n}(\Delta p^{\T}\Delta \omega) e \Big\rVert^2
\le  \| \Delta p \circ \Delta \omega  \|^2.
\label{intmd2}
\end{equation}
Pre-multiplying $\Big(P(\alpha)\Omega(\alpha)\Big)^{-\frac{1}{2}}$ on the both sides of
(\ref{pOmega}) yields
\[
D \Delta \omega+D^{-1} \Delta p=\Big(P(\alpha)\Omega(\alpha)\Big)^{-\frac{1}{2}}
\Big(\mu(\alpha) e -P(\alpha)\Omega(\alpha)e\Big).
\]
Let $u=D\Delta \omega$, $v=D^{-1}\Delta p$, from (\ref{newtondir1}), we have
\begin{equation}
u^{\T}v= \Delta p^{\T} \Delta \omega = \Delta y^{\T} \Delta \lambda
+ \Delta z^{\T} \Delta \gamma=\Delta x^{\T} (\Delta \gamma - \Delta \lambda )
= \Delta x^{\T} H \Delta x \ge 0.
\label{dptdw}
\end{equation}
Use Lemma \ref{wright0} and the assumption
of $(x(\alpha), p(\alpha), \omega(\alpha)) \in {\cal N}_2(2\theta)$, we have
\begin{eqnarray} \nonumber 
\Big\lVert \Delta p \circ \Delta \omega \Big\rVert & = & \Big\lVert u \circ v \Big\rVert
\le 2^{-\frac{3}{2}} 
\Big\lVert \Big(P(\alpha)\Omega(\alpha) \Big)^{-\frac{1}{2}}
\Big(\mu(\alpha) e-P(\alpha)\Omega(\alpha)e \Big) \Big\rVert^2  \nonumber \\
& = & 2^{-\frac{3}{2}} \sum_{i=1}^{2n} 
\frac{\left(\mu(\alpha)-p_i(\alpha)\omega_i(\alpha)\right)^2}
{p_i(\alpha)\omega_i(\alpha)}
 \nonumber \\
& \le & 2^{-\frac{3}{2}}  
\frac{\| \mu(\alpha) e-p(\alpha)\circ \omega(\alpha) \|^2}
{\min_i{p_i(\alpha)\omega_i(\alpha)}} \nonumber \\
& \le & 2^{-\frac{3}{2}} \frac{(2\theta)^2\mu(\alpha)^2}
{(1-2\theta)\mu(\alpha)}
=2^{\frac{1}{2}} \frac{\theta^2\mu(\alpha)}{(1-2\theta)}.
\label{intmd3}
\end{eqnarray}
Define $(p^{k+1}(t), \omega^{k+1}(t))=(p(\alpha), \omega(\alpha))
+t(\Delta p,\Delta \omega)$. From (\ref{pOmega}) and (\ref{dualalpha}), we have 
\begin{equation}
p(\alpha)^{\T} \Delta \omega+ \omega(\alpha)^{\T} \Delta p 
= 2n \mu -\sum_{i=1}^{2n} p_i(\alpha) \omega_i(\alpha)=0.
\label{equal0}
\end{equation}
Therefore,
\begin{equation}
{\mu}^{k+1}(t)=\frac{ \Big( p(\alpha)+t\Delta p \Big)^{\T}\Big( \omega(\alpha)+t\Delta \omega \Big)}{2n}
=\frac{ p(\alpha)^{\T}\omega(\alpha)+t^2\Delta p^{\T}\Delta \omega}{2n} 
= \mu(\alpha) + t^2\frac{ \Delta p^{\T}\Delta \omega}{2n}.
\label{ukp1}
\end{equation}
Since 
$\Delta p^{\T}\Delta \omega=\Delta x^{\T}H\Delta x \ge 0$, we conclude that
 ${\mu}^{k+1}(t) \ge \mu(\alpha)$.
Using (\ref{ukp1}), (\ref{pOmega}), (\ref{intmd2}), and (\ref{intmd3}), we have
\begin{eqnarray} \nonumber \\
& & \Big\lVert p^{k+1}(t) \circ \omega^{k+1}(t) - {\mu}^{k+1}(t)e \Big\rVert  \nonumber \\
& = & \Big\lVert  (p(\alpha)+t\Delta p) \circ (\omega(\alpha)+t\Delta \omega) - 
\mu(\alpha) e - \frac{t^2}{2n} \left( \Delta p^{\T} \Delta \omega \right) e \Big\rVert 
\nonumber \\
& = & \Big\lVert  p(\alpha)\circ \omega(\alpha) +t(\omega(\alpha) \circ \Delta p + p(\alpha) \circ \Delta \omega)
 + t^2 \Delta p \circ \Delta \omega - 
\mu(\alpha) e -\frac{t^2}{2n} \left( \Delta p^{\T} \Delta \omega \right) e \Big\rVert  \nonumber \\
& = & \Big\lVert  p(\alpha)\circ \omega(\alpha) +t(\mu(\alpha) e-p(\alpha)\circ \omega(\alpha))
 + t^2 \Delta p \circ \Delta \omega - 
\mu(\alpha) e -\frac{t^2}{2n} \left( \Delta p^{\T} \Delta \omega \right) e \Big\rVert  \nonumber \\
& = & \Big\lVert (1-t)\left( p(\alpha)\circ \omega(\alpha)- \mu(\alpha) e \right)
+ t^2\left( \Delta p \circ \Delta \omega - \frac{1}{2n}\left( \Delta p^{\T} \Delta \omega \right) e
\right) \Big\rVert   \nonumber \\
& \le &  (1-t)(2\theta) \mu(\alpha) + t^2  \frac{2^{\frac{1}{2}}\theta^2}{(1-2\theta)}\mu(\alpha)  
        \nonumber \\
& \le &  \left( (1-t)(2\theta)  + t^2 \frac{2^{\frac{1}{2}}\theta^2}{(1-2\theta)} \right) \mu^{k+1}
:=f(t, \theta)\mu^{k+1}.
\label{theta}
\end{eqnarray}
Therefore, taking $t=1$ gives
$\Big\lVert p^{k+1} \circ \omega^{k+1} - {\mu}^{k+1}e \Big\rVert
\le \frac{2^{\frac{1}{2}}\theta^2}{(1-2\theta)}\mu^{k+1}$. 
It is easy to see that, for $\theta \le 0.29$, 
\[ 
\frac{2^{\frac{1}{2}}\theta^2}{(1-2\theta)} = 0.2832 < \theta.
\]
For $\theta \le 0.29$ and $t \in [0,1]$,
noticing $0 \le f(t, \theta) \le f(t, 0.29) \le 0.58(1-t)+ 0.2832t^2 < 1$,
and using Corollary \ref{uagt1},
we have, for an additional condition $\sin(\alpha) \le \sin^{-1}(\bar{\alpha})$,
\begin{align} \nonumber
p_i^{k+1}(t)\omega_i^{k+1}(t) 
& \ge \left( 1-f(t, \theta) \right)
   \mu^{k+1}(t)  \nonumber \\
& =\left( 1-f(t, \theta) \right) 
\left(\mu(\alpha)+\frac{t^2}{n}\Delta p^{\T}\Delta \omega \right) \nonumber  \\
& \ge \left( 1-f(t, \theta) \right)\mu(\alpha) \nonumber  \\
& >0,
\label{pos2}
\end{align}
Therefore, $(p^{k+1}(t), \omega^{k+1}(t))>0$ for $t \in [0,1]$, 
i.e., $(p^{k+1}, \omega^{k+1})>0$. This finishes the proof.
\hfill \qed

\vspace{0.3in}
\noindent
{\it Proof of Lemma \ref{avg}:}
\newline
\noindent
The first inequality of (\ref{dineq}) follows from (\ref{dptdw}).
Pre-multiplying both sides of (\ref{pOmega}) by $P^{-\frac{1}{2}}(\alpha)\Omega^{-\frac{1}{2}}(\alpha)$
gives
\[
P^{-\frac{1}{2}}(\alpha)\Omega^{\frac{1}{2}}(\alpha) \Delta p 
+P^{\frac{1}{2}}(\alpha)\Omega^{-\frac{1}{2}}(\alpha) \Delta \omega
=P^{-\frac{1}{2}}(\alpha)\Omega^{-\frac{1}{2}}(\alpha)\Big(\mu(\alpha) e 
- P(\alpha)\Omega(\alpha)e\Big).
\]
Let $u=P^{-\frac{1}{2}}(\alpha)\Omega^{\frac{1}{2}}(\alpha) \Delta p$ and 
$v=P^{\frac{1}{2}}(\alpha)\Omega^{-\frac{1}{2}}(\alpha) \Delta \omega$, 
and $w=P^{-\frac{1}{2}}(\alpha)\Omega^{-\frac{1}{2}}(\alpha)\Big(\mu(\alpha) e 
- P(\alpha)\Omega(\alpha)e\Big)$, from (\ref{dptdw}), we have 
$u^{\T} v = \Delta p^{\T} \Delta \omega \ge 0$.
Using Lemma \ref{ineq} and the assumption of 
$(x(\alpha), p(\alpha), \omega(\alpha)) \in {\cal N}_2(2\theta)$, we have
\begin{align}
& \| u\|^2 + \| v \|^2=\sum_{i=1}^{2n} \left( \frac{(\Delta p_i)^2\omega_i(\alpha)}{p_i(\alpha)}
+\frac{(\Delta \omega_i)^2p_i(\alpha)}{\omega_i(\alpha)} \right) \nonumber \\
\le & \| w \|^2 = \sum_{i=1}^{2n} \frac{(\mu(\alpha)-p_i(\alpha)\omega_i(\alpha))^2}
{p_i(\alpha)\omega_i(\alpha)}  \nonumber \\
\le & \frac{\sum_{i=1}^{2n} (\mu(\alpha)-p_i(\alpha)\omega_i(\alpha))^2}
{\min_i{p_i(\alpha)\omega_i(\alpha)}}  \nonumber \\
\le & \frac{(2\theta)^2 \mu^2(\alpha)}{(1-2\theta)\mu(\alpha)}
=  \frac{(2\theta)^2 \mu(\alpha)}{(1-2\theta)}. \nonumber \\
\end{align}
Dividing both sides by $\mu(\alpha)$ and using 
$p_i(\alpha)\omega_i(\alpha) \ge \mu(\alpha) (1-2\theta)$ yields
\begin{align}
& \sum_{i=1}^{2n} (1-2\theta)\left( \frac{(\Delta p_i)^2}{p_i^2(\alpha)}
+\frac{(\Delta \omega_i)^2}{\omega_i^2(\alpha)} \right) \nonumber \\
= & (1-2\theta) \left( \Big\lVert \frac{\Delta p }{p(\alpha) } \Big\rVert^2
+\Big\lVert \frac{\Delta \omega }{\omega(\alpha) } \Big\rVert^2 \right)
 \nonumber \\
\le & \frac{(2\theta)^2}{(1-2\theta)}, \nonumber \\
\end{align}
i.e.,
\begin{align}
\Big\lVert \frac{\Delta p }{p(\alpha) } \Big\rVert^2
+\Big\lVert \frac{\Delta \omega }{\omega(\alpha) } \Big\rVert^2 \le 
\left( \frac{2\theta}{1-2\theta} \right)^2.
\end{align}
Invoking Lemma \ref{simple}, we have
\begin{align}
\Big\lVert \frac{\Delta p }{p(\alpha) } \Big\rVert^2
\cdot\Big\lVert \frac{\Delta \omega }{\omega(\alpha) } \Big\rVert^2 \le 
\frac{1}{4} \left( \frac{2\theta}{1-2\theta}\right)^4.
\end{align}
This gives
\begin{align}
\Big\lVert \frac{\Delta p }{p(\alpha) } \Big\rVert
\cdot\Big\lVert \frac{\Delta \omega }{\omega(\alpha) } \Big\rVert \le 
\frac{2\theta^2}{(1-2\theta)^2}.
\end{align}
Using Cauchy–-Schwarz inequality, we have
\begin{align}
& \frac{(\Delta p)^{\T}(\Delta \omega)}{\mu(\alpha)} \nonumber \\
\le & \sum_{i=1}^{2n} \frac{|\Delta p_i||\Delta \omega_i|}{\mu(\alpha)} \nonumber \\
\le & (1+2\theta)\sum_{i=1}^{2n} \frac{|\Delta p_i|}{p_i(\alpha)}
\frac{|\Delta \omega_i|}{\omega_i(\alpha)} \nonumber \\
= & (1+2\theta)\Big\lvert \frac{\Delta p}{p(\alpha)} \Big\rvert^{\T}
\Big\lvert \frac{\Delta \omega}{\omega(\alpha)} \Big\rvert \nonumber \\
\le & (1+2\theta) \Big\lVert \frac{\Delta p }{p(\alpha) } \Big\rVert
\cdot\Big\lVert \frac{\Delta \omega }{\omega(\alpha) } \Big\rVert  \nonumber \\
\le & \frac{2\theta^2(1+2\theta)}{(1-2\theta)^2}.
\end{align}
Therefore,
\begin{align}
\frac{(\Delta p)^{\T}(\Delta \omega)}{2n}
\le \frac{\theta^2(1+2\theta)}{n(1-2\theta)^2}\mu(\alpha).
\end{align}
This proves the lemma.
\hfill \qed

\vspace{0.3in}
\noindent
{\it Proof of Lemma \ref{ImproveMu}:}
\newline
\noindent
Using Lemmas \ref{muk1Inq}, \ref{main2}, \ref{sincos}, \ref{positive},
\ref{size}, and \ref{restSize}, and noticing $\ddot{p}^{\T}\ddot{\omega} \ge 0$ and 
$\dot{p}^{\T}\dot{\omega} \ge 0$, we have 
\begin{subequations} 
\begin{align}
 & \mu^{k+1} \le \mu(\alpha) \left( 1+\frac{\theta^2(1+2\theta)}{n(1-2\theta)^2} \right)
 =\mu(\alpha) \left(1+\frac{\delta_0}{n}\right)
   \label{a} \\
= &  \mu^k \left(1-\sin({\alpha})+
      \left(\frac{\ddot{p}^{\T}\ddot{\omega}}{2n\mu}-
      \frac{\dot{p}^{\T}\dot{\omega}}{2n\mu} \right)(1-\cos(\alpha))^2
      -\left(\frac{\dot{p}^{\T}\ddot{\omega}}{2n\mu}+
      \frac{\dot{\omega}^{\T}\ddot{p}}{2n\mu} \right)\sin(\alpha)(1-\cos(\alpha)) \right)
   \left(1+\frac{\delta_0}{n}\right) \nonumber \\
\le  &  \mu^k \left(1-\sin({\alpha})+
   \frac{\ddot{p}^{\T}\ddot{\omega}}{2n\mu}\sin^4(\alpha) 
   +\left( \left| \frac{\dot{p}^{\T}\ddot{\omega}}{2n\mu} \right| +
   \left|  \frac{\dot{\omega}^{\T}\ddot{p}}{2n\mu} \right| \right)\sin^3(\alpha) \right)
   \left(1+\frac{\delta_0}{n}\right) \nonumber \\
 \le & \mu^k \left( 1-\sin(\alpha)+\frac{n(1+\theta)^2}{(1-\theta)^3}\sin^4(\alpha)
+\frac{2(2n)^{\frac{1}{2}}(1+\theta)^{\frac{3}{2}}}{(1-\theta)^2}\sin^3(\alpha) \right)
\left(1+\frac{\delta_0}{n}\right)   \label{b} 
\end{align}
\end{subequations}
Substituting $\sin(\alpha) =\frac{\theta}{\sqrt{n}}$ into (\ref{b}) gives
\begin{align}
\mu^{k+1}
  \le & \mu^{k} \left( 1-\frac{\theta}{\sqrt{n}}+\frac{n(1+\theta)^2}{(1-\theta)^3}\frac{\theta^4}{{n}^2}
+\frac{2(2n)^{\frac{1}{2}}(1+\theta)^{\frac{3}{2}}}{(1-\theta)^2}\frac{\theta^3}{{n}^{\frac{3}{2}}} \right)
\left(1+\frac{\delta_0}{n}\right)
\nonumber \\
=& \mu^{k} \left( 1-\frac{\theta}{\sqrt{n}}+\frac{\theta^4(1+\theta)^2}{n(1-\theta)^3}
+\frac{2^{\frac{3}{2}}\theta^3(1+\theta)^{\frac{3}{2}}}{n(1-\theta)^2} \right)
\left(1+\frac{\delta_0}{n}\right)
\nonumber \\
=& \mu^{k} \left( 1-\frac{\theta}{\sqrt{n}}+\frac{\delta_0}{n}
+\frac{\theta^4(1+\theta)^2}{n(1-\theta)^3}
+\frac{2^{\frac{3}{2}}\theta^3(1+\theta)^{\frac{3}{2}}}{n(1-\theta)^2} 
-\frac{\theta \delta_0}{n^{\frac{3}{2}}}
 +\frac{\delta_0}{n}\left[ \frac{\theta^4(1+\theta)^2}{n(1-\theta)^3}
+\frac{2^{\frac{3}{2}}\theta^3(1+\theta)^{\frac{3}{2}}}{n(1-\theta)^2} \right] 
\right)
\nonumber \\
=& \mu^{k} \left( 1-\frac{\theta}{\sqrt{n}} \left[ 1-\frac{\delta_0}{\sqrt{n} \theta} 
-\frac{\theta^3(1+\theta)^2}{\sqrt{n}(1-\theta)^3}
-\frac{2^{\frac{3}{2}}\theta^2(1+\theta)^{\frac{3}{2}}}{\sqrt{n}(1-\theta)^2} 
\right]
-\frac{\theta \delta_0}{n^{\frac{3}{2}}} \left[ 
 1- \frac{\theta^3(1+\theta)^2}{\sqrt{n}(1-\theta)^3}
-\frac{2^{\frac{3}{2}}\theta^2(1+\theta)^{\frac{3}{2}}}{\sqrt{n}(1-\theta)^2} \right] 
\right) \nonumber
\end{align}
Since 
\[ 1- \frac{\theta^3(1+\theta)^2}{\sqrt{n}(1-\theta)^3}
-\frac{2^{\frac{3}{2}}\theta^2(1+\theta)^{\frac{3}{2}}}{\sqrt{n}(1-\theta)^2}
\ge 1- \frac{\theta^3(1+\theta)^2}{(1-\theta)^3}
-\frac{2^{\frac{3}{2}}\theta^2(1+\theta)^{\frac{3}{2}}}{(1-\theta)^2} := f(\theta),
\]
where $f(\theta)$ is a monotonic decreasing function of $\theta$, and for $\theta \le 0.37$, $f(\theta)>0$. Therefore, 
for $\theta \le 0.37$, 
\begin{align}
\mu^{k+1}
  \le & \mu^{k} \left( 1-\frac{\theta}{\sqrt{n}} \left[ 1-\frac{\delta_0}{\sqrt{n} \theta} 
-\frac{\theta^3(1+\theta)^2}{\sqrt{n}(1-\theta)^3}
-\frac{2^{\frac{3}{2}}\theta^2(1+\theta)^{\frac{3}{2}}}{\sqrt{n}(1-\theta)^2} 
\right] \right) \nonumber \\
=& \mu^{k} \left( 1-\frac{\theta}{\sqrt{n}} \left[ 1-\frac{\theta (1+2\theta)}{\sqrt{n}(1-2\theta)^2}
-\frac{\theta^3(1+\theta)^2}{\sqrt{n}(1-\theta)^3}
-\frac{2^{\frac{3}{2}}\theta^2(1+\theta)^{\frac{3}{2}}}{\sqrt{n}(1-\theta)^2} 
\right] \right)
\end{align}
Since 
\[
1-\frac{\theta (1+2\theta)}{\sqrt{n}(1-2\theta)^2}
-\frac{\theta^3(1+\theta)^2}{\sqrt{n}(1-\theta)^3}
-\frac{2^{\frac{3}{2}}\theta^2(1+\theta)^{\frac{3}{2}}}{\sqrt{n}(1-\theta)^2}
\ge
1-\frac{\theta (1+2\theta)}{(1-2\theta)^2}
-\frac{\theta^3(1+\theta)^2}{(1-\theta)^3}
-\frac{2^{\frac{3}{2}}\theta^2(1+\theta)^{\frac{3}{2}}}{(1-\theta)^2}:=g(\theta),
\]
where $g(\theta)$ is a monotonic decreasing function of $\theta$, and for $\theta \le 0.19$,
$g(\theta) > 0.0976>0$. For $\theta = 0.19$, $\theta g(\theta)>0.0185$ and
\[
\mu^{k+1} \le \mu^{k} \left( 1-\frac{0.0185}{\sqrt{n}} \right).
\]
This proves (\ref{uk1leuk}).
\hfill \qed


\begin{thebibliography}{99}
\bibitem{ab09} {A. Alessio and A. Bemporad}, {A survey of explicit model predictive control}, 
{eds. L. Magni and D. M. Raimondo and F. Allgower},
{\it Nonlinear model precidive control: toward new challenging applications},
{Springer-Verlag}, {Berlin}, {345-369}, 2009.
\bibitem{bmdp02} {Alberto Bemporad and Manfred Morari and Vivek Dua and Efstratios Pistikopoulos}, 
{The explicit linear quadratic regulator for constrained systems}, 
{\it Automatica}, {Vol. 38}, {3-20}, 2002.
\bibitem{brtt97} {A. B. Berkelaar and K. Roos and T. Terlaky}, 
{The optimal set and optimal partition approach to linear and quadratic programming}, 
{eds. H. Greenberg and T. Gal},
{\it Recent Advances in Sensitivity Analysis and Parametric Programming},
{Kluwer Publishers}, {Berlin}, 1997.
\bibitem{bertsekas82} {Dimitri P. Bertsekas}, 
{Projected {N}ewton methods for optimization problems with simple constraints}, 
{\it SIAM J. Control and Optimization}, {Vol. 20}, pp.{221-246}, 1982.
\bibitem{cg06} {Coralia Cartis and Nicholas I. M. Gould}, 
{Finding a point in the relative interior of a polyhedren}, 
{Technical Report, Computing Laboratory, Oxford University}, 
{Oxford, UK}, 2007.
\bibitem{carmo76} {M. P. do Carmo}, 
{Differential Geometry of Curves and Surfaces}, 
{Prentice-Hall}, {New Jersey}, 1976.
\bibitem{gt56} {A.J. Goldman and A.W. Tucker}, 
{Theory of linear programming}, 
{eds. H.W. Kuhn and Tucker},
{\it Linear Equalities and Related Systems},
{Princeton University Press}, {Princeton}, pp. 53-97, 1956.
\bibitem{gu93} {O. Guler and Y. Ye}, 
{Convergence behavior of interior-point algorithms}, 
{\it Mathematical Programming}, {Vol. 60}, pp. {215-228}, 1993.
\bibitem{hpy90} {C. G. Han and P. Pardalos and Yinyu Ye}, 
{Computational aspects of an interior point algorithm for quadratic programming
problem with box constraints}, 
{eds. T.F. Coleman and Y. Li},
{\it Large-Scale Numerical Optimization},
{SIAM Publications}, {Philadelphia, PA}, pp. 92-112, 1990.
\bibitem{evans94} {Don Herbison-Evans}, 
{Solving Quartics and Cubics for Graphics}, 
{Technical Report, Basser Department of Computer Science, University of Sydney}, 
{Sydney, Australia}, 1994.
\bibitem{hl01} {Tingshu Hu and Zongli Lin},
{\it Control systems with actuator saturation: analysis and design},
{Birkhauser}, {Boston}, 2001.
\bibitem{Mehrotra92} {S. Mehrotra}, 
{On the implementation of a primal-dual interior point method}, 
{\it SIAM Journal on Optimization}, {Vol. 2}, pp. {575-601}, 1992.
\bibitem{mty93} {S. Mizuno and M. Todd and Y. Ye}, 
{On adaptive step primal-dual interior-point algorithms for linear programming}, 
{\it Mathematics of Operations Research}, {Vol. 18}, pp. {964-981}, 1993.
\bibitem{ma89a} {R. Monteiro and I. Adler}, 
{Interior path following primal-dual algorithms. {P}art {I}: linear programming}, 
{\it Mathematical Programming}, {Vol. 44}, pp. {27-41}, 1989.
\bibitem{poly07} {Andrei D. Polyanin and Alexander V. Manzhirov},
{\it Handbook of Mathematics For Engineers and Scientists},
{Chapman \& Hall/CRC}, {Noca Raton, FL}, 2007.
\bibitem{qin03} {S.  J. Qin and T. A. Badgwell}, 
{A survey of industrial model predictive control technology}, 
{\it Control Engineering Practice}, {Vol. 11}, pp. {733-764}, 2003.
\bibitem{rwr98} {C. V. Rao and S. J. Wright and J. B. Rawling}, 
{Application of interior-point methods to model predictive control}, 
{\it Journal of Optimization Theory and Applications}, {Vol. 99}, pp. {723-757}, 1998.
\bibitem{ssa98} {P. M. Stoltz and S. Sivapiragasam and T. Anthony}, 
{Satellite orbit-raising using {LQR} control with fixed thrusters}, 
{\it Advances in the Astronautical Sciences}, {Vol. 98}, pp. {109-120}, 1998.
\bibitem{wb10} {Y. Wang and S. Boyd}, 
{Fast Model Predictive Control Using Online Optimization}, 
{\it IEEE Transactions on control systems technology}, {Vol. 18}, pp. {267-278}, 2010.
\bibitem{wright97} {S. Wright},
{\it Primal-Dual Interior-Point Methods},
{SIAM}, {Philadelphia}, 1997.
\bibitem{wright93} {S. J. Wright}, 
{Interior point methods for optimal control of discrete time systems}, 
{\it Journal of Optimization Theory and Applications}, {Vol. 77}, pp. {161-187}, 1993.
\bibitem{yang09} {Y. Yang}, 
{Arc-Search Path-Following Interior-Point Algorithms for Linear Programming}, 
{\it Optimization Online}, {August}, 2009.
\bibitem{yang10} {Y. Yang}, 
{A Polynomial Arc-Search Interior-Point Algorithm for Linear Programming}, 
{\it Optimization Online}, {November}, 2010.
\bibitem{yang11a} {Y. Yang}, 
{A Polynomial Arc-Search Interior-Point Algorithm for Convex Qudratic Programming}, 
{\it European Journal of Operational Research}, {Vol. 215}, pp. {25-38}, 2011.
\bibitem{zt11} {Luca Zaccarian and Andrew R. Teel}, 
{\it Modern Anti-windup Synthesis: Control augmentation for actuator saturation}, 
{Princeton University Press}, {Princeton}, 2011.
\bibitem{zhang96} {Y. Zhang}, 
{Solving Large-Scale Linear Programs by Interior-Point Methods Under 
the MATLAB Environment}, 
{Technical Report, Department of Mathematics and Statistics, University of Maryland, 
Baltimore County}, {Marland}, 1996.
\end{thebibliography}

\end{document}